\theoremstyle{plain}
\newtheorem{theorem}{Theorem}[section]
\newtheorem{proposition}[theorem]{Proposition}
\newtheorem{lemma}[theorem]{Lemma}
\newtheorem{corollary}[theorem]{Corollary}
\newtheorem{conjecture}[theorem]{Conjecture}
\theoremstyle{definition}
\newtheorem{example}{Example}[subsection]
\theoremstyle{remark}
\newtheorem*{remark}{Remark}
\renewcommand {\tilde} {\widetilde}
\renewcommand{\Re}{\mathrm{Re\,}}
\begin{document}

\title[Computation of analytic capacity]{Computation of analytic capacity and applications to the subadditivity problem }

\date{February 19, 2013}

\author[M. Younsi]{Malik Younsi}
\thanks{First author supported by the Vanier Canada Graduate Scholarships program.}
\address{D\'epartement de math\'ematiques et de statistique, Pavillon Alexandre--Vachon, $1045$ av. de la M\'edecine, Universit\'e Laval, Qu\'ebec (Qu\'ebec), Canada, G1V 0A6.}
\email{malik.younsi.1@ulaval.ca}

\author[T. Ransford]{Thomas Ransford}
\thanks{Second author supported by grants from NSERC and the Canada Research Chairs program.}
\address{D\'epartement de math\'ematiques et de statistique, Pavillon Alexandre--Vachon, $1045$ av. de la M\'edecine, Universit\'e Laval, Qu\'ebec (Qu\'ebec), Canada, G1V 0A6.}
\email{ransford@mat.ulaval.ca}

\keywords{Analytic capacity, Garabedian function, Szeg\H o kernel, Hardy space, Smirnov class, subadditivity }
\subjclass[2010]{primary 30C85; secondary 30C40, 65E05}

\begin{abstract}
We develop a least-squares method for computing the analytic capacity of compact plane sets with piecewise-analytic boundary. The method furnishes rigorous upper and lower bounds which converge to the true value of the capacity. Several illustrative examples are presented. We are led to formulate a conjecture which, if true, would imply that analytic capacity is subadditive. The conjecture is proved in a special case.
\end{abstract}

\maketitle

\section{Introduction}
\label{sec1}
Let $K$ be a compact subset of $\mathbb{C}$ and let $\Omega$ be the complement of $K$ in the Riemann sphere, i.e. $\Omega:= \mathbb{C}_{\infty}
\setminus K$. The \textit{analytic capacity} of $K$ is
$$
\gamma(K):= \sup\{|f'(\infty)|: f \in H^{\infty}(\Omega), \|f\|_{\infty} \leq 1\}.
$$
Here $f'(\infty):=\lim_{z \rightarrow \infty} z(f(z)-f(\infty))$ denotes the coefficient of $z^{-1}$ in the Laurent expansion of $f(z)$ near infinity, and $H^{\infty}(\Omega)$ denotes the class of all bounded holomorphic functions in $\Omega$.

Analytic capacity of compact sets was first introduced by Ahlfors \textbf{\cite{AHL}}, in order to study Painlev\'e's problem of finding a geometric characterization of the compact sets $K$ that have the property that every bounded holomorphic function in $\Omega$ is constant. These compact sets are called \textit{removable} and are precisely those of zero analytic capacity. Painlev\'e's problem has been extensively studied in the last decades and is now considered solved. See e.g. \textbf{\cite{TOL2}} for a survey of Painlev\'e's problem and related results.

The study of analytic capacity became even more interesting with Vitushkin's work on uniform rational approximation \textbf{\cite{VIT}}. Vitushkin showed that analytic capacity plays a central role in the theory of uniform rational approximation of holomorphic functions on compact subsets of the plane. See e.g. \textbf{\cite{ZALC}} for a survey of the applications of analytic capacity to this type of problem.

We also mention that analytic capacity is used in fluid dynamics to study the $2$-dimensional velocity fields induced by several obstacles, see e.g. \textbf{\cite{MUR}}.

In this article, we are primarily interested in the computation of analytic capacity. More precisely, our main objectives are
\begin{itemize}
\item To obtain a quick and efficient method to compute the analytic capacity of ``nice'' compact sets.
\item To use this method to investigate the (still open) \textit{subadditivity problem} for analytic capacity.
\end{itemize}

The article is structured as follows. Section \ref{sec2} contains the necessary preliminaries on analytic capacity. In Section \ref{sec3}, we obtain some estimates for the analytic capacity of a compact set with $C^{\infty}$ boundary. Then, in Section \ref{sec4}, we prove that the same estimates remain valid in the case of compact sets with piecewise-analytic boundary (subject to suitable modifications). The proof relies on properties of the \textit{Smirnov classes} $E^p(\Omega)$ on finitely connected domains. In Section \ref{sec5}, we use these estimates to obtain a numerical method for the computation of analytic capacity. The method gives upper and lower bounds for the analytic capacity $\gamma(K)$, and we prove that these bounds can be made arbitrarily close, thus converging to the true value of $\gamma(K)$. In Section \ref{sec6}, we present several numerical examples, in the case of analytic boundary as well as in the case of piecewise-analytic boundary.

The last two sections, Section \ref{sec7} and  Section \ref{sec8}, are dedicated to the study of the subadditivity problem for analytic capacity. The problem is the following:
\emph{Is it true that $\gamma$ is subadditive, in the sense that
$$
\gamma(E \cup F) \leq \gamma(E) + \gamma(F)
$$
for all compact sets $E,F \subseteq \mathbb{C}$ ?}
Vitushkin conjectured in \textbf{\cite{VIT}} that analytic capacity is \textit{semi-additive}, i.e.
$$
\gamma(E \cup F) \leq C(\gamma(E) + \gamma(F))
$$
for some universal constant $C$. He gave various applications of this inequality to rational approximation. Semi-additivity of analytic capacity was proved by Tolsa \textbf{\cite{TOL}}. In fact, Tolsa proved that analytic capacity is countably semi-additive. However, it is still unknown whether or not one can take the constant $C$ equal to $1$. Davie \textbf{\cite{DAV}} gives some applications of the subadditivity of analytic capacity to rational approximation theory.

In Section \ref{sec7}, we first use a discrete approach to analytic capacity (introduced by Melnikov \textbf{\cite{MEL}}) to prove that the subadditivity of analytic capacity is equivalent to the subadditivity in the special case where the compact sets $E,F$ are disjoint finite unions of disjoint disks, all with the same radius. This result is quite convenient because the numerical method described in Section \ref{sec5} is very efficient for computing the analytic capacity of such compact sets. Then we use a discrete version of analytic capacity, also introduced by Melnikov \textbf{\cite{MEL}}, to obtain a result regarding the behavior of the ratio
$$
\frac{\gamma(E \cup F)}{\gamma(E) + \gamma(F)}
$$
as $r \rightarrow 0$, where $E$ and $F$ are disjoint finite unions of disjoint disks, all with radius~$r$.

Finally, in Section \ref{sec8}, we formulate a conjecture based, among other things, on numerical evidence. A proof of this conjecture would imply that analytic capacity is subadditive. We end the section by giving a proof in a special case.

In the article, we shall use the letter $\Omega$ to denote a domain in the Riemann sphere, that is, an open and connected subset of $\mathbb{C}_\infty$. Furthermore, when $\Omega$ is said to be a finitely connected domain with analytic (respectively piecewise-analytic) boundary, we mean that the boundary of $\Omega$ consists of a finite number of pairwise disjoint analytic (respectively piecewise-analytic) Jordan curves. Finally, we shall use $A(\Omega)$ to denote the set of complex-valued functions that are continuous in $\overline{\Omega}$, the closure of $\Omega$ in $\mathbb{C}_\infty$, and holomorphic in $\Omega$.

\section{Preliminaries on analytic capacity}
\label{sec2}

In a sense, analytic capacity measures the size of a set as a non-removable singularity for bounded holomorphic functions. A direct consequence of the definition is that $\gamma$ is \textit{monotone}:
$$
K_1 \subseteq K_2 \Rightarrow \gamma(K_1) \leq \gamma(K_2).
$$
It is also easy to prove that
$$
\gamma(aK +b) = |a|\gamma(K)
$$
for every $a,b \in \mathbb{C}$ and compact set $K$. In particular, $\gamma$ is invariant under translation.

Analytic capacity is also \textit{outer regular}, in the sense that, if
$$
K_1 \supseteq K_2 \supseteq K_3 \supseteq \dots
$$
is a decreasing sequence of compact sets, and if $K:= \cap_n K_n$, then
$\gamma(K_n) \rightarrow \gamma(K)$
as $n \rightarrow \infty$.

It is well known that, for every compact set $K$, there exists an extremal function $f$ for $\gamma(K)$, that is, a function $f$ holomorphic in $\Omega$ with $|f| \leq 1$ in $\Omega$ and $f'(\infty)=\gamma(K)$. In the case $\gamma(K)>0$, this function $f$ is unique in the unbounded component of $\Omega$ and is called the \textit{Ahlfors function} for $K$. One verifies easily that the Ahlfors function $f$ vanishes at $\infty$.

From Schwarz's lemma, it follows that, if $K$ is connected, then $f$ is the conformal map of $\Omega$ onto the unit disk $\mathbb{D}$ with $f(\infty)=0$ and $f'(\infty)>0$. As a consequence, we get that the analytic capacity of a closed disk equals the radius, and the analytic capacity of a closed line segment equals a quarter of the length. See e.g. \cite{GAR}.

\subsection{Finitely connected domains with analytic boundary}
Let $K$ be a compact set in the plane and again denote by $\Omega$ the complement of $K$ in $\mathbb{C}_{\infty}$. The following theorem says that, under certain assumptions on $K$, the Ahlfors function behaves nicely:

\begin{theorem}[Ahlfors \cite{AHL}]
\label{Theo1}
Let us assume that $\Omega$ is a finitely connected domain whose boundary consists of $n$ Jordan curves. In this case, the Ahlfors function $f$ is an $n$-to-$1$ branched covering of $\Omega$ onto the unit disk. Moreover,
\begin{enumerate}[\rm(i)]
\item $f$ extends continuously to $\partial \Omega$, the boundary of $\Omega$, so that $f \in A(\Omega)$;
\item $|f| \equiv 1$ on $\partial \Omega$;
\item $f$ maps each of the $n$ boundary curves homeomorphically onto the unit circle.
\end{enumerate}
\end{theorem}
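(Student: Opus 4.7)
The plan is to reduce the general Jordan-curve case to the analytic-boundary case via conformal uniformization to a circle domain.

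By Koebe's circle-domain theorem, there exists a conformal map $\phi:\Omega\to\Omega^{\ast}$ onto a domain $\Omega^{\ast}\subset\mathbb{C}_{\infty}$ whose boundary consists of $n$ disjoint round circles, and we may normalize so that $\phi(\infty)=\infty$, giving the expansion $\phi(z)=az+b+O(1/z)$ near infinity for some $a\neq 0$. Since $\Omega^{\ast}$ has analytic boundary, the analytic-boundary case of the theorem (provable separately by Schwarz reflection across each boundary circle, for instance) produces the Ahlfors function $g$ of $K^{\ast}:=\mathbb{C}_{\infty}\setminus\Omega^{\ast}$ with all the conclusions (i)--(iii) and the $n$-to-$1$ branched-covering property. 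The next step is to invoke Carath\'eodory's boundary-extension theorem in its multiply connected version, obtained by local application of the simply connected version near each Jordan component: $\phi$ extends to a homeomorphism $\overline{\Omega}\to\overline{\Omega^{\ast}}$ carrying each Jordan component $\Gamma_{j}\subset\partial\Omega$ homeomorphically onto a circle $\Gamma_{j}^{\ast}\subset\partial\Omega^{\ast}$.

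Next I identify $f$ with $g\circ\phi$ up to a unimodular constant. For any $u\in H^{\infty}(\Omega)$ with $\|u\|_{\infty}\leq 1$, the function $u\circ\phi^{-1}$ lies in $H^{\infty}(\Omega^{\ast})$ with the same bound, and a routine calculation with Laurent expansions at infinity gives $(u\circ\phi^{-1})'(\infty)=a\,u'(\infty)$. This sets up a bijection between the two extremal problems, showing $\gamma(K^{\ast})=|a|\,\gamma(K)$ and exhibiting $g\circ\phi$ as an extremal competitor for $\gamma(K)$. Because $K$ contains non-degenerate Jordan domains one has $\gamma(K)>0$, so the uniqueness of the Ahlfors function recalled in Section~\ref{sec2} forces $f=c\,(g\circ\phi)$ for some unimodular constant $c$.

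From this representation all four conclusions follow directly. Continuity of $f$ on $\overline{\Omega}$ is the composition of continuous extensions of $g$ and $\phi$, giving $f\in A(\Omega)$ and establishing (i). On $\partial\Omega$ we have $|f|=|g|\circ\phi=1$, since $\phi$ maps $\partial\Omega$ onto $\partial\Omega^{\ast}$ where $|g|\equiv 1$, which gives (ii). Each $\Gamma_{j}$ is mapped homeomorphically by $\phi$ onto the circle $\Gamma_{j}^{\ast}$, which in turn is mapped homeomorphically by $g$ onto the unit circle, so $f$ maps $\Gamma_{j}$ homeomorphically onto $\partial\mathbb{D}$, establishing (iii). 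Finally, since $\phi$ is a conformal bijection and $g$ is an $n$-to-$1$ branched covering onto $\mathbb{D}$, their composition $f$ is also an $n$-to-$1$ branched covering of $\Omega$ onto $\mathbb{D}$. The main obstacle is the boundary extension of $\phi$ in the multiply connected setting; this reduces, by purely local arguments near each individual boundary component (Jordan curves on both sides), to the classical Carath\'eodory extension theorem for conformal maps of simply connected Jordan domains, and so is standard rather than genuinely new. A secondary technical point is keeping careful track of the normalization of $\phi$ at $\infty$, which governs how $f'(\infty)$ relates to $g'(\infty)$ and ensures that the correspondence between extremal problems is exact.
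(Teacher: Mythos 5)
The paper does not actually prove this theorem: it is quoted from Ahlfors \cite{AHL}, and the text only remarks that one route is via the dual extremal problem solved by the Garabedian function (Theorem \ref{Theo2}), i.e.\ Garabedian's duality. Your reduction of the general Jordan-curve case to the circle-domain case is correct and is essentially the content of Proposition \ref{prop1} and the surrounding discussion in Section 2.2 (composition of Riemann maps, Carath\'eodory extension component by component, transformation of the extremal problem under a conformal map fixing $\infty$, and uniqueness of the Ahlfors function since $\gamma(K)>0$). That part is fine.

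The genuine gap is the base case, which is where all the substance of the theorem lives and which you dispose of in a parenthesis. For a circle domain $\Omega^{\ast}$ you assert that conclusions (i)--(iii) and the $n$-to-$1$ covering property are ``provable separately by Schwarz reflection across each boundary circle.'' This is circular: Schwarz reflection lets you continue $g$ analytically across a boundary circle \emph{once you already know} that $|g|\equiv 1$ there, but it cannot be used to establish that the extremal function of the sup-norm problem has unimodular boundary values in the first place. A priori the extremal $g$ is merely a bounded holomorphic function with $\|g\|_{\infty}\leq 1$; showing $|g^{\ast}|=1$ a.e.\ on $\partial\Omega^{\ast}$, that $g$ extends continuously, and that $g$ has winding number exactly $1$ on each boundary circle (hence is an $n$-to-$1$ branched covering by the argument principle) requires either Ahlfors' original variational construction or the duality argument with the Garabedian function $\psi$, where the identity $f'(\infty)=\int_{\partial\Omega}f\psi\,d\zeta\leq\int_{\partial\Omega}|\psi|\,|d\zeta|$ with equality forces $f\psi\,d\zeta=|\psi|\,|d\zeta|$ and hence $|f|\equiv 1$ on the boundary. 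Without supplying one of these arguments, the proposal proves only that the theorem for arbitrary Jordan boundaries follows from the theorem for analytic boundaries, not the theorem itself.
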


By the Schwarz reflection principle, if in addition each boundary curve is analytic, then $f$ extends analytically across the boundary.

One way to prove Theorem \ref{Theo1} is to use the following result:

\begin{theorem}
\label{Theo2}
Suppose that $\Omega$ is a finitely connected domain with analytic boundary. Then there exists a holomorphic function $\psi$ in $\Omega$ which is the unique solution to the dual extremal problem
$$
\int_{\partial \Omega}|\psi(\zeta)||d\zeta|
= \inf \left\{\int_{\partial \Omega} |h(\zeta)||d\zeta|: h \in A(\Omega), h(\infty)=\frac{1}{2\pi i} \right\}.
$$
Moreover, $\psi$ has the following properties:
\begin{enumerate}[\rm(i)]
 \item $\psi \in A(\Omega)$ and extends analytically across $\partial \Omega$;
 \item $\psi(\infty) = 1/2 \pi i$;
 \item $\psi$ represents evaluation of the derivative at $\infty$, in the sense that, for all $g \in A(\Omega)$,
$$
g'(\infty) = \int_{\partial \Omega} g(\zeta)\psi(\zeta)d\zeta;
$$
 \item $\int_{\partial \Omega} |\psi(\zeta)| |d\zeta| = \gamma(K)$;
\item The extension of $\psi$ has an analytic logarithm. In particular, there exists a function $q \in A(\Omega)$ such that $q(\infty)=1$ and
$$
q(z)^2 = 2 \pi i \psi(z) \qquad (z \in \overline{\Omega}).
$$
\end{enumerate}
\end{theorem}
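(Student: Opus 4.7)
The plan is to realize $\psi$ as the solution of an $L^{1}$-minimization dual to the one defining $\gamma(K)$, and then to extract its fine properties from Hardy-space theory on finitely connected domains with analytic boundary.

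First I would establish existence and uniqueness of a minimizer. Let $\mathcal{A} := \{h \in A(\Omega) : h(\infty) = 1/(2\pi i)\}$ and take a minimizing sequence $(h_n) \subset \mathcal{A}$ for $\|h\|_{1} := \int_{\partial \Omega}|h|\,|d\zeta|$. A uniform $L^{1}$-bound on $\partial \Omega$ translates, via Cauchy's integral formula, into a uniform bound on each compact subset of $\Omega$; so $(h_n)$ has a subsequence converging locally uniformly to some $\psi$ holomorphic in $\Omega$ with $\psi(\infty) = 1/(2\pi i)$. Lower semicontinuity of the $L^{1}$-norm in the Smirnov class $E^{1}(\Omega)$ makes $\psi$ a minimizer, and strict convexity of $\|\cdot\|_{1}$ applied to the midpoint of two minimizers yields uniqueness. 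The analyticity of $\partial \Omega$ then allows Schwarz-type reflection across each boundary curve to upgrade $\psi$ to a function holomorphic on an open neighborhood of $\overline \Omega$, giving (i) and (ii).

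Property (iii) is a Cauchy residue computation: for $g \in A(\Omega)$, the product $g\psi$ lies in $A(\Omega)$, and with $\partial \Omega$ oriented so that $\int_{\partial \Omega} F\,d\zeta = 2\pi i F'(\infty)$ for all $F \in A(\Omega)$, expanding $g\psi$ at infinity and using both $\psi(\infty) = 1/(2\pi i)$ and the vanishing of the next Laurent coefficient $\psi'(\infty) = 0$ (a consequence of extremality tested against constant perturbations) yields the stated identity. Property (iv) is then two-sided duality. Applied to the Ahlfors function $f$ (which lies in $A(\Omega)$ by Theorem \ref{Theo1}), property (iii) gives $\gamma(K) = |f'(\infty)| = \left|\int_{\partial \Omega} f\psi\,d\zeta\right| \le \|\psi\|_{1}$. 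For the reverse inequality, the Euler--Lagrange condition at the minimizer $\psi$, namely $\int_{\partial \Omega}\overline{(\psi/|\psi|)}\,\eta\,|d\zeta| = 0$ for all $\eta \in A(\Omega)$ with $\eta(\infty) = 0$, shows that $\overline{\psi/|\psi|}$ is the boundary value of some $f_{0} \in H^{\infty}(\Omega)$ with $\|f_{0}\|_{\infty} = 1$ and $f_{0}(\infty) = 0$; the resulting boundary identity $f_{0}(\zeta)\psi(\zeta)\,d\zeta = |\psi(\zeta)|\,|d\zeta|$ on $\partial \Omega$, together with (iii), gives $f_{0}'(\infty) = \|\psi\|_{1}$, so $\gamma(K) = \|\psi\|_{1}$ and $f_{0} = f$ by uniqueness of the Ahlfors function.

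The subtle part is (v). The boundary identity $f(\zeta)\psi(\zeta)\,d\zeta = |\psi(\zeta)|\,|d\zeta|$ says $\arg(f\psi T) \equiv 0 \pmod{2\pi}$ on $\partial \Omega$, where $T = d\zeta/|d\zeta|$ is the unit tangent. Tracking winding numbers around each boundary component $\partial \Omega_{j}$ — using Theorem \ref{Theo1} for the winding of $\arg f$, and elementary topology for $\arg T$ — the contributions of $f$ and $T$ cancel, so $\arg \psi$ has zero period along each $\partial \Omega_{j}$. An application of the argument principle on $\Omega \cap \{|z|<R\}$ for large $R$, combined with $\psi(\infty) = 1/(2\pi i) \ne 0$, then shows $\psi$ is zero-free in $\Omega$; a further argument using the Ahlfors structure excludes zeros on $\partial \Omega$. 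Single-valuedness of $\arg \psi$ together with non-vanishing on $\overline \Omega$ yields an analytic $\log \psi$ on a neighborhood of $\overline \Omega$, and hence an analytic square root $q = \exp(\tfrac{1}{2}\log(2\pi i\psi))$ normalized by $q(\infty) = 1$. I expect this winding-number accounting, together with the verification that $\psi$ does not vanish on $\overline \Omega$, to be the main technical obstacle.
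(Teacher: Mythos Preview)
The paper does not give its own proof of this theorem; it attributes the result to Garabedian \cite{GARA} and refers the reader to \cite[Theorem~4.1]{GAR}. Your outline follows the classical duality route found in those references, so in spirit you are reconstructing the cited argument rather than proposing a genuinely different one.

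That said, two steps in your sketch do not go through as written. First, the $L^{1}$ norm is \emph{not} strictly convex, so the midpoint argument cannot by itself yield uniqueness of the minimizer. What one actually uses is the equality case of the triangle inequality together with analyticity: if $\psi_1,\psi_2$ are both minimizers then so is $(\psi_1+\psi_2)/2$, equality in $\|\psi_1+\psi_2\|_1=\|\psi_1\|_1+\|\psi_2\|_1$ forces $\psi_1/\psi_2>0$ a.e.\ on $\partial\Omega$, and a holomorphic function on $\Omega$ that is real on $\partial\Omega$ and equals $1$ at $\infty$ is identically $1$. Second, your derivation of (iii) rests on $\psi'(\infty)=0$, which you justify by ``extremality tested against constant perturbations''; but nonzero constants do not vanish at $\infty$ and hence are not admissible perturbations of $\psi$ within $\mathcal{A}$, so it is unclear what variation you have in mind. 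The vanishing of $\psi'(\infty)$ is true, but it comes out of the boundary identity $f\psi\,d\zeta=|\psi|\,|d\zeta|$ (equivalently, from the Szeg\H{o}-kernel description of $q$), not from a bare first-variation computation. A smaller point: invoking Schwarz reflection to extend $\psi$ analytically across $\partial\Omega$ presupposes that the boundary values of $\psi$ lie on a fixed analytic arc, which you do not yet know; in the classical treatment the analytic extension is obtained \emph{after} establishing $2\pi i\psi=q^{2}$ with $q$ a constant multiple of the Szeg\H{o} kernel, whose extension across an analytic boundary is known.
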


The above is essentially due to Garabedian \cite{GARA}. The function $\psi$ is usually called the \textit{Garabedian function} for $\Omega$. See also \textbf{\cite[Theorem 4.1]{GAR}}.

We end this subsection by remarking that the function $q$ in the above theorem is, up to a multiplicative constant, a reproducing kernel for the Hilbert space $H^2(\Omega)$.

Indeed, recall that, for any domain $\Omega$ and for $0<p<\infty$, the Hardy space $H^p(\Omega)$ is the class of all functions $h$ holomorphic in $\Omega$ such that the subharmonic function $|h|^p$ has a harmonic majorant. This definition is conformally invariant and coincides with the classical one when $\Omega$ is the unit disk. If $\Omega$ is a finitely connected domain with analytic boundary and if $\infty \in \Omega$, then $H^2(\Omega)$ is a Hilbert space, with respect to the scalar product
$$
\langle g,h \rangle = \int_{\partial \Omega} g(z)\overline{h(z)} |dz|,
$$
in which evaluation at $\infty$ is continuous. Hence there is a unique function $S(z,\infty) \in H^2(\Omega)$, called the \textit{Szeg\H o kernel function} for $\infty$, such that
$$
g(\infty) = \int_{\partial \Omega} g(z)\overline{S(z,\infty)}|dz| \qquad (g \in H^2(\Omega)).
$$

If $q$ is the function in $(v)$ of the above theorem, then we have
$$
q(z)= 2\pi \gamma(K) S(z,\infty)
\qquad (z \in \Omega).
$$
See e.g. \cite[Theorem 4.3]{GAR}.

\subsection{Transformation of the Ahlfors and Garabedian functions under conformal mapping}

The goal of this subsection is to describe how the Ahlfors and Garabedian functions transform under conformal mapping.

As before, let $K$ be a compact set in the plane, and set $\Omega:= \mathbb{C}_{\infty} \setminus K$. Suppose that $\Omega$ is a finitely connected domain whose boundary consists of a finite number of pairwise disjoint Jordan curves.

By repeated applications of the Riemann mapping theorem, there exists another compact set $\tilde{K}$ whose complement $\tilde{\Omega}$ is a finitely connected domain with analytic boundary and is conformally equivalent to $\Omega$. Denote by $F: \Omega \rightarrow \tilde{\Omega}$ the conformal map thereby obtained, normalized so that $F(\infty)=\infty$.

It is well known that every conformal map of a Jordan domain onto the unit disk extends to a homeomorphism of the closure of the domain onto the closed unit disk. Hence, by construction, $F$ extends to an homeomorphism of $\overline{\Omega}$ onto $\overline{\tilde{\Omega}}$. Write
$$
F(z)=a_1z+a_0+\frac{a_{-1}}{z}+\frac{a_{-2}}{z^2}+ \dots
$$
near infinity. The following proposition relates the Ahlfors functions for $K$ and $\tilde{K}$:

\begin{proposition}
\label{prop1}
Let $f, \tilde{f}$ be the Ahlfors functions for $K$ and $\tilde{K}$ respectively. Then $\gamma(K)=\gamma(\tilde{K})/|a_1|$ and the following diagram commutes, up to a multiplicative constant of modulus $1$:
\begin{diagram}
\Omega &\rTo^{F} &\tilde{\Omega}\\
 &\rdTo_{f} &\dTo_{\tilde{f}}\\
 & &\mathbb{C}
\end{diagram}
\end{proposition}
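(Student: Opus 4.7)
The plan is to reduce the proposition to the uniqueness of the Ahlfors function by pulling back $\tilde{f}$ through $F$. Define $g := \tilde{f}\circ F$. Since $F$ is a conformal map of $\Omega$ onto $\tilde{\Omega}$ with $F(\infty)=\infty$, the function $g$ is holomorphic on $\Omega$ with $|g|\leq 1$ throughout, and $g(\infty)=\tilde{f}(\infty)=0$. So $g$ is a legitimate candidate in the extremal problem defining $\gamma(K)$.

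The next step is to compute $g'(\infty)$. Since $\tilde{f}(\infty)=0$ and $\tilde{f}'(\infty)=\gamma(\tilde{K})$, we have $\tilde{f}(w)=\gamma(\tilde{K})/w+O(1/w^2)$ near $\infty$. Substituting the Laurent expansion $F(z)=a_1 z + a_0 + a_{-1}/z + \cdots$ gives
$$
g(z)=\tilde{f}(F(z))=\frac{\gamma(\tilde{K})}{a_1 z}+O\!\left(\frac{1}{z^{2}}\right)\qquad(z\to\infty),
$$
so that $g'(\infty)=\gamma(\tilde{K})/a_1$, and in particular $|g'(\infty)|=\gamma(\tilde{K})/|a_1|$. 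The definition of analytic capacity then yields
$$
\gamma(\tilde{K})/|a_1| \leq \gamma(K).
$$

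To obtain the reverse inequality, I would apply the same argument with the roles of $\Omega$ and $\tilde{\Omega}$ interchanged, using $F^{-1}:\tilde{\Omega}\to\Omega$. Writing $F^{-1}(w)=b_1 w + b_0 + b_{-1}/w + \cdots$ and comparing with $F\circ F^{-1}=\mathrm{id}$ forces $b_1=1/a_1$, hence $|b_1|=1/|a_1|$. The analogous computation gives $\gamma(K)/|b_1|\leq\gamma(\tilde{K})$, i.e.\ $|a_1|\gamma(K)\leq \gamma(\tilde{K})$. Combining the two inequalities yields the required identity $\gamma(K)=\gamma(\tilde{K})/|a_1|$.

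For the commutativity claim, note that $g$ is now seen to be an extremal function for $\gamma(K)$, in the sense that $|g|\leq 1$ on $\Omega$ and $|g'(\infty)|=\gamma(K)$. Writing $g'(\infty)=e^{i\theta}\gamma(K)$, the function $e^{-i\theta}g$ satisfies all the normalizations of the Ahlfors function $f$, and uniqueness (valid since $\Omega$ is a finitely connected domain in $\mathbb{C}_\infty$, hence connected) gives $e^{-i\theta}g=f$. This is exactly the assertion that $\tilde{f}\circ F = e^{i\theta}f$, i.e.\ the diagram commutes up to the unimodular constant $e^{i\theta}$. There is no genuine obstacle in the argument; the only care required is in the Laurent-expansion step and in invoking uniqueness of the Ahlfors function in the correct form (uniqueness on the unbounded component, which here is all of $\Omega$).
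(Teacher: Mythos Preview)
Your proof is correct and follows essentially the same approach as the paper: pull back $\tilde{f}$ via $F$, compute the derivative at infinity to get one inequality, reverse the roles via $F^{-1}$ for the other, and then invoke uniqueness of the Ahlfors function to conclude $\tilde{f}\circ F=\lambda f$ with $|\lambda|=1$. The only differences are cosmetic: you write out the Laurent-expansion computation and the leading coefficient $b_1=1/a_1$ of $F^{-1}$ explicitly, whereas the paper states these more tersely.
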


\begin{proof}
Note that $\tilde{f} \circ F \in H^{\infty}(\Omega)$ and $\|\tilde{f} \circ F\|_{\infty} \leq 1$. Thus,
$$
|(\tilde{f} \circ F)'(\infty)| \leq \gamma(K).
$$
However, we have
$$
(\tilde{f} \circ F)'(\infty)=\frac{\tilde{f}'(\infty)}{a_{1}}=\frac{\gamma(\tilde{K})}{a_{1}}
$$
and so
$$
\frac{\gamma(\tilde{K})}{|a_{1}|} \leq \gamma(K).
$$
Repeating this with $f \circ F^{-1}$ instead gives the reverse inequality.

By uniqueness of the Ahlfors function, we have $\tilde{f} \circ F = \lambda f$ for some constant $\lambda$ with $|\lambda|=1$.
\end{proof}

For the transformation of the Garabedian function $\psi$, we need additional assumptions on the boundary of $\Omega$. The reason behind this will be clear soon.

Accordingly, we shall assume that $F$ is $C^{\infty}$ on the boundary. This will be the case, for example, if all the boundary curves of $\Omega$ are $C^{\infty}$. This is a consequence of the following result, which dates back to Painlev\'e's doctoral thesis:

\begin{theorem}[Painlev\'e]
Let $D$ be a bounded Jordan domain with $C^{\infty}$ boundary, and let $f$ be a conformal mapping of $D$ onto the unit disk $\mathbb{D}$. Then $f$ is $C^{\infty}$ on $\overline{D}$, the derivative $f'$ does not vanish on $\overline{D}$, and $f^{-1}$ is $C^{\infty}$ on the closed unit disk.
\end{theorem}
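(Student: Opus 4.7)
The plan is to deduce this statement from two classical results: Carath\'eodory's theorem on boundary extension of conformal maps, and Kellogg's theorem on boundary regularity of the Riemann map.

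First, since $D$ is a bounded Jordan domain, Carath\'eodory's theorem guarantees that any conformal map $f : D \to \mathbb{D}$ extends to a homeomorphism $\overline{D} \to \overline{\mathbb{D}}$. This settles the topological part of the statement and, applied in the other direction, also yields the continuous extension of $f^{-1}$ to the closed unit disk.

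Next, I would invoke Kellogg's theorem: if $\partial D$ is of class $C^{k,\alpha}$ for some integer $k \geq 1$ and some $\alpha \in (0,1)$, then $f \in C^{k,\alpha}(\overline{D})$ and $f^{-1} \in C^{k,\alpha}(\overline{\mathbb{D}})$. Since $\partial D$ is $C^{\infty}$, it is in particular $C^{k,\alpha}$ for every integer $k \geq 1$ and every $\alpha \in (0,1)$; hence $f$ and $f^{-1}$ are of class $C^{k}$ up to the boundary for every $k$, i.e., $f \in C^{\infty}(\overline{D})$ and $f^{-1} \in C^{\infty}(\overline{\mathbb{D}})$. The non-vanishing of $f'$ on $\overline{D}$ then falls out by differentiating $f^{-1}\circ f = \mathrm{id}$: the chain rule gives $(f^{-1})'(f(z))\, f'(z) = 1$ for every $z \in \overline{D}$, forcing $f'(z)\neq 0$ everywhere on $\overline{D}$.

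The genuine obstacle in this strategy is Kellogg's theorem itself, which I would quote from the literature rather than reprove. A self-contained proof typically proceeds by flattening the boundary locally via a $C^{\infty}$ diffeomorphism, applying a Schwarz-reflection-type argument to extend $f$ across the straightened boundary, and then bootstrapping the regularity using the Cauchy--Riemann equations (for instance via a $\overline{\partial}$-argument combined with Sobolev embedding). This smoothness bootstrap is the technical core of the result; once it is in hand, the remaining assertions are short consequences of Carath\'eodory's theorem and the chain rule.
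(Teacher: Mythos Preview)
Your proposal is correct. The paper itself does not give a proof of this theorem: it simply refers the reader to \cite[Theorem 8.2]{BELL}. Your outline via Carath\'eodory's extension theorem together with the Kellogg--Warschawski boundary regularity theorem is a standard route to the result, and your derivation of the non-vanishing of $f'$ from the chain rule applied to $f^{-1}\circ f=\mathrm{id}$ is the natural argument. Bell's treatment in \cite{BELL} proceeds somewhat differently, working through the Kerzman--Stein approach to the Szeg\H o and Bergman kernels, but the Kellogg--Warschawski route you describe is equally legitimate and is the one found in most classical references on boundary behavior of conformal maps (e.g.\ Pommerenke \cite{POM}). Since the paper only cites the result, there is nothing further to compare.
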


\begin{proof}
For a proof, see e.g. \cite[Theorem 8.2]{BELL}.
\end{proof}

We shall also need the fact that $F'$ has an holomorphic square root in $\Omega$. This is a consequence of the following:

\begin{theorem}[Bell]
\label{sqrt}
Let $f: \Omega_1 \rightarrow \Omega_2$ be a conformal mapping between bounded finitely connected domains with $C^{\infty}$ boundaries. Then $f$ is $C^{\infty}$ on $\overline{\Omega}_1$ and $f'$ is nonvanishing on $\overline{\Omega}_1$. Consequently, $f^{-1}$ is $C^{\infty}$ on $\overline{\Omega}_2$. Furthermore, $f'$ is equal to the square of a function $C^{\infty}$ on $\overline{\Omega}_1$ and holomorphic in $\Omega_1$.
\end{theorem}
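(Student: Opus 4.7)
The plan is to prove the three conclusions of Bell's theorem in turn: $C^\infty$ extension of $f$ and $f^{-1}$, nonvanishing of $f'$ on the closure, and the existence of a holomorphic square root of $f'$.

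The first step is to show that $f$ extends to a homeomorphism of closures sending each boundary Jordan curve of $\Omega_1$ to a boundary Jordan curve of $\Omega_2$; this is a standard consequence of Carath\'eodory's prime end theorem applied component by component. To upgrade this to $C^\infty$ regularity, the question is local near each boundary point. The obstacle is that any local $C^\infty$ change of coordinates that straightens $\partial\Omega_1$ destroys holomorphicity, so Painlev\'e's theorem (stated just above) does not apply directly after reduction to a half-plane. The standard resolution, going back to Bell and Ligocka, is to invoke ``Condition R'' for the Bergman projection on finitely connected $C^\infty$ domains, combined with the Bergman kernel transformation formula $K_{\Omega_1}(z,w) = f'(z)\,\overline{f'(w)}\,K_{\Omega_2}(f(z),f(w))$: Condition R guarantees that the Bergman kernels extend $C^\infty$ to the boundary off the diagonal, and the transformation formula then forces $f'$ itself to extend $C^\infty$ to $\overline{\Omega}_1$. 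An alternative route is to pull back the Cauchy--Riemann system under a $C^\infty$ boundary-straightening diffeomorphism and appeal to elliptic regularity up to the boundary.

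Given the $C^\infty$ extension, the nonvanishing of $f'$ on $\overline{\Omega}_1$ is essentially automatic. Since $f$ restricts to a $C^\infty$ diffeomorphism between corresponding boundary curves, if $\gamma$ is a regular $C^\infty$ parametrization of a boundary component of $\Omega_1$, then $(f\circ\gamma)' = (f'\circ\gamma)\,\gamma'$ is a regular parametrization of the image curve, forcing $f'\circ\gamma$ to be nowhere zero. Combined with conformality inside $\Omega_1$, $f'$ does not vanish anywhere on $\overline{\Omega}_1$, and hence $f^{-1}$ is also $C^\infty$ on $\overline{\Omega}_2$ by the inverse function theorem.

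For the existence of a holomorphic square root, I would show that the closed holomorphic $1$-form $(f''/f')\,dz$ has zero period around each boundary curve of $\Omega_1$. If $\gamma$ is a boundary curve oriented with $\Omega_1$ on its left, the tangent $\gamma'$ has total turning $\pm 2\pi$ by the rotation index of a smooth simple loop, and so does $(f\circ\gamma)' = (f'\circ\gamma)\,\gamma'$, with the same sign because $f$ preserves orientation. Reading off arguments shows that $\arg(f'\circ\gamma)$ returns to its initial value around $\gamma$, so the period of $(f''/f')\,dz$ along $\gamma$ vanishes. Since the boundary cycles generate $H_1(\Omega_1)$, $\log f'$ admits a single-valued holomorphic branch on $\Omega_1$, and $\exp\bigl(\tfrac12 \log f'\bigr)$ provides the required square root, which is $C^\infty$ on $\overline{\Omega}_1$ by the first two steps. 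The main obstacle is clearly the boundary regularity in the first step: analytic boundary would yield to Schwarz reflection at once, while merely $C^\infty$ boundary requires either the Bergman-kernel machinery or a nontrivial PDE argument, and in practice one would simply cite Bell's original paper rather than reproving the regularity here.
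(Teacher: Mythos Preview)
The paper does not actually prove this theorem: its entire proof is the single line ``See \cite[Theorem 12.1]{BELL}.'' So there is no detailed argument in the paper to compare against; your sketch goes well beyond what the paper supplies. Your outline of the boundary-regularity step via the Bergman kernel transformation formula and Condition~R is indeed the Bell--Ligocka machinery underlying the cited reference, and the nonvanishing of $f'$ on the closure follows as you say.

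There is, however, a genuine error in your square-root argument. You claim that the rotation indices of $\gamma$ and $f\circ\gamma$ agree ``because $f$ preserves orientation,'' and conclude that the winding number of $f'$ along $\gamma$ is zero, so that $\log f'$ is single-valued. This is false in general. Take $\Omega_1=\Omega_2=\{1<|z|<2\}$ and $f(z)=2/z$. With $\gamma(t)=2e^{it}$ (the outer circle, rotation index $+1$), one has $f(\gamma(t))=e^{-it}$ (the inner circle traversed clockwise, rotation index $-1$), and a direct computation gives $f'(\gamma(t))=-\tfrac12 e^{-2it}$, whose argument decreases by $4\pi$. Thus the winding number of $f'$ along $\gamma$ is $-2$, not $0$, and indeed $f'(z)=-2/z^2$ has no single-valued logarithm on the annulus. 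The point is that orientation-preservation guarantees $\Omega_2$ lies on the left of $f\circ\gamma$, but the rotation index of a Jordan curve is $+1$ or $-1$ according to whether its \emph{bounded} complementary component lies on the left; when $f$ sends an outer boundary component to an inner one (or vice versa), the signs flip.

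The fix is immediate: since both rotation indices lie in $\{+1,-1\}$, their difference is always even, so the winding number of $f'$ along each boundary cycle is in $2\mathbb{Z}$. Hence the periods of $(f''/f')\,dz$ lie in $4\pi i\,\mathbb{Z}$, which is exactly what is needed for $\exp\bigl(\tfrac12\int (f''/f')\,dz\bigr)$ to be single-valued. So your strategy survives, but the intermediate claim that $\log f'$ is globally defined must be dropped.
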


\begin{proof}
See \cite[Theorem 12.1]{BELL}.
\end{proof}

We can now prove:

\begin{theorem}
\label{propp}
Suppose that $\Omega$ is a finitely connected domain with $C^{\infty}$ boundary, and let $\tilde{\Omega}$ be as in the above.
Let $\tilde{\psi}$ be the Garabedian function for $\tilde{\Omega}$, as in Theorem \ref{Theo2}. Define a function $\psi$ in $\Omega$ by
$$
\psi:= \frac{1}{a_{1}}(\tilde{\psi} \circ F)F'.
$$
Then $\psi$ has the following properties:
\begin{enumerate}[\rm(i)]
 \item $\psi \in A(\Omega)$;
 \item $\psi(\infty) = 1/2 \pi i$;
 \item  $\psi$ represents evaluation of the derivative at $\infty$, in the sense that, for all $g \in A(\Omega)$,
$$g'(\infty) = \int_{\partial \Omega} g(\zeta)\psi(\zeta)d\zeta;$$
 \item $\int_{\partial \Omega} |\psi(\zeta)| |d\zeta| = \gamma(K)$;
\item  $\psi$ has an analytic square root in $\Omega$. More precisely, there exists a function $q \in A(\Omega)$ such that $q(\infty)=1$ and
$$
q(z)^2 = 2 \pi i \psi(z)
\qquad (z \in \overline{\Omega}).
$$
\end{enumerate}
\end{theorem}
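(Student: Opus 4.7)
The plan is to verify the five properties one by one, leveraging the corresponding properties of $\tilde\psi$ from Theorem \ref{Theo2} together with the fact that (by Bell's Theorem \ref{sqrt} and Painlev\'e's theorem) the conformal map $F$ is $C^\infty$ up to $\partial\Omega$, $F'$ is non-vanishing on $\overline\Omega$, and $F'$ has an $A(\Omega)$-square root.

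For (i), since $\tilde\psi$ extends analytically across $\partial\tilde\Omega$ and $F$ extends as a $C^\infty$ (hence continuous) map with values in a neighborhood of $\overline{\tilde\Omega}$, the composition $\tilde\psi\circ F$ lies in $A(\Omega)$, as does $F'$, so $\psi\in A(\Omega)$. For (ii), expand $F(z)=a_1 z + a_0 + O(1/z)$ near $\infty$, so $F'(\infty)=a_1$, and $\tilde\psi(\infty)=1/(2\pi i)$ gives $\psi(\infty)=1/(2\pi i)$.

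For (iii), given $g\in A(\Omega)$, set $h:=g\circ F^{-1}\in A(\tilde\Omega)$. Using $F^{-1}(w)=w/a_1+O(1)$ at $\infty$, one checks directly from the Laurent expansion that $h'(\infty)=a_1 g'(\infty)$. Applying property (iii) of Theorem \ref{Theo2} to $h$ and performing the change of variables $w=F(\zeta)$, $dw=F'(\zeta)\,d\zeta$ in the resulting boundary integral (orientation is preserved because $F$ fixes $\infty$ and extends to a homeomorphism of the closures) yields
$$
a_1 g'(\infty)=\int_{\partial\tilde\Omega} h(w)\tilde\psi(w)\,dw=\int_{\partial\Omega} g(\zeta)\,\tilde\psi(F(\zeta))F'(\zeta)\,d\zeta=a_1\int_{\partial\Omega} g(\zeta)\psi(\zeta)\,d\zeta.
$$
Dividing by $a_1$ gives (iii). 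Property (iv) is proved by the analogous change of variables on the arc-length integral: $|F'(\zeta)|\,|d\zeta|=|dw|$ on $\partial\Omega$, so
$$
\int_{\partial\Omega}|\psi(\zeta)|\,|d\zeta|=\frac{1}{|a_1|}\int_{\partial\tilde\Omega}|\tilde\psi(w)|\,|dw|=\frac{\gamma(\tilde K)}{|a_1|}=\gamma(K),
$$
the last equality being Proposition \ref{prop1}.

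For (v), Theorem \ref{sqrt} provides $s\in A(\Omega)$ holomorphic in $\Omega$ with $s^2=F'$; similarly Theorem \ref{Theo2}(v) gives $\tilde q\in A(\tilde\Omega)$ with $\tilde q(\infty)=1$ and $\tilde q^2=2\pi i\,\tilde\psi$. Choose the branch of $\sqrt{a_1}$ to equal $s(\infty)$ (a square root of $F'(\infty)=a_1$) and set $q:=\tilde q(F)\,s/\sqrt{a_1}$. Then $q\in A(\Omega)$, $q(\infty)=1\cdot s(\infty)/\sqrt{a_1}=1$, and
$$
q^2=\frac{(\tilde q\circ F)^2\, s^2}{a_1}=\frac{(2\pi i)(\tilde\psi\circ F)F'}{a_1}=2\pi i\,\psi.
$$

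The only genuinely delicate step is the change-of-variables argument in (iii) and (iv): one must be sure $F$ sends $\partial\Omega$ to $\partial\tilde\Omega$ bijectively and orientation-preservingly, and this is exactly what Bell's theorem (together with $F(\infty)=\infty$) supplies. Everything else is essentially a bookkeeping exercise in the factor $a_1$ and the choice of a branch of its square root.
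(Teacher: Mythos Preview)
Your proof is correct and follows essentially the same approach as the paper: both verify (i)--(v) by transporting the corresponding properties of $\tilde\psi$ via the change of variables $w=F(\zeta)$, using Painlev\'e/Bell for the boundary regularity and the square root of $F'$, and Proposition~\ref{prop1} for the identity $\gamma(\tilde K)/|a_1|=\gamma(K)$. Your treatment of (v) is slightly more explicit (you actually write down $q=(\tilde q\circ F)\,s/\sqrt{a_1}$), but this is exactly what the paper's one-line appeal to Theorem~\ref{Theo2}(v) and Theorem~\ref{sqrt} unpacks to.
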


\begin{proof}
(i) is clear. Indeed, $\psi$ is even $C^{\infty}$ on $\overline{\Omega}$.

To prove (ii), note that
$$
F'(z)=a_{1}-\frac{a_{-1}}{z^2}-\frac{2a_{-2}}{z^3}-\dots
$$
near $\infty$, so that $F'(z) \rightarrow a_{1}$ as $z \rightarrow \infty$. Since $F$ fixes $\infty$, we have
$$
(\tilde{\psi} \circ F)(\infty)=\tilde{\psi}(\infty)=\frac{1}{2\pi i}
$$
and (ii) follows.

To prove (iii), we use the change-of-variables formula as found in e.g. \cite[Theorem 7.26]{RUD}. Let $g \in A(\Omega)$. Then $g \circ F^{-1} \in A(\tilde{\Omega})$ so that, by Theorem \ref{Theo2},
\begin{align*}
(g \circ F^{-1})'(\infty)
&=  \int_{\partial \tilde{\Omega}} g(F^{-1}(w))\tilde{\psi}(w)dw \\
&=  \int_{\partial \Omega} g(z) \tilde{\psi}(F(z))F'(z)dz \\
&= a_{1}\int_{\partial \Omega} g(z) \psi(z)dz.
\end{align*}
Here we used the change of variable $w=F(z)$, which is legitimate since $F$ is injective and $C^{\infty}$ on $\partial \Omega$.
Now write
$$
g(z) = g(\infty) + \frac{g'(\infty)}{z}+ \frac{b_2}{z^2}+ \dots
$$
near $\infty$. Then
$$
(g \circ F^{-1})'(\infty)
= \lim_{w \rightarrow \infty} w(g(F^{-1}(w))-g(\infty))
= a_{1}g'(\infty)
$$
and (iii) follows.

To prove (iv), note that
\begin{align*}
\int_{\partial \Omega} |\psi(z)||dz|
&=  \frac{1}{|a_{1}|}\int_{\partial \Omega} |\tilde{\psi}(F(z))||F'(z)dz| \\
&=  \frac{1}{|a_{1}|}\int_{\partial \tilde{\Omega}} |\tilde{\psi}(w)||dw| \\
&= \frac{\gamma(\tilde{K})}{|a_{1}|} = \gamma(K),
\end{align*}
by Proposition \ref{prop1}.

Finally, (v) follows directly from Theorem \ref{Theo2} and the remark preceding Theorem \ref{sqrt}.
\end{proof}

Adopting the terminology already used in the case where $\Omega$ had analytic boundary,
from now on we shall call $\psi$ the Garabedian function for $\Omega$.

\begin{remark} The Garabedian function was
studied by Garnett \cite{GAR} and Havinson \cite{HAV}, who both raised the question of whether
the Garabedian functions of a decreasing sequence of compact sets with analytic
boundaries must converge. This question was answered in the affirmative by Smith
\cite{SMIT} and also by Suita \cite{SUI2}. This fact leads to a natural definition of the Garabedian
function for an arbitrary compact plane set, though we shall not need this degree of generality here.
\end{remark}

\section{Estimates for analytic capacity in the case of \protect{$C^{\infty}$} boundary}
\label{sec3}
In this section, we obtain some estimates for the analytic capacity of a compact set whose complement is a finitely connected domain with $C^{\infty}$ boundary. First, we need a lemma:

\begin{lemma}
\label{cauchy1}
Let $K$ be a compact set in the plane whose complement $\Omega$ is a finitely connected domain with $C^{\infty}$ boundary. If $f \in A(\Omega)$, then
$$
f'(\infty) = \frac{1}{2 \pi i} \int_{\partial \Omega} f(\zeta) d\zeta.
$$
\end{lemma}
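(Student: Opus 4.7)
The plan is to reduce the integral over $\partial\Omega$ to an integral over a large circle via Cauchy's theorem. Since $\infty \in \Omega$, the function $f$ is holomorphic at $\infty$ and admits a Laurent expansion
$$
f(z) = f(\infty) + \frac{f'(\infty)}{z} + \frac{b_2}{z^2} + \cdots
$$
which converges uniformly on $\{|z|=R\}$ once $R$ is chosen large enough that $K \subset \{|z|<R\}$. Termwise integration on this counterclockwise circle annihilates every term except the $f'(\infty)/z$ contribution and yields $\int_{|z|=R} f(z)\, dz = 2\pi i\, f'(\infty)$.

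Next I would apply Cauchy's theorem to the bounded domain $\Omega_R := \Omega \cap \{|z| < R\}$. Its boundary consists of $\{|z|=R\}$ together with the Jordan curves making up $\partial\Omega$; orienting $\partial\Omega$ so that each of its components is traversed counterclockwise around the enclosed component of $K$ (equivalently, with $\Omega$ on the right), the positively oriented boundary of $\Omega_R$ consists of $\{|z|=R\}$ taken counterclockwise together with $\partial\Omega$ traversed in the opposite sense. Cauchy's theorem applied to $\Omega_R$ then reads
$$
\int_{|z|=R} f(z)\, dz - \int_{\partial\Omega} f(z)\, dz = 0,
$$
and combining this with the previous identity gives the desired equality $f'(\infty) = \frac{1}{2\pi i}\int_{\partial\Omega} f(\zeta)\, d\zeta$.

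The one subtle point is that $f \in A(\Omega)$ is merely continuous up to $\partial\Omega$ rather than holomorphic across it, so Cauchy's theorem does not apply to $\Omega_R$ verbatim. One handles this in the standard way: shrink $\Omega_R$ slightly inward along the normal to obtain smooth subdomains $\Omega_R^{(n)}$ on whose closures $f$ is holomorphic, apply the classical theorem on each, and pass to the limit using the uniform continuity of $f$ on $\overline{\Omega_R}$. The $C^\infty$ boundary hypothesis is precisely what guarantees the existence of such an inward normal exhaustion, and this is the main---though quite minor---technical point in the argument.
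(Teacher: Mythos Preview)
Your argument is correct and follows the same overall strategy as the paper: reduce $\int_{\partial\Omega} f$ to $\int_{|z|=R} f$ for a large circle, then read off $f'(\infty)$ from the Laurent expansion. The only difference lies in how the equality of the two contour integrals is justified. You handle the fact that $f$ is merely continuous up to $\partial\Omega$ by an inward normal exhaustion of $\Omega_R$ and a uniform-continuity limit, which is perfectly valid given the $C^\infty$ boundary. The paper instead invokes Mergelyan's theorem: functions holomorphic in a neighborhood of $\overline{\Omega}$ are uniformly dense in $A(\Omega)$, the identity is immediate for such functions, and it passes to the uniform limit. Your route is more elementary and self-contained (no appeal to Mergelyan), while the paper's is a one-line reduction to a standard approximation theorem; neither has a real advantage here, though the Mergelyan approach also works without change when the boundary is less regular, as long as $A(\Omega)=R(\overline{\Omega})$.
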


\begin{proof}
Write
$$
f(z) = f(\infty) + \frac{f'(\infty)}{z} + \frac{a_2}{z^2} + \dots
$$
near $\infty$. Let $C$ be a circle centered at the origin and containing $K$,  with radius sufficiently large so that the above expression for $f$ holds on $C$. Then we have
$$
\frac{1}{2\pi i} \int_{\partial \Omega} f(\zeta) d\zeta = \frac{1}{2 \pi i}\int_{C} f(\zeta) d\zeta.
$$
Indeed, the above is clear if $f$ is holomorphic in a neighborhood of $\overline{\Omega}$, and such functions are uniformly dense in $A(\Omega)$, by Mergelyan's theorem.

On the other hand, the right-hand side in the last expression is just $f'(\infty)$. This can be seen by substituting the power series expression for $f$ into the integral and integrating term by term.
\end{proof}

Now we can prove:
\begin{theorem}
\label{est1}
Let $K$ be a compact set in the plane, and suppose that the complement $\Omega$ of $K$ is a finitely connected domain with $C^{\infty}$ boundary. Then
\begin{equation}
\label{estime1}
\gamma(K) = \min \left\{ \frac{1}{2\pi}\int_{\partial \Omega} |g(z)|^2|dz|: g \in A(\Omega), \, g(\infty)=1 \right\}
\end{equation}
and
\begin{equation}
\label{estime2}
\gamma(K) = \max \left\{ 2\Re h'(\infty) - \frac{1}{2\pi}\int_{\partial \Omega} |h(z)|^2 |dz|: h \in A(\Omega), \, h(\infty)=0 \right\}.
\end{equation}
Here the minimum and maximum are attained respectively by the functions $g=q$ and $h=fq$, where $f$ is the Ahlfors function for $K$ and $q$ is the function of Theorem \ref{propp}.
\end{theorem}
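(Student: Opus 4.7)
The plan is to reduce both formulas to completions of squares in $L^{2}(\partial\Omega,|dz|)$ by first establishing two ``reproducing-kernel'' boundary identities that stem from the Ahlfors--Garabedian duality. The central preliminary is to show that $f\psi\,d\zeta = |\psi|\,|d\zeta|$ on $\partial\Omega$. By Theorem \ref{Theo1}, $f\in A(\Omega)$ with $|f|\leq 1$ on $\overline{\Omega}$ and $|f|=1$ on $\partial\Omega$; applying Theorem \ref{propp}(iii) to $g=f$ gives $\int_{\partial\Omega} f\psi\,d\zeta = f'(\infty) = \gamma(K)$, while Theorem \ref{propp}(iv) gives $\int_{\partial\Omega}|\psi|\,|d\zeta| = \gamma(K)$. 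The chain
\begin{equation*}
\gamma(K) = \Bigl|\int_{\partial\Omega} f\psi\,d\zeta\Bigr| \leq \int_{\partial\Omega}|f\psi|\,|d\zeta| \leq \int_{\partial\Omega}|\psi|\,|d\zeta| = \gamma(K)
\end{equation*}
forces equality throughout, and since $\int f\psi\,d\zeta = \gamma(K)$ is real positive, the equality cases of the triangle inequality and of $|f|\leq 1$ yield $f\psi\,d\zeta = |\psi|\,|d\zeta|$ on $\partial\Omega$. Combined with $q^{2} = 2\pi i\psi$, this gives $q^{2}\,d\zeta = i\bar{f}|q|^{2}|d\zeta|$ on $\partial\Omega$, and hence the pointwise identities $\bar{q}\,|d\zeta| = -ifq\,d\zeta$ and $\overline{fq}\,|d\zeta| = -iq\,d\zeta$.

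A second preliminary is that $q'(\infty) = 0$: Theorem \ref{propp}(iii) applied to $g\equiv 1$ gives $\int_{\partial\Omega}\psi\,d\zeta = 0$, so Lemma \ref{cauchy1} yields $\psi'(\infty) = 0$, whence $q'(\infty) = 0$ by matching Laurent coefficients in $q^{2} = 2\pi i\psi$ (using $q(\infty)=1$). The two boundary identities, together with Lemma \ref{cauchy1}, then give, for every $u\in A(\Omega)$, the reproducing formulas
\begin{equation*}
\int_{\partial\Omega} u\bar{q}\,|d\zeta| = 2\pi\gamma(K)\,u(\infty), \qquad \int_{\partial\Omega} u\,\overline{fq}\,|d\zeta| = 2\pi\,u'(\infty),
\end{equation*}
via $(ufq)'(\infty) = u(\infty)\gamma(K)$ (using $f(\infty)=0$, $f'(\infty)=\gamma(K)$) for the first, and $(uq)'(\infty) = u'(\infty)$ (using $q'(\infty)=0$) for the second.

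With these in hand, both extremal formulas follow by completing the square. For (\ref{estime1}), given admissible $g$, I would write $g = q + k$ with $k\in A(\Omega)$ and $k(\infty)=0$; the first identity gives $\int q\bar{k}\,|d\zeta| = 0$, so $\int|g|^{2}\,|d\zeta| = \int|q|^{2}\,|d\zeta| + \int|k|^{2}\,|d\zeta|$, and $\int|q|^{2}\,|d\zeta| = 2\pi\int|\psi|\,|d\zeta| = 2\pi\gamma(K)$ yields (\ref{estime1}), with equality iff $k=0$. For (\ref{estime2}), since $|f|=1$ on $\partial\Omega$ we have $\int|fq|^{2}\,|d\zeta| = 2\pi\gamma(K)$, and the second identity gives $2\pi h'(\infty) = \int h\,\overline{fq}\,|d\zeta|$; an algebraic rearrangement then yields
\begin{equation*}
2\Re h'(\infty) - \frac{1}{2\pi}\int_{\partial\Omega}|h|^{2}\,|d\zeta| = \gamma(K) - \frac{1}{2\pi}\int_{\partial\Omega}|h - fq|^{2}\,|d\zeta| \leq \gamma(K),
\end{equation*}
with equality iff $h = fq$.

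The heart of the argument, and what I expect to be the main obstacle, is the Ahlfors--Garabedian boundary duality $f\psi\,d\zeta = |\psi|\,|d\zeta|$; once this is in place, the rest is bookkeeping with Laurent expansions at $\infty$ and applications of the representation formulas of Theorem \ref{propp} and Lemma \ref{cauchy1}.
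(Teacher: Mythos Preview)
Your proof is correct and somewhat more structural than the paper's. For (\ref{estime1}) the paper uses a slick one-liner: for any admissible $g$ it writes
\[
\gamma(K)=f'(\infty)=(fg^{2})'(\infty)=\frac{1}{2\pi i}\int_{\partial\Omega} f(\zeta)g(\zeta)^{2}\,d\zeta
\]
via Lemma~\ref{cauchy1}, then bounds the modulus by $\frac{1}{2\pi}\int_{\partial\Omega}|g|^{2}\,|d\zeta|$ using $|f|\equiv 1$ on $\partial\Omega$; equality is obtained by plugging in $g=q$ and invoking Theorem~\ref{propp}(iv). This bypasses the pointwise Ahlfors--Garabedian duality $f\psi\,d\zeta=|\psi|\,|d\zeta|$ altogether. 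For (\ref{estime2}) the paper expands $0\le\frac{1}{2\pi}\|h-i\overline{qT}\|_{2}^{2}$, where $T$ is the unit tangent ($dz=T\,|dz|$); your boundary identity $\bar q\,|d\zeta|=-ifq\,d\zeta$ is precisely the statement that $i\,\overline{qT}=fq$ on $\partial\Omega$, so the two squares being completed are in fact the same object. What your route buys is that it makes explicit the Hilbert-space mechanism behind both formulas---$q$ acting as a Szeg\H{o}-type reproducing kernel, which the paper mentions after Theorem~\ref{Theo2} but does not exploit in its proof---and it yields the auxiliary fact $q'(\infty)=0$. The paper's argument for (\ref{estime1}) is shorter and needs no equality-case analysis of the triangle inequality.
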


The identity (\ref{estime1}) was already known in the case of analytic boundary, since the work of Garabedian \cite{GARA}. It is usually referred to as \textit{Garabedian's duality}. It was also studied for more general domains $\Omega$ by Havinson~\cite{HAV}.

\begin{proof}
Let $f$ be the Ahlfors function for $\Omega$, so that $f \in A(\Omega)$ with $f(\infty)=0$ and $|f| \equiv 1$ on $\partial \Omega$ and $f'(\infty)=\gamma(K)$. Let $\psi$ be the Garabedian function for $\Omega$ in the sense of Theorem \ref{propp}, and denote by $q$ the function in $A(\Omega)$ with $q(\infty)=1$ and
$$
q(z)^2 = 2\pi i \psi(z) \qquad (z \in \overline{\Omega} ).
$$

To prove (\ref{estime1}), let $g \in A(\Omega)$ with $g(\infty)=1$. We have
$$
\gamma(K) = f'(\infty) = (fg^2)'(\infty)
= \frac{1}{2\pi i} \int_{\partial \Omega} f(z)g(z)^2 dz
$$
by Lemma \ref{cauchy1}.
Thus,
$$
\gamma(K) \leq \frac{1}{2\pi}\int_{\partial \Omega} |f(z)| |g(z)|^2 |dz|
=  \frac{1}{2\pi}\int_{\partial \Omega}|g(z)|^2 |dz|.
$$
Taking the minimum over all such functions $g$, we obtain
\begin{equation}
\label{eqtom1}
\gamma(K) \leq \min \left\{ \frac{1}{2\pi}\int_{\partial \Omega} |g(z)|^2|dz|: g \in A(\Omega): \, g(\infty)=1 \right\}.
\end{equation}

On the other hand, take $g:= q$. Then $g \in A(\Omega)$ with $g(\infty)=1$ and
$$
\frac{1}{2\pi}\int_{\partial \Omega} |g(z)|^2|dz|
= \frac{1}{2\pi}\int_{\partial \Omega} |q(z)|^2|dz|
= \int_{\partial \Omega} |\psi(z)||dz| = \gamma(K),
$$
by Theorem \ref{propp}. Combining this with inequality (\ref{eqtom1}), we obtain (\ref{estime1}).

For (\ref{estime2}), consider the function $h=fq$. Then $h \in A(\Omega)$, $h(\infty)=0$ and
$$
2 \Re h'(\infty) - \frac{1}{2\pi}\int_{\partial \Omega} |h(z)|^2 |dz|
= 2\gamma(K) - \gamma(K) = \gamma(K).
$$
Thus
\begin{equation}
\label{eqtom2}
\max \left\{ 2\Re h'(\infty) - \frac{1}{2\pi}\int_{\partial \Omega} |h(z)|^2 |dz|: h \in A(\Omega) :\, h(\infty)=0 \right\}
\geq \gamma(K).
\end{equation}

On the other hand, let $h \in A(\Omega)$, $h(\infty)=0$.
Denote by $T(z)$ the unit tangent vector to $\partial \Omega$ at $z$,
that is, $dz = T(z) |dz|$ with $|T| \equiv 1$.
Let $\langle h_1, h_2 \rangle$ denote
$\int_{\partial \Omega} h_1(z) \overline{h_2(z)} |dz|$
and $\|h\|_2^2:=\langle h,h \rangle$.
Then
$$
0 \leq \frac{1}{2\pi}\|h-i\overline{qT}\|_{2}^2
= \frac{1}{2\pi} \|h\|_{2}^2 + \frac{1}{2\pi} \|q\|_{2}^2 + 2\Re \frac{1}{2\pi} \langle h,-i\overline{qT} \rangle,
$$
so that
$$
0 \leq  \frac{1}{2\pi}\int_{\partial \Omega} |h(z)|^2|dz|
+ \gamma(K)  -2 \Re  \frac{1}{2\pi i}\int_{\partial \Omega} h(z)q(z)dz.
$$
By Lemma \ref{cauchy1}, it follows that
$$
0 \leq  \frac{1}{2\pi}\int_{\partial \Omega} |h(z)|^2|dz|
+  \gamma(K)  -2 \Re (hq)'(\infty).
$$
Since $q(\infty)=1$ and $h(\infty)=0$, we have $(hq)'(\infty) = h'(\infty)$, and thus
$$
\gamma(K) \geq   2 \Re h'(\infty) - \frac{1}{2\pi}\int_{\partial \Omega} |h(z)|^2 |dz|.
$$
Combining this with inequality (\ref{eqtom2}), we obtain (\ref{estime2}).
\end{proof}

\section{Estimates for analytic capacity in the case of  piecewise-analytic boundary}
\label{sec4}

The objective of this section is to extend the estimates of Theorem \ref{est1} to another interesting case,
that of sets with piecewise-analytic boundaries.

Let us assume that $K$ is a compact set such that $\Omega:= \mathbb{C}_{\infty} \setminus K$ is a finitely connected domain with piecewise-analytic boundary. By this, we mean that the boundary consists of a finite number of non-intersecting Jordan curves, and that each boundary curve is the union of a finite number of analytic arcs. We further assume that every intersecting pair of analytic arcs meets at a corner that is conformally equivalent to a sector. More precisely, we suppose that if two analytic arcs intersect at a point $w$, then there exists a conformal map defined in a neighborhood $V$ of $w$, and mapping $V\cap\Omega$ onto a sector $\{re^{i\theta}:0<r<1,~0<\theta<\alpha\}$, where $0<\alpha<2\pi$.

To obtain the estimates of Theorem \ref{est1}, we used the fact that the functions $q$ and $fq$ extend continuously to the boundary, where $f$ is the Ahlfors function for $K$ and $q$ is the square root of $2 \pi i$ times the Garabedian function $\psi$.

In the case of piecewise-analytic boundary, this remains true for the Ahlfors function $f$. However, there are some issues regarding the Garabedian function: $\psi$ will have discontinuities at the finite set $E$ where the boundary fails to be smooth ($E$ is made of the endpoints of the analytic arcs in the boundary). Consequently, in order to extend Theorem \ref{est1}, we need to replace $A(\Omega)$ by a larger class of holomorphic functions in $\Omega$. It turns out that Smirnov classes over finitely connected domains are precisely what we need.

\subsection{Smirnov classes on finitely connected domains}

Our objective now is to present the theory of Smirnov classes $E^p(\Omega)$ on finitely connected domains.
For more details, we refer the reader to \textbf{\cite{DUR}} and \textbf{\cite{SMIR}}.

Let $\Omega$ be a finitely connected domain with rectifiable boundary. By that, we mean that $\partial \Omega$ consists of a finite number of pairwise disjoint rectifiable Jordan curves. Let $1 \leq p < \infty$.

We say that a function $h$ belongs to the \textit{Smirnov class} $E^p(\Omega)$ if $h$ is holomorphic in $\Omega$ and if there exists a sequence $\{\Omega_n\}$ of finitely connected subdomains of $\Omega$ with rectifiable boundaries $\{C_n\}$ such that:
\begin{enumerate}[\rm(i)]
\item $\Omega_n$ eventually contains each compact subset of $\Omega$,
\item $\limsup_{n\rightarrow \infty}\int_{C_n}|h(z)|^p|dz| < \infty,$
\item the lengths of the curves of the $C_n$'s are uniformly bounded.
\end{enumerate}

In the simply connected case, it is well known that condition (iii) is a superfluous requirement in the definition. This remains true in the finitely connected case.

It is not hard to prove that $A(\Omega) \subseteq E^p(\Omega)$.
Moreover, it is also well known that $E^p(\Omega)$ reduces to the classical Hardy space $H^p(\Omega)$ when $\Omega$ is the unit disk, and this is also true if $\Omega$ is a finitely connected domain with analytic boundary. However, in general, neither of the inclusions hold. Even for simple cases like Jordan domains with polygonal boundaries, the two classes are not equal.

The following well-known result is a generalization of Fatou's theorem on the classical Hardy spaces.

\begin{theorem}
\label{cauchy}
Let $p \geq 1$ and let $\Omega$ be a bounded finitely connected domain with rectifiable boundary. Suppose that $h \in E^p(\Omega)$. Then
\begin{enumerate}[\rm(i)]
\item  $h$ has nontangential boundary values $h^{*}$ almost everywhere on $\partial \Omega$, and $h^{*} \in L^p(\partial \Omega)$.
\item $h$ is the Cauchy integral of $h^{*}$:
$$
h(z) = \frac{1}{2\pi i} \int_{\partial \Omega} \frac{h^{*}(\zeta)}{\zeta-z}d\zeta
\qquad (z \in \Omega).
$$
\end{enumerate}
\end{theorem}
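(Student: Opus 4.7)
The plan is to reduce to the classical simply-connected version of the statement. For a Jordan domain $D$ with rectifiable boundary, it is a classical result (see \cite[Ch.~10]{DUR}) that every $h\in E^p(D)$ admits nontangential boundary values $h^*\in L^p(\partial D)$ and coincides with the Cauchy integral of $h^*$ on $D$. We will split $h$ along the boundary components of $\Omega$, apply this simply-connected result to each piece, and then reassemble.

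Label the components of $\partial\Omega$ as $\Gamma_0,\dots,\Gamma_m$ (with $\Gamma_0$ outer) and let $D_j\subset\mathbb{C}_\infty$ denote the Jordan domain bounded by $\Gamma_j$ that contains $\Omega$, so $\Omega=\bigcap_j D_j$. The defining exhaustion $\{\Omega_n\}$ gives, for $n$ large, a decomposition of $C_n=\partial\Omega_n$ into pairwise disjoint Jordan subcurves $C_n^0,\dots,C_n^m$, each a deformation of $\Gamma_j$ inside $\Omega$. Cauchy's theorem inside $\Omega_n$ yields, for $z\in\Omega_n$,
$$
h(z)=\sum_{j=0}^{m}h_j^{(n)}(z),\qquad h_j^{(n)}(z):=\frac{1}{2\pi i}\int_{C_n^j}\frac{h(\zeta)}{\zeta-z}\,d\zeta,
$$
and each $h_j^{(n)}$ is holomorphic on the component $D_j^{(n)}$ of $\mathbb{C}_\infty\setminus C_n^j$ containing $\Omega$. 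The uniform $L^p$-bound on $h|_{C_n^j}$, the uniform length bound on $C_n^j$, and H\"older's inequality make $\{h_j^{(n)}\}_n$ locally uniformly bounded on $D_j$. A normal-family extraction produces a subsequential locally uniform limit $h_j$ holomorphic on $D_j$; the relation $h=\sum_j h_j$ on $\Omega$ combined with analytic continuation forces $h_j$ to be independent of the subsequence. Using $\{D_j^{(n)}\}_n$ as an exhaustion of $D_j$ together with Cauchy transform estimates along the smooth curves $C_n^j$ transfers the uniform $L^p$-bound on $h|_{C_n^j}$ to a uniform $L^p$-bound on $h_j^{(n)}|_{C_n^j}$, giving $h_j\in E^p(D_j)$. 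The classical simply-connected theorem then provides $h_j^*\in L^p(\Gamma_j)$ with $h_j(z)=\frac{1}{2\pi i}\int_{\Gamma_j}\frac{h_j^*(\zeta)}{\zeta-z}\,d\zeta$ for $z\in D_j$.

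Since $\Gamma_k\subset D_j$ whenever $k\ne j$, each $h_j$ is holomorphic, hence continuous, across $\Gamma_k$. Therefore $h=\sum_j h_j$ has nontangential boundary values a.e.\ on each $\Gamma_k$, namely
$$
h^*|_{\Gamma_k}:=h_k^*+\sum_{j\ne k}h_j\big|_{\Gamma_k}\in L^p(\Gamma_k),
$$
proving~(i). For~(ii), it suffices to show that $\frac{1}{2\pi i}\int_{\Gamma_k}\frac{h_j(\zeta)}{\zeta-z}\,d\zeta=0$ whenever $k\ne j$ and $z\in\Omega$. When $k\ne 0$ and $k\ne j$, the Jordan interior of $\Gamma_k$ lies in $D_j$ and excludes $z$, so $h_j/(\zeta-z)$ is holomorphic inside $\Gamma_k$ and Cauchy's theorem yields zero. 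When $k=0$, one deforms $\Gamma_0$ in $D_j$ down to $\Gamma_j$: the pole at $\zeta=z$ inside $\Gamma_0$ contributes $+h_j(z)$, while the integral over $\Gamma_j$ contributes $-h_j(z)$ via the simply-connected Cauchy representation of $h_j$ (with the orientations induced by $\partial\Omega$), so the total vanishes. Summing over $k$ recovers $h(z)$, proving~(ii). The main obstacle is the transfer of the $L^p$-bound from $h|_{C_n^j}$ to $h_j^{(n)}|_{C_n^j}$ needed to show $h_j\in E^p(D_j)$; this rests on uniform $L^p$-bounds for the Cauchy transform along the approximating family $\{C_n^j\}$, a point where the rectifiability hypothesis is used crucially and the orientation bookkeeping in the decomposition is easy by comparison.
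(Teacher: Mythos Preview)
The paper does not prove this theorem; it is stated as ``well-known'' with references to \cite{DUR} and \cite{SMIR}, and no argument is given. In fact, the splitting $h=\sum_j h_j$ with $h_j\in E^p(D_j)$ that you construct along the way is exactly the Decomposition Theorem that the paper later quotes separately as Theorem~\ref{decomp}, citing \cite{TUM}. So your route is the natural one, and once the decomposition is in hand your treatment of (i) and (ii) is correct, modulo the routine remark that for $j\ge1$ the domain $D_j$ is unbounded, so the Cauchy representation there should be obtained from the bounded case by an inversion together with $h_j(\infty)=0$.

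The genuine gap is the step showing $h_j\in E^p(D_j)$. You propose to control $\|h_j^{(n)}\|_{L^p(C_n^j)}$ by $\|h\|_{L^p(C_n^j)}$ via ``uniform $L^p$-bounds for the Cauchy transform along the approximating family $\{C_n^j\}$'', and you suggest that rectifiability secures this. It does not: $L^p$-boundedness of the Cauchy singular integral on a curve is a deep property that fails for general rectifiable curves, and even for smooth curves the operator norm depends on the individual curve, so uniformity in $n$ is not automatic. Fortunately the step can be repaired without any singular-integral theory. After the normal-family argument has produced the limits $h_k$ and the identity $h=\sum_k h_k$ on $\Omega$, choose the exhaustion so that each $C_n^j$ lies in a fixed neighbourhood of $\Gamma_j$ disjoint from the other $\Gamma_k$ (e.g.\ level curves of Green's functions), and write $h_j=h-\sum_{k\ne j}h_k$ on $C_n^j$. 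For $k\ne j$ the function $h_k$ is holomorphic on an open set containing that neighbourhood, hence uniformly bounded on all $C_n^j$; combined with the uniform length bound and the hypothesis $\sup_n\int_{C_n^j}|h|^p<\infty$, this yields $\sup_n\int_{C_n^j}|h_j|^p<\infty$, whence $h_j\in E^p(D_j)$.
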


We shall also need the following generalization of Lemma \ref{cauchy1}:

\begin{corollary}
\label{cor1}
Let $K$ be a compact set in the plane, and suppose that the complement $\Omega$ of $K$ is a finitely connected domain with rectifiable boundary. Let $h \in E^1(\Omega)$. Then
$$
h'(\infty) = \frac{1}{2\pi i} \int_{\partial \Omega} h^{*}(\zeta)d\zeta.
$$
\end{corollary}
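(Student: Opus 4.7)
The plan is to truncate $\Omega$ by a large disk and invoke the Cauchy integral formula for the bounded case, Theorem \ref{cauchy}(ii). The Mergelyan route used to prove Lemma \ref{cauchy1} is not available here, since a general element of $E^1(\Omega)$ need not be a uniform limit on $\overline{\Omega}$ of functions holomorphic across $K$.

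Choose $R_0 > 0$ with $K \subset \{|z| < R_0\}$, so that the Laurent expansion
\[ h(\zeta) = h(\infty) + \frac{h'(\infty)}{\zeta} + \frac{a_2}{\zeta^2} + \cdots \]
is valid on $\{|\zeta| > R_0\}$. For $R > R_0$, set $\Omega_R := \Omega \cap \{|z| < R\}$ and $C_R := \{|\zeta| = R\}$; then $\Omega_R$ is a bounded finitely connected domain with rectifiable boundary $\partial\Omega \cup C_R$. In the paper's convention $\partial\Omega$ is oriented so that $K$ lies on the left (cf.\ the proof of Lemma \ref{cauchy1}); taking $C_R$ counterclockwise, the positively oriented boundary of $\Omega_R$ in the sense of Theorem \ref{cauchy} (i.e.\ with $\Omega_R$ on the left) decomposes as $C_R - \partial\Omega$.

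The main technical point, and the place I expect the real work, is to verify that $h|_{\Omega_R} \in E^1(\Omega_R)$. Integrability on the outer part $C_R$ is free, since $h$ is holomorphic on an open neighbourhood of $C_R$; near $\partial\Omega$ one recycles the exhausting sequence $\{\Omega_n\}$ furnished by $h \in E^1(\Omega)$, intersecting each $\Omega_n$ with a slightly shrunken concentric disk so as to produce an admissible sequence for $\Omega_R$. Checking that conditions (i)--(iii) of the Smirnov-class definition all transfer under the truncation is routine but requires care with rectifiability at the points where the old curves meet the new circle.

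Granted $h|_{\Omega_R} \in E^1(\Omega_R)$, Theorem \ref{cauchy}(ii) at an arbitrary $z_0 \in \Omega_R$ gives
\[ h(z_0) = \frac{1}{2\pi i}\int_{C_R}\frac{h(\zeta)}{\zeta-z_0}\,d\zeta - \frac{1}{2\pi i}\int_{\partial\Omega}\frac{h^*(\zeta)}{\zeta-z_0}\,d\zeta. \]
For $|z_0| < R$, I would expand $(\zeta-z_0)^{-1} = \sum_{k \geq 0} z_0^k/\zeta^{k+1}$ on $C_R$, substitute the Laurent series of $h$, and integrate term by term; only the $\zeta^{-1}$ coefficient survives, yielding $\tfrac{1}{2\pi i}\int_{C_R} h(\zeta)(\zeta-z_0)^{-1}\,d\zeta = h(\infty)$. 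Hence
\[ \frac{1}{2\pi i}\int_{\partial\Omega}\frac{h^*(\zeta)}{\zeta-z_0}\,d\zeta = h(\infty) - h(z_0) \qquad (z_0 \in \Omega \setminus \{\infty\}). \]
Multiplying by $z_0$ and letting $z_0 \to \infty$, the right-hand side tends to $-h'(\infty)$ by the Laurent expansion; on the left, $z_0/(\zeta-z_0) \to -1$ uniformly for $\zeta$ in the compact set $\partial\Omega$, and $h^* \in L^1(\partial\Omega)$ by Theorem \ref{cauchy}(i), so dominated convergence produces the limit $-\tfrac{1}{2\pi i}\int_{\partial\Omega} h^*(\zeta)\,d\zeta$. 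Cancelling the minus signs yields $h'(\infty) = \tfrac{1}{2\pi i}\int_{\partial\Omega} h^*(\zeta)\,d\zeta$.
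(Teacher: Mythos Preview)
Your argument is correct, but it differs from the paper's proof. The paper avoids truncating by a large circle and instead transfers the whole problem to a bounded domain via the inversion $z\mapsto 1/z$: after translating so that $0$ lies in the interior of $K$, one sets $\Omega_0:=\{z:1/z\in\Omega\}$ and $h_0(z):=h(1/z)$, checks $h_0\in E^1(\Omega_0)$ (the Jacobian factor $1/|w|^2$ being bounded on $\Omega$), and applies Theorem~\ref{cauchy}(ii) to $g_0(z):=(h_0(z)-h_0(0))/z$ at the point $0$; the change of variable $w=1/z$ then gives the formula directly, with no limiting step. Your route---truncate to $\Omega_R$, apply the Cauchy formula there, then let $z_0\to\infty$ using dominated convergence---is equally valid and perhaps more in the spirit of Lemma~\ref{cauchy1}. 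The price you pay is the verification that $h|_{\Omega_R}\in E^1(\Omega_R)$; this is in fact cleaner than you suggest, because for large $n$ the exhausting domains $\Omega_n$ contain the compact annulus $\{R_0\le|z|\le R\}\subset\Omega$, so the curves $C_n$ never meet $C_R$ and no rectifiability issues at intersection points arise. The paper's inversion trick sidesteps both this check and the final limit, at the cost of a change-of-variable computation.
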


\begin{proof}
Translating $K$, we may suppose that it contains $0$ in its interior. Set $\Omega_0:= \{z: 1/z \in \Omega\}$. Then $\Omega_0$ is a bounded finitely connected domain with rectifiable boundary, and $0 \in \Omega_0$. Define $h_0$ in $\Omega_0$ by $h_0(z)=h(1/z).$ It is easy to see that $h_0 \in E^1(\Omega_0)$, since if $\Gamma_0$ is any rectifiable Jordan curve in $\Omega_0$ not passing through $0$, then
$$
\int_{\Gamma_0}|h_0(z)||dz| = \int_{\Gamma}|h(w)|\frac{|dw|}{|w|^2},
$$
where $\Gamma:= \{z: 1/z \in \Gamma_0\}$, and the function $1/w^2$ is bounded in $\Omega$.
Now, the function $g_0(z):=(h_0(z)-h_0(0))/z$ clearly also belongs to $E^1(\Omega_0)$, so that
$$
h'(\infty)=g_0(0)=\frac{1}{2\pi i}\int_{\partial \Omega_0}\frac{h_0^{*}(z)-h_0(0)}{z^2} dz
= \frac{1}{2\pi i}\int_{\partial \Omega_0}\frac{h_0^{*}(z)}{z^2}dz,
$$
where we used the preceding theorem. Making the change of variable $w=1/z$, we obtain the result.
\end{proof}

\subsection{The Garabedian function}

Let us now return to the case where $K$ is a compact set in the plane whose complement $\Omega$ is a finitely connected domain with piecewise-analytic boundary. In this case, there are some issues regarding the Garabedian function $\psi$. If we proceed as in Theorem \ref{propp} and define
$$
\psi:= \frac{1}{a_{1}}(\tilde{\psi} \circ F)F',
$$
then $\psi$ will not extend continuously to the boundary: the first factor $\tilde{\psi} \circ F$ is in $A(\Omega)$, but the second one $F'$ has singularities at the endpoints of the analytic arcs, i.e. at the points of $E$. However, $F$ extends analytically across any analytic arc in the boundary, so $\psi$ as defined is continuous in $\overline{\Omega} \setminus E$.

The following result is the analogue of Theorem \ref{propp} in this new setting. Recall that $\tilde{\Omega}$ is a finitely connected domain with analytic boundary conformally equivalent to $\Omega$, and that $F: \Omega \rightarrow \tilde{\Omega}$ is a conformal map, normalized so that $F(\infty)=\infty$, and with expansion
$$
F(z)=a_1z + a_0 + \frac{a_{-1}}{z} + \frac{a_{-2}}{z^2} + \dots
$$
near infinity.

\begin{theorem}
\label{propp2}
Let $\tilde{\psi}$ be the Garabedian function for $\tilde{\Omega}$. Define a function $\psi$ in $\Omega$ by
$$
\psi:= \frac{1}{a_{1}}(\tilde{\psi} \circ F)F'.
$$
Then $\psi$ is a Garabedian function for $\Omega$, in the sense that
\begin{enumerate}[\rm(i)]
 \item   $\psi$ is holomorphic in $\Omega$ and continuous in $\overline{\Omega} \setminus E$,
 \item   $\psi(\infty) = 1/2 \pi i$,
 \item   $\psi$ represents evaluation of the derivative at $\infty$, in the sense that for all $g \in A(\Omega)$,
$$
g'(\infty) = \int_{\partial \Omega} g(\zeta)\psi(\zeta)d\zeta,
$$
 \item   $\int_{\partial \Omega} |\psi(\zeta)| |d\zeta| = \gamma(K)$,
\item   $\psi$ has an analytic square root in $\Omega$. More precisely, there exists a function $q$ holomorphic in $\Omega$ and continuous in $\overline{\Omega} \setminus E$ such that $q(\infty)=1$ and
$$
q(z)^2 = 2 \pi i \psi(z) \qquad (z \in \overline{\Omega} \setminus E).
$$
\item $\psi \in E^1(\Omega)$. Consequently, $q \in E^2(\Omega)$.
\end{enumerate}
\end{theorem}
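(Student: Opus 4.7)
The plan is to mirror the proof of Theorem \ref{propp}, now carefully handling the finite set $E$ of corners on $\partial\Omega$ where $F'$ fails to extend continuously. Properties (i) and (ii) are essentially immediate. For (i), $\tilde{\psi}\circ F\in A(\Omega)$ because $\tilde{\psi}\in A(\tilde{\Omega})$ and $F$ extends to a homeomorphism of the closures, while the second factor $F'$ extends analytically across every open analytic arc of $\partial\Omega$ by Schwarz reflection; hence $\psi=(1/a_{1})(\tilde{\psi}\circ F)F'$ is holomorphic in $\Omega$ and continuous on $\overline{\Omega}\setminus E$. Property (ii) follows from the same Laurent computation as in Theorem \ref{propp}, since $F'(z)\to a_{1}$ as $z\to\infty$ and $\tilde{\psi}(F(\infty))=\tilde{\psi}(\infty)=1/(2\pi i)$.

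For (iii) and (iv), I would run the change-of-variables argument $w=F(z)$ piecewise over the finitely many open analytic arcs of $\partial\Omega$. The one point needing attention is integrability near a corner $z_{0}\in E$ with interior angle $\alpha\in(0,2\pi)$: a local conformal straightening shows $F(z)-F(z_{0})\sim c(z-z_{0})^{\pi/\alpha}$, so $|F'(z)|\sim|z-z_{0}|^{\pi/\alpha-1}$, which is locally integrable against arclength because $\pi/\alpha-1>-1$. Summing the change of variables over the arcs yields
\[
\int_{\partial\Omega}|\psi(z)|\,|dz|=\frac{1}{|a_{1}|}\int_{\partial\tilde{\Omega}}|\tilde{\psi}(w)|\,|dw|=\gamma(\tilde{K})/|a_{1}|=\gamma(K)
\]
by Theorem \ref{Theo2}(iv) and Proposition \ref{prop1}, which is (iv). For (iii), apply Theorem \ref{Theo2}(iii) to $g\circ F^{-1}\in A(\tilde{\Omega})$ and combine the same change of variables with the identity $(g\circ F^{-1})'(\infty)=a_{1}g'(\infty)$.

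I expect the main obstacle to be (v), since Theorem \ref{sqrt} applies only to $C^{\infty}$ boundaries. Writing $2\pi i\psi=(1/a_{1})(\tilde{q}\circ F)^{2}F'$, where $\tilde{q}$ is the square root from Theorem \ref{Theo2}(v), reduces (v) to exhibiting a single-valued holomorphic square root of $F'$ on $\Omega$. For this I would choose a finitely connected $C^{\infty}$ subdomain $\Omega_{\varepsilon}\Subset\Omega$ with the same number of boundary components as $\Omega$ (round off each corner slightly). Then $F|_{\Omega_{\varepsilon}}$ is a conformal map between $C^{\infty}$ finitely connected domains (after a preliminary conjugation by $z\mapsto 1/(z-z_{0})$ to land in the bounded setting of Theorem \ref{sqrt}), so $F'$ admits a holomorphic square root on $\Omega_{\varepsilon}$. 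The existence of such a square root is a purely homological condition---the periods of $F''/F'$ around a system of generating cycles must lie in $4\pi i\mathbb{Z}$---and these periods can be computed on cycles inside $\Omega_{\varepsilon}$, hence transfer automatically to $\Omega$. Setting $q:=(1/\sqrt{a_{1}})(\tilde{q}\circ F)\sqrt{F'}$ with the branch fixed by $q(\infty)=1$ gives the desired function, with continuity on $\overline{\Omega}\setminus E$ because $\sqrt{F'}$ extends analytically across each open analytic arc, where $F'$ is analytic and nonvanishing.

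For (vi), I would invoke the classical fact that a conformal map of $\Omega$ onto a finitely connected domain with rectifiable boundary has derivative in $E^{1}$. Concretely, exhaust $\tilde{\Omega}$ by smooth domains $\tilde{\Omega}_{n}\uparrow\tilde{\Omega}$ (say level curves of Green's function) and set $\Omega_{n}:=F^{-1}(\tilde{\Omega}_{n})$; then $\int_{\partial\Omega_{n}}|F'(z)|\,|dz|=\mathrm{length}(\partial\tilde{\Omega}_{n})\to\mathrm{length}(\partial\tilde{\Omega})<\infty$, while the uniform boundedness of $\mathrm{length}(\partial\Omega_{n})$ follows by applying the same argument with the roles of $\Omega$ and $\tilde{\Omega}$ reversed (using the rectifiability of $\partial\Omega$). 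Thus $F'\in E^{1}(\Omega)$, and multiplication by the bounded factor $\tilde{\psi}\circ F$ gives $\psi\in E^{1}(\Omega)$. Finally, since $|q|^{2}=2\pi|\psi|$ is integrable on $\partial\Omega$, the same exhausting domains $\Omega_{n}$ verify that $q\in E^{2}(\Omega)$.
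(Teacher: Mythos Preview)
Your argument is correct, and for parts (i)--(iv) and (vi) it matches the paper's proof essentially line for line (the paper handles the change of variables in (iii)--(iv) by citing the measure-theoretic version in \cite[Theorem 7.26]{RUD} rather than working arc by arc, and for (vi) it gives the same one-line computation $\int_C|F'|\,|dz|=\operatorname{length}(F(C))$).

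The genuine divergence is in (v). The paper isolates the existence of $\sqrt{F'}$ as a separate Lemma~\ref{sqrt2} and proves it by exploiting the \emph{construction} of $F$: since $F=\phi_1\circ\cdots\circ\phi_n$ is a composition of Riemann maps onto $\mathbb{C}_\infty\setminus\overline{\mathbb{D}}$, the chain rule reduces the problem to the simply connected case, which is handled by an explicit computation with the map $G(z)=1/F(1/z)$ on a simply connected neighborhood of $\overline{D_1}\setminus\tilde E$. Your route is different and more topological: you apply Bell's Theorem~\ref{sqrt} not to $\Omega$ itself but to a compactly contained $C^\infty$ subdomain $\Omega_\varepsilon$ of the same connectivity, then transfer the square root to all of $\Omega$ by observing that the obstruction (the parity of the periods of $F''/F'$) lives in homology and is therefore already determined on cycles inside $\Omega_\varepsilon$. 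Both arguments are valid; the paper's is more self-contained and constructive (it does not invoke Theorem~\ref{sqrt} at all, only the classical Riemann and reflection theorems), while yours is shorter and highlights the purely homological nature of the square-root condition---at the price of relying on Bell's $C^\infty$ result as a black box.
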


For the proof, we need an analogue of Theorem \ref{sqrt}:

\begin{lemma}
\label{sqrt2}
Let $\Omega, \tilde{\Omega}$ and $F: \Omega \rightarrow \tilde{\Omega}$ be as in the above. Then $F'$ has an analytic square root in $\Omega$. More precisely, there exists a function $h$ holomorphic in $\Omega$ and continuous in $\overline{\Omega} \setminus E$ such that
$$
h(z)^2 = F'(z) \qquad (z \in \overline{\Omega} \setminus E).
$$
\end{lemma}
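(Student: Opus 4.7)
The plan is to show that $F'$, which is nonvanishing on $\Omega$ because $F$ is conformal, has trivial argument-monodromy along every closed curve in $\Omega$. This will produce a global holomorphic square root $h$ on $\Omega$, which I would then extend across the analytic arcs of $\partial\Omega$ by Schwarz reflection.

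First I would choose pairwise disjoint smooth Jordan curves $\gamma_1,\dots,\gamma_n$ in $\Omega$, each freely homotopic in $\Omega$ to one of the boundary components of $\Omega$ (for the component surrounding $\infty$, I would take $\gamma_j$ to be a large circle in $\mathbb{C}$ enclosing $K$). The image $F\circ\gamma_j$ is a smooth Jordan curve in $\tilde\Omega$ traversed with the same orientation as $\gamma_j$, since a holomorphic bijection is orientation-preserving. The Umlaufsatz, applied both to $\gamma_j$ and to $F\circ\gamma_j$, shows that their tangent vectors have equal winding numbers around $0$. Combined with the chain rule $(F\circ\gamma_j)'(t)=F'(\gamma_j(t))\gamma_j'(t)$ and additivity of arguments, this yields
\[
n(F'\circ\gamma_j,\,0)=n((F\circ\gamma_j)',\,0)-n(\gamma_j',\,0)=0.
\]
Since the classes of the $\gamma_j$ generate $\pi_1(\Omega)$, the winding number of $F'$ around $0$ along any closed loop in $\Omega$ must vanish. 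Consequently $F'$ admits a single-valued holomorphic logarithm in $\Omega$, and half its exponential provides the desired holomorphic square root $h$ with $h^2=F'$ throughout $\Omega$.

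For continuity on $\overline\Omega\setminus E$, I would let $\alpha$ be any open analytic arc of $\partial\Omega$. Since $F$ extends to a homeomorphism of $\overline\Omega$ onto $\overline{\tilde\Omega}$ carrying $\alpha$ into an analytic sub-arc of $\partial\tilde\Omega$, the Schwarz reflection principle supplies an analytic continuation of $F$ to a neighborhood of $\alpha$, in which $F'$ is holomorphic and nonvanishing. A local branch of $\sqrt{F'}$ chosen to agree with $h$ on the $\Omega$-side then extends $h$ analytically across $\alpha$; gluing these local extensions together makes $h$ holomorphic on $\Omega\cup(\partial\Omega\setminus E)$, and in particular continuous on $\overline\Omega\setminus E$.

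The main technical point is the Umlaufsatz step: one must verify that the loops $\gamma_j$ can indeed be chosen as smooth Jordan curves generating $\pi_1(\Omega)$, and that each conformal image $F\circ\gamma_j$ inherits both smoothness and orientation. Each of these facts is standard, but since $\Omega$ contains $\infty$ and only has piecewise-analytic boundary, one must be careful to push the $\gamma_j$ strictly into $\Omega$ and, for the component at infinity, to work on a large circle in $\mathbb{C}$.
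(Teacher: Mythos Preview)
Your argument is correct and takes a genuinely different route from the paper. The paper exploits the fact that $F$ is, by construction, a composition $\phi_1\circ\cdots\circ\phi_n$ of Riemann maps and proceeds by induction on $n$: for a single Riemann map it passes via $z\mapsto 1/z$ to a bounded simply connected domain, where the derivative extends to a nonvanishing holomorphic function on a simply connected neighborhood of the closure minus the corner set, and simple connectivity then furnishes the square root directly; the general case follows from the chain rule. Your approach instead works globally on $\Omega$, computing the argument-monodromy of $F'$ along generating loops via the Umlaufsatz and the identity $(F\circ\gamma_j)'=(F'\circ\gamma_j)\,\gamma_j'$; this is more conceptual and never uses the factorization of $F$. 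The paper's method is slightly more elementary in that it avoids the Umlaufsatz and any explicit discussion of $\pi_1(\Omega)$, while yours is more robust and would apply verbatim to any conformal equivalence between such domains, not only one assembled from Riemann maps. One minor remark: the large circle you single out is in fact null-homotopic in $\Omega$ (contract it through $\infty$, where $F'(\infty)=a_1\ne 0$), so only $n-1$ of your loops are actually needed; this redundancy does no harm to the argument.
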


\begin{proof}
The proof of \cite[Theorem 12.1]{BELL} also works in our case. However, we can use the fact that $F$ is a composition of Riemann maps to obtain a more elementary proof, as follows.

Let $n$ be the number of curves in the boundary of $\Omega$. Recall that by construction, $F$ is a composition of $n$ Riemann maps:
$$
F=\phi_1 \circ \phi_2 \circ \dots \circ \phi_n,
$$
where each $\phi_j$ maps some unbounded Jordan domain onto $\mathbb{C}_{\infty} \setminus \mathbb{D}$,
with $\phi_j(\infty)=\infty$.

We proceed by induction on $n$.

First, consider the case $n=1$.
Translating $\Omega$ if necessary, we can suppose that $0 \notin \overline{\Omega}$.
Put $D_1:= \{ z: 1/z \in \Omega \}$ and define
$$
G(z):=\frac{1}{F(1/z)} \qquad (z \in D_1).
$$
Then $G$ is a conformal mapping of the bounded Jordan domain $D_1$ onto $\mathbb{D}$ with $G(0)=0$. We know that $G$ extends to a homeomorphism of $\overline{D_1}$ onto $\mathbb{D}$ and analytically across any analytic arc of the boundary. Note that the boundary of $D_1$ consists of a finite number of analytic arcs separated by a finite set of points. Call this finite set of points $\tilde{E}$. It is easy to see that $G$ extends analytically to a simply connected domain $U$ containing $\overline{D_1} \setminus \tilde{E}$, with $G' \neq 0$ there.
Write $G'(z)=g(z)^2$ for some $g$ holomorphic in $U$. We have
$$
F(z)=\frac{1}{G(1/z)} \qquad (z \in \Omega)
$$
so that
$$
F'(z)= \frac{G'(1/z)}{z^2G(1/z)^2}
= \Bigl(\frac{g(1/z)}{zG(1/z)}\Bigr)^2:=h(z)^2
\qquad (z \in \Omega).
$$
But both $F'$ and $h$ are holomorphic in a neighborhood of $\overline{\Omega} \setminus E$. In particular, we have
$$
F'(z)=h(z)^2 \qquad (z \in \overline{\Omega} \setminus E).
$$
This completes the proof for the case $n=1$.

Now, suppose that the result holds for $n-1$, where $n \geq 2$.
Since $F=\phi_1 \circ \phi_2 \circ \dots \circ \phi_n$, we have
$$
F'(z) = \phi_1'((\phi_2 \circ \dots \circ \phi_n)(z)) (\phi_2 \circ \dots \phi_n)'(z)
$$
and the result follows from the induction hypothesis and the case $n=1$.
\end{proof}

We can now proceed to the proof of Theorem \ref{propp2}:

\begin{proof}
We already know that (i) holds, and the proof of (ii) is exactly the same than the one in Theorem \ref{propp}. Moreover, since $F$ is differentiable everywhere on the boundary except at a finite set of points, we can use the change-of-variables formula found e.g. in \cite[Theorem 7.26]{RUD}. Points (iii) and (iv) then follow exactly as in the proof of Theorem \ref{propp}.

Point (v) follows directly from Theorem \ref{Theo2}, together with Lemma \ref{sqrt2}.

For (vi), since $\psi:= \frac{1}{a_{1}}(\tilde{\psi} \circ F)F'$ and $\tilde{\psi} \circ F$ is bounded in $\Omega$, it suffices to show that $F' \in E^1(\Omega)$. But this is clear, since if $C$ is any rectifiable Jordan curve in $\Omega$, then
$$
\int_{C} |F'(z)||dz| = \int_{F(C)} |dw|.
$$
\end{proof}

\subsection{Proof of the estimates}

We can now prove:

\begin{theorem}
\label{est}
Let $K$ be a compact set in the plane,
and suppose that the complement $\Omega$ of $K$ is
a finitely connected domain with piecewise-analytic boundary. Then
\begin{equation*}
\gamma(K)
= \min \left\{ \frac{1}{2\pi}\int_{\partial \Omega} |g^{*}(z)|^2|dz|: g \in E^2(\Omega), \, g(\infty)=1 \right\}
\end{equation*}
and
\begin{equation*}
\gamma(K)
= \max \left\{ 2 \Re h'(\infty) - \frac{1}{2\pi}\int_{\partial \Omega} |h^{*}(z)|^2 |dz|: h \in E^2(\Omega), \, h(\infty)=0 \right\}.
\end{equation*}
Here the minimum and maximum are attained respectively by the functions $g=q$ and $h=fq$, where $f$ is the Ahlfors function for $K$ and $q$ is the function of Theorem~\ref{propp2}.
\end{theorem}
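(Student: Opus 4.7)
The plan is to imitate the proof of Theorem \ref{est1} line by line, substituting the Smirnov class $E^{2}(\Omega)$ for $A(\Omega)$, Corollary \ref{cor1} for Lemma \ref{cauchy1}, and the Garabedian data of Theorem \ref{propp2} for that of Theorem \ref{propp}. Two structural facts make this substitution go through cleanly: the Ahlfors function $f$ still belongs to $A(\Omega)$ and satisfies $|f^{*}|=1$ a.e.\ on $\partial\Omega$ (Theorem \ref{Theo1}), while $q\in E^{2}(\Omega)$ by Theorem \ref{propp2}(vi). I will use freely the closure properties $E^{2}(\Omega)\cdot E^{2}(\Omega)\subseteq E^{1}(\Omega)$ and $A(\Omega)\cdot E^{p}(\Omega)\subseteq E^{p}(\Omega)$, both immediate from condition (ii) of the Smirnov definition (Cauchy--Schwarz for the first, boundedness for the second).

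For the minimum identity, take $g\in E^{2}(\Omega)$ with $g(\infty)=1$. Then $fg^{2}\in E^{1}(\Omega)$, and a Laurent expansion at infinity (together with $f(\infty)=0$ and $f'(\infty)=\gamma(K)$) yields $(fg^{2})'(\infty)=\gamma(K)$. Corollary \ref{cor1} applied to $fg^{2}$ therefore gives
$$
\gamma(K)=\frac{1}{2\pi i}\int_{\partial\Omega}f^{*}(z)\bigl(g^{*}(z)\bigr)^{2}\,dz,
$$
and taking moduli, using $|f^{*}|=1$ a.e., produces $\gamma(K)\le (2\pi)^{-1}\int_{\partial\Omega}|g^{*}|^{2}|dz|$. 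Equality at $g=q$ is immediate from parts (iv)--(v) of Theorem \ref{propp2}, which together give
$$
\frac{1}{2\pi}\int_{\partial\Omega}|q^{*}|^{2}|dz|=\int_{\partial\Omega}|\psi^{*}||dz|=\gamma(K).
$$

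For the maximum identity, take $h\in E^{2}(\Omega)$ with $h(\infty)=0$ and let $T$ denote the a.e.-defined unit tangent vector to $\partial\Omega$, so that $T|dz|=dz$. The $L^{2}$ expansion
$$
0\le\frac{1}{2\pi}\int_{\partial\Omega}\bigl|h^{*}-i\,\overline{q^{*}T}\bigr|^{2}|dz|=\frac{1}{2\pi}\int_{\partial\Omega}|h^{*}|^{2}|dz|+\frac{1}{2\pi}\int_{\partial\Omega}|q^{*}|^{2}|dz|-2\Re\frac{1}{2\pi i}\int_{\partial\Omega}h^{*}q^{*}\,dz
$$
proceeds exactly as in Theorem \ref{est1}. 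The middle term equals $\gamma(K)$ by the minimum identity already proved, and since $hq\in E^{1}(\Omega)$, Corollary \ref{cor1} identifies the last integral with $(hq)'(\infty)=h'(\infty)$ (because $h(\infty)=0$ and $q(\infty)=1$). Rearranging yields $2\Re h'(\infty)-(2\pi)^{-1}\int|h^{*}|^{2}|dz|\le\gamma(K)$, with equality at $h=fq$ since $(fq)(\infty)=0$, $(fq)'(\infty)=\gamma(K)$, and $|f^{*}q^{*}|=|q^{*}|$ a.e.

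The real obstacle is not the algebra, which is word-for-word that of Theorem \ref{est1}, but the preliminary verification that each product of Smirnov-class functions appearing in the argument genuinely lies in the appropriate $E^{p}(\Omega)$ so that Corollary \ref{cor1} may be invoked. The finite set $E$ of corner points has linear measure zero on $\partial\Omega$, so the boundary integrals are insensitive to the failure of $q$ and $\psi$ to extend continuously across $E$; this is precisely what makes the Smirnov formalism the correct substitute for $A(\Omega)$ in this piecewise-analytic setting.
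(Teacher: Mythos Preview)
Your proposal is correct and is precisely the paper's approach: the paper's own proof consists of the single sentence ``The proof is exactly the same as the one in Theorem \ref{est1}. Use Theorem \ref{propp2} instead of Theorem \ref{propp} and Corollary \ref{cor1} instead of Lemma \ref{cauchy1},'' and you have faithfully carried out those substitutions with the necessary bookkeeping (that $fg^{2},\,hq\in E^{1}(\Omega)$ so Corollary \ref{cor1} applies, and that $|f|\equiv 1$ on $\partial\Omega$ by Theorem \ref{Theo1}). If anything, your write-up is more explicit than the paper's about why the products land in the right Smirnov class.
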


\begin{proof}
The proof is exactly the same as the one in Theorem \ref{est1}. Use Theorem \ref{propp2} instead of Theorem \ref{propp} and Corollary \ref{cor1} instead of Lemma \ref{cauchy1}.
\end{proof}

\begin{remark}
It follows from the first estimate that the function $q$ is unique and, consequently, the Garabedian function $\psi$ too. Indeed, $q$ is an element of minimal norm in the convex set $S:=\{g \in E^2(\Omega): g(\infty)=1\}$, which is necessarily unique by an elementary Hilbert-space argument.
\end{remark}

\section{Computation of analytic capacity}
\label{sec5}

\subsection{Description of the method}

In this section, we present a method based on the estimates of Theorem \ref{est1} (respectively Theorem \ref{est}) to compute the analytic capacity of a compact set $K$ whose complement $\Omega$ is a finitely connected domain with analytic (respectively piecewise-analytic) boundary. The method yields upper and lower bounds for $\gamma(K)$.

Let $A_0(\Omega):=\{ f \in A(\Omega): f(\infty)=0 \}$ and let $\mathcal{F}$ be a subset of $A_0(\Omega)$ whose span is dense, with respect to the $L^2$-norm on $\partial \Omega$. For example, $\mathcal{F}$ could be the set of all functions of the form $(z-a)^{-n}$, where $n \in \mathbb{N}$ and $a$ belongs to some prescribed set $S$ containing one point in each component of the interior of $K$. This is a consequence of Mergelyan's theorem.

Let $\mathcal{A}=\{g_1, g_2, \dots, g_n\}$ be a finite subset of $\mathcal{F}$. The functions $g_1, g_2, \dots, g_n$ will be called \textit{approximating functions}.

The method for the upper bound is based on the following:
\begin{itemize}
\item Finding the function $g$ in the span of $g_1, g_2, \dots, g_n$ that minimizes the quantity
$$
\frac{1}{2\pi}\int_{\partial \Omega} |1+g(z)|^2|dz|.
$$
\end{itemize}

In view of Theorems \ref{est1} and \ref{est}, this gives an upper bound for $\gamma(K)$.

More precisely, the method is the following.

\begin{itemize}
\item Define
$$
g(z):= \sum_{j=1}^n \alpha_j g_j(z),
$$
where $\alpha_1, \dots, \alpha_n$ are complex numbers to determine. Write $\alpha_j:=c_j + id_j$, and then compute the integral
$$
\frac{1}{2\pi}\int_{\partial \Omega} |1+g(z)|^2|dz|.
$$
This gives an expression that can be written in the form $\frac{1}{2}\mathbf{x^T}\mathbf{A}\mathbf{x} + \mathbf{b}\mathbf{x} + c$, where $\mathbf{x}=(c_1, c_2, \dots, c_n, d_1, d_2, \dots, d_n)$, and where $\mathbf{A}$ is a real symmetric positive-definite $2n \times 2n$ matrix, $\mathbf{b}$ is a real vector of length $2n$ and $c$ is a positive constant.
\item Find the $c_j$'s and $d_j$'s that minimize this expression. This can be done for example by solving the linear system
 $$
 \mathbf{A}\mathbf{x}+\mathbf{b}=0.
 $$
\item Create a new set of approximating functions $\tilde{\mathcal{A}}$ by adding functions from $\mathcal{F}$ to $\mathcal{A}$, and then repeat the procedure with $\mathcal{A}$ replaced by $\tilde{\mathcal{A}}$.
\end{itemize}

The above yields a sequence of decreasing upper bounds for $\gamma(K)$.
Clearly, it can be adapted to yield a sequence of increasing lower bounds for $\gamma(K)$, using the other estimate of Theorems \ref{est1} and \ref{est}.

\subsection{Convergence of the method}

In this subsection, we prove that the upper and lower bounds obtained with the method can in principle be made arbitrarily close.

First, recall that the minimum and maximum in Theorems \ref{est1} and \ref{est} are attained respectively by the functions $q$ and $fq$, where $f$ is the Ahlfors function for $K$ and $q$ is the square root of $2\pi i$ times the Garabedian function for $\Omega$.

If the boundary of $\Omega$ is $C^{\infty}$, then both of these functions belong to $A(\Omega)$. It follows from Mergelyan's theorem that we can approximate them uniformly on $\partial \Omega$ by rational functions with poles in $S$, where $S$ is some prescribed set containing at least one point in each component of the interior of $K$. On the other hand, Lemma \ref{cauchy1} implies that if $h,R \in A(\Omega)$ and $|h-R| < \epsilon$ on $\partial \Omega$, then $|R'(\infty)-h'(\infty)|<C\epsilon$ where $C$ depends only on $\Omega$.
Hence, it follows that for every $\epsilon>0$, there exist rational functions $R_1, R_2$ vanishing at $\infty$ and with poles in the prescribed set $S$, such that
$$
\frac{1}{2\pi}\int_{\partial \Omega} |1+R_2(z)|^2 |dz| - \epsilon
\leq \gamma(K) \leq 2  \Re  R_1'(\infty) - \frac{1}{2\pi}\int_{\partial \Omega} |R_1(z)|^2 |dz| + \epsilon.$$
This proves the convergence of the bounds in the $C^{\infty}$ boundary case.

For the piecewise-analytic boundary case, we need an analogue of Mergelyan's Theorem for the Smirnov class $E^2(\Omega)$.
Assume that $K$ is a compact set in the plane whose complement is a finitely connected domain with piecewise-analytic boundary.
We know that $E^2(\Omega)$ contains $A(\Omega)$, but is it true that $A(\Omega)$ is \textit{dense} in $E^2(\Omega)$? In other words, can every function $h$ in $E^2(\Omega)$ be approximated on the boundary by functions $f_n$ in $A(\Omega)$, in the sense that
$$
\int_{\partial \Omega}|h^{*}(z)-f_n(z)|^2 |dz| \rightarrow 0
$$
as $n \rightarrow \infty$?
In turns out that the answer is yes. Before we prove this, we need the definition of \textit{Smirnov domains}:

Let $D \subseteq \mathbb{C}$ be a bounded Jordan domain with rectifiable boundary. Since $D$ is simply connected, there is a conformal mapping $\phi$ of the open unit disk $\mathbb{D}$ onto $D$. It is well known that $\phi'$ is in $H^1(\mathbb{D})$, and, since it has no zeros, we have a canonical factorization of the form
$$
\phi'(z) = S(z)Q(z) \qquad (z \in \mathbb{D}),
$$
where $S$ is a singular inner function and $Q$ is outer. We say that $D$ is a \textit{Smirnov domain} if $S \equiv 1$, that is, if $\phi'$ is outer.
It can be shown that this definition is independent of the function $\phi$; it depends only on the domain $D$. A simple sufficient condition for $D$ to be a Smirnov domain is that $\arg{\phi'}$ be bounded either from above or below. Geometrically, this means that the local rotation of the mapping is bounded; loosely speaking, the boundary curve cannot spiral too much. In particular, $D$ is a Smirnov domain if it has smooth (or piecewise-smooth) boundary. We refer the reader to \textbf{\cite{DUR}} or \cite[Chapter 7]{POM} for more details on Smirnov domains.

We say that a function $h \in L^p(\partial D)$ belongs to the $L^p(\partial D)$-closure of the polynomials if there is a sequence $(p_n)$ of polynomials such that
$$
\int_{\partial D}|h(z)-p_n(z)|^p|dz| \rightarrow 0 \qquad (n \rightarrow \infty).
$$
It is convenient to identify $E^p(D)$ with its set of boundary values functions. Thus, $E^p(D)$ is a closed subspace of $L^p(\partial D)$ which contains the polynomials, hence also their closure. For the reverse inclusion, we have the following criterion:

\begin{theorem}
\label{smirn}
Let $D$ be a bounded Jordan domain with rectifiable boundary, and let $1 \leq p < \infty$. Then $E^p(D)$ coincides with the $L^p(\partial D)$-closure of the polynomials if and only if $D$ is a Smirnov domain.
\end{theorem}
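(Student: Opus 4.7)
The plan is to transport the question to the disk via a Riemann map $\phi:\mathbb{D}\to D$ and reduce everything to a classical cyclicity problem in $H^p(\mathbb{D})$. The key object is the isometry $T:E^p(D)\to H^p(\mathbb{D})$ defined by $T(h):=(h\circ\phi)(\phi')^{1/p}$. Since $\phi$ is univalent, $\phi'$ is zero-free and admits a single-valued holomorphic $p$-th root; moreover $\phi'\in H^1(\mathbb{D})$ because $\partial D$ is rectifiable, so $(\phi')^{1/p}\in H^p(\mathbb{D})$. That $T$ is an isometry is immediate from the change of variable $|dw|=|\phi'(\zeta)||d\zeta|$ on the boundary; surjectivity follows because $(\phi')^{-1/p}$ is also holomorphic on $\mathbb{D}$, so given $F\in H^p(\mathbb{D})$ one may set $h:=(F\cdot(\phi')^{-1/p})\circ\phi^{-1}$ and verify the $E^p(D)$ defining conditions via the exhausting subdomains $\phi(\{|\zeta|<1-1/n\})$. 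Under $T$, a polynomial $p(w)$ on $D$ corresponds to $(p\circ\phi)\cdot(\phi')^{1/p}$ in $H^p(\mathbb{D})$, so the $L^p(\partial D)$-closure of polynomials in $E^p(D)$ is isometrically identified with the $H^p(\mathbb{D})$-closure of the set
$$
\mathcal{M}:=\{(p\circ\phi)\cdot(\phi')^{1/p}:p\in\mathbb{C}[w]\}.
$$

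Next I would show that $\overline{\mathcal{M}}^{H^p}$ equals the closed shift-invariant subspace of $H^p(\mathbb{D})$ generated by $(\phi')^{1/p}$. The key point is that polynomials in $\phi$ are sup-norm dense in $A(\mathbb{D})$: Mergelyan's theorem applied to $\overline D$ (whose complement in $\mathbb{C}$ is connected, since $D$ is simply connected) gives that polynomials in $w$ are uniformly dense in $A(D)$, and precomposing with $\phi$, which extends to a homeomorphism of the closures by Carath\'eodory's theorem, transports this to $A(\mathbb{D})$. Because uniform convergence on $\partial\mathbb{D}$ multiplied against the $L^p$ function $(\phi')^{1/p}$ yields $H^p$-convergence, $\overline{\mathcal{M}}^{H^p}$ coincides with the $H^p$-closure of $\{q\cdot(\phi')^{1/p}:q\in A(\mathbb{D})\}$, hence (using that $\mathbb{C}[z]$ is itself dense in $A(\mathbb{D})$) with the cyclic subspace generated by $(\phi')^{1/p}$.

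Finally I would invoke the Beurling-type characterization of cyclic vectors for the shift on $H^p(\mathbb{D})$: an element $u\in H^p(\mathbb{D})$ is cyclic if and only if it is outer. For $p=2$ this is Beurling's theorem; for $1\le p<\infty$ in general it is due to de Leeuw--Rudin--Srinivasan and may be cited from Duren's book. Applied to $u=(\phi')^{1/p}$, and using that the inner--outer factorization of the zero-free function $\phi'\in H^1$ commutes with taking $p$-th roots, the cyclic subspace exhausts $H^p(\mathbb{D})$ if and only if $\phi'$ itself is outer --- exactly the definition of $D$ being a Smirnov domain. Combining the three steps gives the equivalence. The principal obstacle is the cyclicity step for $p\neq 2$, which for $p=2$ follows from Beurling but otherwise requires the more delicate de Leeuw--Rudin--Srinivasan argument that I would prefer to cite; the remaining work is routine bookkeeping with the change of variables, Mergelyan approximation, and the canonical factorization.
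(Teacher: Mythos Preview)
Your proposal is correct and is precisely the classical argument. The paper does not prove this theorem at all but simply cites Duren's \textit{Theory of $H^p$ spaces}, Theorem~10.6; your sketch reconstructs the proof given there, namely transporting the problem to $\mathbb{D}$ via the isometry $h\mapsto(h\circ\phi)(\phi')^{1/p}$ and identifying the closure of the polynomials with the shift-invariant subspace generated by $(\phi')^{1/p}$, which equals all of $H^p(\mathbb{D})$ if and only if $(\phi')^{1/p}$ is outer, i.e.\ $\phi'$ is outer.
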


\begin{proof}
See e.g. \textbf{\cite[Theorem 10.6]{DUR}}.
\end{proof}

Our objective is to generalize Theorem \ref{smirn} to finitely connected domains. We shall need the following theorem:

\begin{theorem}[Decomposition Theorem]
\label{decomp}
Suppose that $D$ is a bounded finitely connected domain whose boundary consists of pairwise disjoint rectifiable Jordan curves $\Gamma_1, \Gamma_2, \dots, \Gamma_n$, where the outer boundary of $D$ is $\Gamma_1$. For $1 \leq j \leq n$, let $D_j$ be the component of $\mathbb{C}_{\infty} \setminus \Gamma_j$ that contains $D$.
Let $1 \leq p < \infty$, and let $h \in E^p(D)$. Then $h$ can be decomposed uniquely as
$$
h(z) = h_1(z)+h_2(z)+\dots+h_n(z) \qquad (z \in D),
$$
where each $h_j$ belongs to $E^p(D_j)$ and $h_j(\infty)=0$ for $2 \leq j \leq n$.
\end{theorem}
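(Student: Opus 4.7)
The plan is to define each $h_{j}$ explicitly as a Cauchy-type integral of the boundary values $h^{*}$ over the single curve $\Gamma_{j}$, read the decomposition $h=h_{1}+\cdots+h_{n}$ directly from the Cauchy representation of Theorem~\ref{cauchy}, and then verify Smirnov-class membership by a tubular-neighborhood trick. Uniqueness will be established separately by a sphere-gluing argument.

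Orient each $\Gamma_{k}$ as a positively oriented simple closed curve in $\mathbb{C}$, so that $\partial D$ corresponds to the oriented chain $\Gamma_{1}-\Gamma_{2}-\cdots-\Gamma_{n}$. I would set
\[
h_{1}(z):=\frac{1}{2\pi i}\int_{\Gamma_{1}}\frac{h^{*}(\zeta)}{\zeta-z}\,d\zeta\quad(z\in D_{1}),\qquad h_{j}(z):=-\frac{1}{2\pi i}\int_{\Gamma_{j}}\frac{h^{*}(\zeta)}{\zeta-z}\,d\zeta\quad(z\in D_{j},\ j\ge 2).
\]
Each $h_{j}$ is then holomorphic on $D_{j}$, the identity $h=h_{1}+\cdots+h_{n}$ on $D$ follows immediately from Theorem~\ref{cauchy}, and for $j\ge 2$ the integrand decays like $1/z$ at infinity so $h_{j}(\infty)=0$.

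The main obstacle is to show $h_{j}\in E^{p}(D_{j})$. Fix $j$. Because the $\Gamma_{k}$ are pairwise disjoint, there is an open tubular neighborhood $U$ of $\Gamma_{j}$ whose closure is compactly contained in $\bigcap_{k\ne j}D_{k}$; on $\overline{U}$, each $h_{k}$ with $k\ne j$ is holomorphic and so uniformly bounded by some constant $M$. Let $\{\Omega_{m}\}$ be an exhausting sequence of $D$ witnessing $h\in E^{p}(D)$, with rectifiable boundaries $C_{m}=C_{m}^{1}\cup\cdots\cup C_{m}^{n}$ and each $C_{m}^{k}$ approaching $\Gamma_{k}$. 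For $m$ large $C_{m}^{j}\subset U$, and on $C_{m}^{j}$ the identity $h_{j}=h-\sum_{k\ne j}h_{k}$ holds. Hence
\[
\int_{C_{m}^{j}}|h_{j}|^{p}\,|dz|\le 2^{p-1}\int_{C_{m}^{j}}|h|^{p}\,|dz|+2^{p-1}((n-1)M)^{p}\,\ell(C_{m}^{j}),
\]
and the uniform bounds on $\int_{C_{m}}|h|^{p}\,|dz|$ and on the arc-lengths of $C_{m}$ built into the definition of $E^{p}(D)$ make the right-hand side uniformly bounded in $m$. Since each $C_{m}^{j}$ bounds a subdomain of $D_{j}$ (the component containing $\infty$ when $j\ge 2$, the interior of $C_{m}^{1}$ when $j=1$), and these subdomains exhaust $D_{j}$, this verifies $h_{j}\in E^{p}(D_{j})$.

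For uniqueness, suppose $h_{1}+\cdots+h_{n}\equiv 0$ on $D$ with each $h_{j}\in E^{p}(D_{j})$ and $h_{j}(\infty)=0$ for $j\ge 2$. Then $h_{1}=-\sum_{k\ge 2}h_{k}$ on $D$; the right-hand side extends holomorphically to $\bigcap_{k\ge 2}D_{k}$, which together with $D_{1}$ covers $\mathbb{C}_{\infty}\setminus\Gamma_{1}$. Both functions are $E^{p}$ on their respective sides of $\Gamma_{1}$ with identical nontangential boundary values almost everywhere, so a Morera/Plemelj argument (the Cauchy-integral jump across $\Gamma_{1}$ vanishes) glues them into a single function holomorphic on all of $\mathbb{C}_{\infty}$. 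This function vanishes at $\infty$ and is therefore identically zero, forcing $h_{1}\equiv 0$. Iterating the same argument with $\Gamma_{2}$, then $\Gamma_{3}$, and so on, yields $h_{j}\equiv 0$ for every $j$.
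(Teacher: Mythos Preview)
The paper does not actually prove this theorem: it simply cites the original reference of Tumarkin and Havinson. So there is no ``paper's own proof'' to compare against; your proposal supplies what the paper omits, and the Cauchy-integral definition of the $h_j$ together with the tubular-neighborhood estimate is indeed the standard argument.

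Two points deserve tightening. First, in the existence step you assume without comment that an exhausting sequence witnessing $h\in E^p(D)$ can be chosen so that $C_m$ splits as $C_m^1\cup\cdots\cup C_m^n$ with each $C_m^k$ a Jordan curve near $\Gamma_k$. This is true, but it is not immediate from the bare definition; it uses the fact that $E^p$-membership is independent of the particular exhausting sequence (equivalently, the conformal characterization $(h\circ\phi)(\phi')^{1/p}\in H^p$), so that one may pass to the canonical exhaustion by level curves of a conformal map onto a circle domain. You should say this explicitly.

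Second, your uniqueness argument is more complicated than necessary and slightly misstated. You claim that $D_1$ together with $\bigcap_{k\ge 2}D_k$ covers only $\mathbb{C}_\infty\setminus\Gamma_1$, forcing a Plemelj-type gluing across $\Gamma_1$. In fact $\Gamma_1\subset\bigcap_{k\ge 2}D_k$, so the union is all of $\mathbb{C}_\infty$; the two holomorphic functions agree on the open overlap $D$ and patch directly to an entire function on the sphere vanishing at $\infty$, hence identically zero, with no boundary-value argument needed. In the iteration, after obtaining $h_1\equiv 0$ you have $\sum_{k\ge 2}h_k=0$ only on $D$, but since this sum is holomorphic on the connected set $\bigcap_{k\ge 2}D_k\supset D$, the identity theorem extends the relation there before you peel off $h_2$; this step is missing from your sketch.
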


\begin{proof}
See \textbf{\cite{TUM}}.
\end{proof}

The following is a generalization of Theorem \ref{smirn} to finitely connected domains, in the case $p=2$:

\begin{theorem}
\label{theonice}
Let $D$ and $D_j$ $(1 \leq j \leq n)$ be as in Theorem \ref{decomp}, and suppose in addition that the curves $\Gamma_j$ are piecewise analytic.  Let $a_1:=\infty$ and for $2 \leq j \leq n$, fix a point $a_j$ in the interior of the complement of $D_j$.
Then the rational functions with poles in the set $\{a_1, a_2, \dots, a_n\}$ are dense in $E^2(D)$. In other words, for every $h \in E^2(D)$, there exists a sequence $(R_n)$ of rational functions with poles in the prescribed set $\{a_1, a_2, \dots, a_n\}$ such that
$$
\int_{\partial D}|h^{*}(z)-R_n(z)|^2 |dz| \rightarrow 0
$$
as $n \rightarrow \infty$.
\end{theorem}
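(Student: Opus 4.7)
The plan is to combine the Decomposition Theorem (Theorem \ref{decomp}) with the simply-connected Smirnov-domain result (Theorem \ref{smirn}) and an elementary Cauchy-integral argument to transfer $L^2$-convergence on a single boundary curve to $L^2$-convergence on all of $\partial D$.

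First I would decompose $h \in E^2(D)$ as $h = h_1 + h_2 + \cdots + h_n$, where $h_j \in E^2(D_j)$ and $h_j(\infty) = 0$ for $j \geq 2$. It then suffices to approximate each $h_j$, in the $L^2(\partial D)$ norm, by rational functions whose only pole is $a_j$. For $j=1$, the domain $D_1$ is a bounded Jordan domain with piecewise-analytic boundary, hence a Smirnov domain; Theorem \ref{smirn} gives a sequence of polynomials $p_m \to h_1^*$ in $L^2(\Gamma_1)$, and polynomials are exactly rational functions with pole at $a_1 = \infty$.

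For $j \geq 2$, I would use the Möbius change of variable $w = 1/(z - a_j)$, which maps $D_j$ conformally onto a bounded simply-connected domain $\tilde D_j$ with piecewise-analytic boundary $\tilde\Gamma_j$, sending $\infty$ to $0$. Because $w$ stays in a compact annulus bounded away from $0$ and $\infty$ on $\tilde\Gamma_j$, a straightforward comparison of the arclength measures $|dz|$ and $|dw|$ shows that the pullback $\tilde h(w) := h_j(a_j + 1/w)$ lies in $E^2(\tilde D_j)$. Since $\tilde D_j$ is a Smirnov domain, Theorem \ref{smirn} provides polynomials $p_m(w)$ with $p_m \to \tilde h^*$ in $L^2(\tilde\Gamma_j)$. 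Pulling back, $R_m(z) := p_m(1/(z - a_j))$ is rational with pole only at $a_j$, and the same comparison of arclength measures yields $R_m \to h_j^*$ in $L^2(\Gamma_j)$.

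The step that requires a small additional argument is upgrading $L^2$-convergence on $\Gamma_j$ (which is just one component of $\partial D$) to $L^2$-convergence on the entire boundary $\partial D$. Here I would exploit the Cauchy integral representation of Theorem \ref{cauchy}(ii): both $h_j$ and $R_m$ lie in $E^2(D_j)$ and have boundary $\Gamma_j$, so for any $z \in D_j$ one has
\[
h_j(z) - R_m(z) = \frac{1}{2\pi i}\int_{\Gamma_j} \frac{h_j^*(\zeta) - R_m(\zeta)}{\zeta - z}\, d\zeta.
\]
Applying Cauchy--Schwarz and noting that $\partial D \setminus \Gamma_j$ is a compact subset of $D_j$ (the $\Gamma_k$ are pairwise disjoint closed curves), one obtains uniform convergence $R_m \to h_j$ on $\partial D \setminus \Gamma_j$, hence in $L^2(\partial D \setminus \Gamma_j)$. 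Combined with the $L^2(\Gamma_j)$-convergence already established, this gives $R_m \to h_j$ in $L^2(\partial D)$. Summing over $j$ completes the proof. The main technical point to watch is the first transfer: the Smirnov-domain hypothesis on $\tilde D_j$ is where piecewise-analyticity of $\Gamma_j$ is really used, and the pullback computation must carefully account for the weight $|w|^{-2}$ arising from $|dz| = |w|^{-2}|dw|$.
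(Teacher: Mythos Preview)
Your proposal is correct and follows essentially the same route as the paper: decompose via Theorem~\ref{decomp}, handle each $h_j$ by applying Theorem~\ref{smirn} on a bounded Smirnov domain (using the inversion $w=1/(z-a_j)$ for $j\ge2$), and upgrade $L^2(\Gamma_j)$-convergence to $L^2(\partial D)$ by a Cauchy-integral estimate on the compact set $\partial D\setminus\Gamma_j\subset D_j$, which is exactly the content of Lemma~\ref{lem22} in the paper. One small technicality to fix: for $j\ge2$ the domain $D_j$ is unbounded, so Theorem~\ref{cauchy}(ii) does not apply verbatim and your displayed Cauchy formula is off by the constant $(h_j-R_m)(\infty)=-p_m(0)$; the paper avoids this by carrying out the compactness step on the bounded domain $\tilde D_j$ (and, incidentally, absorbs the Jacobian by approximating $g(z)/z$ rather than $g$), and your argument is repaired the same way.
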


For the proof, we need the following lemma:

\begin{lemma}
\label{lem22}
Let $1 \leq p< \infty$, let $U$ be a  bounded Jordan domain with rectifiable boundary and let $K$ be a compact subset of $U$. Then there is a constant $M$, depending only on $p$ and $K$, such that
$$
|g(w)| \leq M \|g^{*}\|_p
$$
for all $w \in K$ and for every function $g \in E^p(U)$, where
$$
\|g^{*}\|_p:= \Bigl(\int_{\partial U}|g^{*}(z)|^p|dz|\Bigr)^{1/p}.
$$
\end{lemma}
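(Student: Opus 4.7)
The plan is to derive the pointwise bound from the Cauchy integral formula in Theorem \ref{cauchy}(ii) together with Hölder's inequality, exploiting the fact that $\mathrm{dist}(K,\partial U)>0$ since $K$ is compact and contained in the open set $U$.

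First I would observe that, since $U$ is a bounded Jordan domain with rectifiable boundary, Theorem \ref{cauchy} applies to any $g\in E^p(U)$. In particular, for every $w\in U$ we have the representation
$$
g(w) \;=\; \frac{1}{2\pi i}\int_{\partial U}\frac{g^{*}(\zeta)}{\zeta-w}\,d\zeta.
$$
Taking absolute values gives
$$
|g(w)| \;\leq\; \frac{1}{2\pi}\int_{\partial U}\frac{|g^{*}(\zeta)|}{|\zeta-w|}\,|d\zeta|.
$$
Set $\delta := \mathrm{dist}(K,\partial U)$; this is strictly positive because $K$ is a compact subset of the open set $U$.

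Next I would split into two cases according to whether $p=1$ or $p>1$. If $p=1$, then for $w\in K$ we have $|\zeta-w|\geq \delta$ for all $\zeta\in\partial U$, so
$$
|g(w)| \;\leq\; \frac{1}{2\pi\delta}\int_{\partial U}|g^{*}(\zeta)|\,|d\zeta| \;=\; \frac{1}{2\pi\delta}\,\|g^{*}\|_{1},
$$
and we can take $M = 1/(2\pi\delta)$. If $p>1$, let $q=p/(p-1)$ be the conjugate exponent and apply Hölder's inequality to obtain
$$
|g(w)| \;\leq\; \frac{1}{2\pi}\Bigl(\int_{\partial U}|g^{*}(\zeta)|^{p}\,|d\zeta|\Bigr)^{1/p}\Bigl(\int_{\partial U}\frac{|d\zeta|}{|\zeta-w|^{q}}\Bigr)^{1/q}.
$$
For $w\in K$ the second factor is bounded above by $\ell(\partial U)^{1/q}/\delta$, where $\ell(\partial U)<\infty$ is the length of $\partial U$. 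Hence
$$
|g(w)| \;\leq\; \frac{\ell(\partial U)^{1/q}}{2\pi\delta}\,\|g^{*}\|_{p},
$$
and the constant on the right depends only on $p$, $K$ and $U$ (and, through $\delta$ and $\ell(\partial U)$, not on $g$), which is what the statement requires.

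I do not expect any real obstacle here: the only slightly delicate point is to make sure we are allowed to use Theorem \ref{cauchy} for every $g\in E^{p}(U)$ (which is exactly what that theorem provides, so there is nothing to do), and to note that $\delta>0$ because $K$ is compactly contained in $U$. Everything else is Hölder's inequality and the rectifiability of $\partial U$.
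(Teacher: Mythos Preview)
Your proof is correct and follows exactly the approach indicated in the paper, which simply says that the lemma is ``a simple application of Theorem~\ref{cauchy} and H\"older's inequality.'' You have spelled out precisely those details, and your observation that the constant also depends on the fixed domain $U$ (through $\delta$ and $\ell(\partial U)$) is accurate and harmless, since $U$ is given.
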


\begin{proof}
This is a simple application of Theorem \ref{cauchy} and H\"older's inequality.
\end{proof}

We can now prove the theorem:

\begin{proof}[Proof of Theorem \ref{theonice}]
By the decomposition theorem, it suffices to show that, if $h \in E^2(D_j)$, then there exists a rational function $R_j$ with poles only at $a_j$, such that the integral
$$
\int_{\partial D}|h^{*}(z)-R_j(z)|^2 |dz|
$$
can be made arbitrarily small. Let $\epsilon>0$.

First consider the case $j=1$, so that $h \in E^2(D_1)$. Since $D_1$ is a bounded Smirnov domain with boundary $\Gamma_1$, we can apply Theorem \ref{smirn} and find a polynomial $P_1$ such that
$$
\int_{\Gamma_1}|h^{*}(z)-P_1(z)|^2 |dz| < \epsilon.
$$
Now, since the function $(h-P_1) \in E^2(D_1)$, we know by Lemma \ref{lem22} that there exists a constant $M$, depending only on the curves $\Gamma_2, \Gamma_3, \dots, \Gamma_n$, such that
$$
|h(w)-P_1(w)|^2 \leq M \int_{\Gamma_1}|h^{*}(z)-P_1(z)|^2 |dz|
< M\epsilon \qquad (w \in \Gamma_2 \cup \dots \cup \Gamma_n).
$$
Thus, we have
\begin{align*}
\int_{\partial D}|h^{*}(z)-P_1(z)|^2 |dz|
&= \int_{\Gamma_1}|h^{*}(z)-P_1(z)|^2 |dz| + \sum_{j=2}^{n}\int_{\Gamma_j}|h(z)-P_1(z)|^2 |dz| \\
&< \epsilon(1+ML),
\end{align*}
where $L$ is the sum of the lengths of the curves $\Gamma_2$, $\Gamma_3$, $\dots$, $\Gamma_n$. Since the right side can be made arbitrarily small, this concludes the proof for the case $j=1$.

Suppose now that $2 \leq j \leq n$ and let $h \in E^2(D_j)$. We can suppose $h(\infty)=0$, since it is part of the conclusion in the decomposition theorem. Translating $D_j$, we may suppose that $a_j=0$. Let $\tilde{D}_j:=\{z:1/z \in D_j\}$. Then $\tilde{D}_j$ is a bounded Jordan domain with piecewise-analytic boundary; in particular it is a Smirnov domain. Define a function $g$ in $\tilde{D}_j$ by $g(z):=h(1/z)$. It is easy to check that $g \in E^2(\tilde{D}_j)$. Also, $g$ vanishes at $0$, so the function  $g(z)/z$ also belongs to $E^2(\tilde{D}_j)$. We have, by Theorem \ref{smirn},
$$
\int_{\partial \tilde{D}_j}\Bigl|\frac{g^{*}(z)}{z}-P(z)\Bigr|^2 |dz| < \epsilon
$$
for some polynomial $P$. Making the change of variable $w=1/z$, we obtain
$$
\int_{\partial D_j}\Bigl|wh^{*}(w)-P\Bigl(\frac{1}{w}\Bigr)\Bigr|^2 \frac{1}{|w|^2}|dw| < \epsilon.
$$
Let $Q(w):=(1/w)P(1/w)$, so that $Q$ is a rational function with poles only at $0$. The above inequality becomes
$$
\int_{\partial D_j}\left|h^{*}(w)-Q(w)\right|^2|dw| < \epsilon.
$$
Now, apply Lemma \ref{lem22} once again to conclude that there is a constant $M$ depending only on the curves $\Gamma_k$ for $k \neq j$, such that
$$
\Bigl|\frac{g(z)}{z}-P(z)\Bigr|^2 \leq M \epsilon$$
for all $z \in \tilde{\Gamma}_k$, $k \neq j$, where $\tilde{\Gamma}_k:=\{w:1/w \in \Gamma_j\}$. Thus,
\begin{align*}
\int_{\partial D}|h^{*}(w)-Q(w)|^2 |dw|
&= \int_{\Gamma_j}|h^{*}(w)-Q(w)|^2 |dw| + \sum_{k \neq j}\int_{ \Gamma_k}|h(w)-Q(w)|^2 |dw| \\
&< \epsilon + \sum_{k \neq j}\int_{\tilde{\Gamma}_k}\Bigl|\frac{g(z)}{z}-P(z)\Bigr|^2 |dz| \\
&< \epsilon + ML'\epsilon = (1+ML')\epsilon,
\end{align*}
where $L'$ is the sum of the lengths of the curves $\tilde{\Gamma}_k$, $k \neq j$. Since the right side can be made arbitrarily small, this completes the proof of the theorem.
\end{proof}

\begin{remark}
In the above proof, piecewise-analyticity of the boundary is only assumed so that the domains $D_1$ and $\tilde{D}_j$ $(2 \leq j \leq n)$ are Smirnov domains. The result therefore remains true under this weaker assumption.
\end{remark}

Now, let us apply Theorem \ref{theonice} to our case: i.e. $K$ is a compact set in the plane whose complement $\Omega$ is a finitely connected domain with piecewise-analytic boundary consisting of $n$ curves. Let $D$ be the bounded domain obtained by adding an outer boundary circle, say $\Gamma_0$, with radius sufficiently large so that $K$ is contained in the interior $D_0$ of $\Gamma_0$. If $h \in E^2(\Omega)$, then clearly $h \in E^2(D)$. The decomposition theorem gives
$$
h=h_0+h_1+ \dots + h_n
$$
where each $h_j$ belongs to $E^2(D_j)$. Thus,
$$
h_0=h-h_1- \dots -h_n,
$$
and, since $h$ is holomorphic in the complement of $K$, this gives an analytic extension of $h_0$ to the entire plane, bounded near $\infty$. By Liouville's theorem, $h_0$ is constant.

Now, since $D$ is a bounded finitely connected domain with piecewise-analytic boundary, we can apply Theorem \ref{theonice} to $h$ on $D$. Since $h_0$ is constant, we can omit the pole at $\infty$. Thus, we have proved:

\begin{corollary}
Suppose that $K$ is a compact set in the plane whose complement $\Omega$ is a finitely connected domain with piecewise-analytic boundary consisting of $n$ curves. For each $1 \leq j \leq n$, fix a point $a_j$ in the interior of each component of $K$. Then the rational functions with poles in the prescribed set $\{a_1, a_2, \dots, a_n\}$ are dense in $E^2(\Omega)$.
\end{corollary}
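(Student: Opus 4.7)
The plan is to reduce the corollary to Theorem \ref{theonice} by enclosing $K$ inside a large circle and then using the Decomposition Theorem together with Liouville's theorem to dispose of the potential pole at infinity. Concretely, choose $R$ so large that $K \subset \{|z|<R\}$, let $\Gamma_0:=\{|z|=R\}$, and let $D$ be the bounded, $(n{+}1)$-connected domain whose boundary consists of $\Gamma_0$ together with the $n$ components $\Gamma_1,\ldots,\Gamma_n$ of $\partial\Omega$. Then $D$ has piecewise-analytic boundary, so Theorem \ref{theonice} is applicable. A routine check from the definition of Smirnov classes (intersecting any defining exhaustion of $\Omega$ with $\{|z|<R\}$) shows that every $h\in E^2(\Omega)$ restricts to a function in $E^2(D)$.

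Next, I would apply Theorem \ref{decomp} to $h$ on $D$ to write
\[
h = h_0 + h_1 + \cdots + h_n \qquad (z\in D),
\]
where $h_0\in E^2(D_0)$ for $D_0$ the bounded Jordan domain inside $\Gamma_0$, and $h_j\in E^2(D_j)$ with $h_j(\infty)=0$ for $1\le j\le n$, where $D_j$ is the unbounded component of $\mathbb{C}_\infty\setminus\Gamma_j$. The crucial observation is that the identity $h_0 = h - h_1-\cdots-h_n$ can be used to extend $h_0$ analytically across $\Gamma_0$: on the exterior $\{|z|>R\}$, the function $h$ is holomorphic (since that set lies in $\Omega$) and each $h_j$ is holomorphic there with $h_j(\infty)=0$. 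Hence $h_0$ extends to an entire function bounded near $\infty$, and Liouville's theorem forces $h_0$ to be a constant.

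Finally, I apply Theorem \ref{theonice} to $D$ with prescribed poles $\{\infty,a_1,\ldots,a_n\}$ (the pole at $\infty$ corresponding to the outer piece $D_0$, and $a_j$ to the inner piece $D_j$). This produces rational functions of the form $R_k = P_k + \sum_{j=1}^n R_k^{(j)}$, where $P_k$ is a polynomial and each $R_k^{(j)}$ has its only pole at $a_j$, such that $\int_{\partial D}|h^*-R_k|^2\,|dz|\to 0$. Because the decomposition in Theorem \ref{decomp} is unique, the polynomial parts $P_k$ converge in $L^2(\Gamma_0)$ to the constant $h_0$; so we may replace $P_k$ by this constant $h_0$ and absorb it into, say, $R_k^{(1)}$ (a constant is itself a rational function with no finite pole). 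Restricting the resulting $L^2$-convergence from $\partial D$ to the subset $\partial\Omega=\Gamma_1\cup\cdots\cup\Gamma_n$ yields the desired approximation by rational functions with poles only in $\{a_1,\ldots,a_n\}$.

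The main obstacle is the bookkeeping in the last step: one has to confirm that the polynomial summand coming out of Theorem \ref{theonice} is absorbed into the constant $h_0$ in the limit, so that no actual pole at $\infty$ (i.e.\ no nontrivial polynomial piece) persists in the approximating rational functions. This is not a deep point, but it is where one must be careful to match the uniqueness of the decomposition with the form of the approximants supplied by Theorem \ref{theonice}.
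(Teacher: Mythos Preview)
Your approach is essentially the paper's: enclose $K$ in a large circle to form a bounded $(n{+}1)$-connected domain $D$, observe $E^2(\Omega)\subset E^2(D)$, decompose $h=h_0+h_1+\cdots+h_n$ via Theorem~\ref{decomp}, and use Liouville to see that $h_0$ is constant.

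The only divergence is in the final step. You apply Theorem~\ref{theonice} as a black box and then try to argue that the polynomial parts $P_k$ of the approximants converge to the constant $h_0$, invoking uniqueness of the decomposition. But uniqueness alone does not give this; you would need continuity of the decomposition projections on $E^2(D)$ (true, e.g.\ via Cauchy-integral representations and the boundedness of the Szeg\H{o}/Riesz projection on the circle $\Gamma_0$, but it is extra work). The paper sidesteps this entirely: since the \emph{proof} of Theorem~\ref{theonice} approximates each $h_j$ separately, and $h_0$ is already a constant, there is simply nothing to approximate on the $D_0$-piece --- one omits the pole at $\infty$ from the outset. This avoids the bookkeeping you flag as the main obstacle.
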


\newpage

\section{Numerical examples}
\label{sec6}
In this section, we present several numerical examples to illustrate the method. All the numerical work was done with \textsc{matlab}.

\subsection{Analytic boundary}

\begin{example}
\label{ex1}
{\em Union of two disks.}

Here $K$ is the union of two disks of radius $1$ centered at $-2$ and $2$.

\begin{figure}[h!t!b]
\begin{center}
\includegraphics[width=7cm, height=7cm]{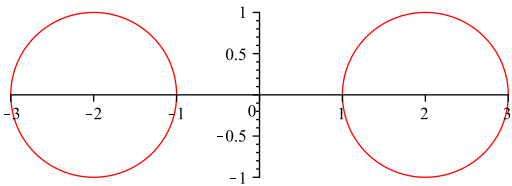}
\caption{The compact set $K$ for Example \ref{ex1}}
\end{center}
\end{figure}

A natural choice here for the approximating functions $g_j$ is to take powers of $1/(z-2)$ and $1/(z+2)$. However, we shall instead consider functions of the form
$$
g_j(z)=\frac{1}{z-a_j},
$$
where the $a_j$'s are distinct points in the interior of $K$. The reason behind this is purely numerical: with these functions, the integrals involved in the method can be calculated analytically, using the residue theorem for example. This way, we avoid the use of numerical quadrature methods, and this results in a significant gain in efficiency.

The locations of the poles $a_j$ are arbitrary. Typically, for each disk centered at $c$ with radius $r$, we put poles at the points
$$
\{c, \,c\pm r_1, \,c\pm r_1i, \,c\pm r_2, \,c\pm r_2i, \dots, c\pm r_n,\,c\pm r_ni\},
$$
where $r_1, \dots, r_n$ are equally distributed between $0$ and $r$.

Table~\ref{table1} contains the bounds for $\gamma(K)$ obtained with the method.

\begin{table}[!hbp]
\begin{center}
\caption{Lower and upper bounds for $\gamma(K)$ for Example \ref{ex1}}
\label{table1}
\begin{tabular}{|c|l|l|r|}
\hline
Poles per disk & Lower bound for $\gamma(K)$ & Upper bound for $\gamma(K)$ & Time (s) \\
\hline
$1$ & 1.875000000000000 &  1.882812500000000 & 0.003279 \\
$5$ & 1.875593064023693 &  1.875619764386366 & 0.007051 \\
$9$ &  1.875595017927203 &  1.875595038756883 & 0.012397 \\
$13$ & 1.875595019096871 &  1.875595019097141 &  0.017422 \\
$17$ & 1.875595019097112 & 1.875595019097164 & 0.027115 \\
\hline
\end{tabular}
\end{center}
\end{table}

We end this example by remarking that, in this particular case, there is a formula for $\gamma(K)$. Indeed, if
$$K= \overline{\mathbb{D}}(-c,r) \cup \overline{\mathbb{D}}(c,r),$$
where $0<r<c$, then we have the formula
\begin{equation}
\label{eqmurai}
\gamma(K)=\frac{r}{2} \Bigl( \frac{1}{\sqrt{q}}-\sqrt{q} \Bigr) \vartheta_2(q)^2.
\end{equation}
Here $\vartheta_2$ is one of the so-called \textit{Jacobi theta-functions}:
\begin{align*}
\label{eqtheta}
\vartheta_2(q)&:= \sum_{n \in \mathbb{Z}} q^{(n+1/2)^2} = 2q^{1/4} \prod_{n=1}^{\infty} (1-q^{2n})(1+q^{2n})^2,\\
\vartheta_3(q)&:= \sum_{n \in \mathbb{Z}} q^{n^2} = \prod_{n=1}^{\infty} (1-q^{2n})(1+q^{2n-1})^2,\\
\vartheta_4(q)&:= \sum_{n \in \mathbb{Z}} (-1)^n q^{n^2} = \prod_{n=1}^{\infty} (1-q^{2n})(1-q^{2n-1})^2.
\end{align*}
The argument $q$ is given by the solution in $(0,1)$ of the equation
\begin{equation*}
\label{eqmurai2}
\frac{c}{r} = \frac{1}{2} \Bigl( \frac{1}{\sqrt{q}}+\sqrt{q} \Bigr).
\end{equation*}
An easy calculation gives
$$
q=\frac{2c^2-r^2-2c\sqrt{c^2-r^2}}{r^2}.
$$
Formula (\ref{eqmurai}) is easily deduced from a formula of Murai in \cite{MUR}, by making the well-known change of variables
$$
k=\frac{\vartheta_2(q)^2}{\vartheta_3(q)^2}
$$
and using the identities relating theta-functions and elliptic integrals.

(We mention though that, in the formula for $\gamma(K)$ in \cite{MUR}, there is a factor $c$ missing, and the formula should read
$$
\gamma(K) = \frac{2}{\pi} ck F(k) \tanh \Bigl(\frac{\pi}{2} \frac{F(\sqrt{1-k^2})}{F(k)} \Bigr),
$$
where $F$ is the complete elliptic integral of the first kind.)

A simple calculation shows that formula (\ref{eqmurai}) can also be written in the form
$$
\gamma(K)= \sqrt{c^2-r^2}\vartheta_2(q)^2.
$$
Substituting $c=2$ and $r=1$ gives
$$
\gamma(K)\approx 1.8755950190971197289.
$$
Compare this with the bounds obtained in Table~\ref{table1}.
\end{example}

\newpage

\begin{example}
\label{ex2}
{\em Union of $25$ disks.}

Each disk in Figure~\ref{F:25disks} has a radius of $0.4$.

\begin{figure}[!h]
\begin{center}
\includegraphics[width=8cm, height=8cm]{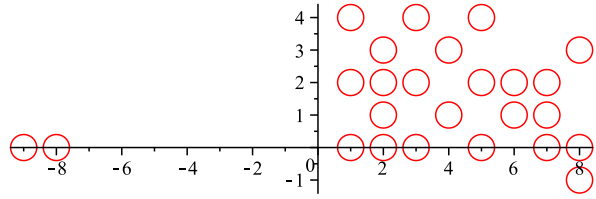}
\caption{The compact set $K$ for Example \ref{ex2}}
\label{F:25disks}
\end{center}
\end{figure}

\begin{table}[!h]
\begin{center}
\caption{Lower and upper bounds for $\gamma(K)$ for Example \ref{ex2}}
\begin{tabular}{|c|l|l|r|}
\hline
Poles per disk & Lower bound for $\gamma(K)$ & Upper bound for $\gamma(K)$ & Time (s) \\
\hline
$1$ &   4.073652478223290 &     4.219704181009330 &    0.177746 \\
$5$ &    4.148169157685863 &    4.148514554979665 &  3.702191 \\
$9$ &   4.148331342401185 &  4.148332498165111 &    11.606526 \\
$13$ &   4.148331931858607 &    4.148331938572625 &    24.848263 \\
$17$ &     4.148331934292544 &   4.148331934334756 &  41.342390 \\
\hline
\end{tabular}
\end{center}
\end{table}

\end{example}

\newpage

\begin{example}
\label{ex3}
{\em Union of four ellipses.}

Here is another example for the computation of the analytic capacity of a compact set with analytic boundary.
The compact set $K$ is composed of four ellipses centered at $-3$, $3$, $10i$, $-10i$. Each ellipse has a semi-major axis of $2$ and a semi-minor axis of $1$:

\begin{figure}[!h]
\begin{center}
\includegraphics[width=8cm, height=8cm]{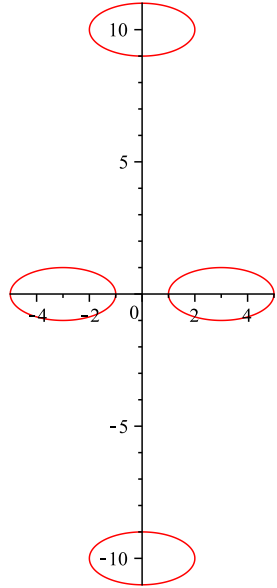}
\caption{The compact set $K$ for Example \ref{ex3}}
\end{center}
\end{figure}

\begin{table}[!h]
\caption{Lower and upper bounds for $\gamma(K)$ for Example \ref{ex3}}
\begin{center}
\begin{tabular}{|c|l|l|r|}
\hline
Poles per ellipse & Lower bound for $\gamma(K)$ & Upper bound for $\gamma(K)$ & Time (s) \\
\hline
$1$ &   4.290494449193028 &      5.652385361295098&     0.962078 \\
$5$ &   5.252560204660928 &      5.409346641724527 &    17.268477 \\
$9$ &   5.356419530523225 &     5.377445892435984 &    54.260216 \\
$13$ &   5.370292494009306 &    5.372648058950175 &  111.424592 \\
$17$ &    5.371877137036634 &     5.372044462730262&   190.042871 \\
$41$ &    5.371995432221965 &   5.371995878776166 & 1100.468881 \\
\hline
\end{tabular}
\end{center}
\end{table}

In this case, the integrals involved have to be calculated numerically. We used a recursive adaptive Simpson quadrature with an absolute error tolerance of $10^{-9}$.
\end{example}

\newpage
\subsection{Piecewise-analytic boundary}
In this subsection, we shall consider examples of compact sets $K$ whose complements $\Omega$ are finitely connected domains with piecewise-analytic boundary.

\begin{example}
\label{exsquare}
{\em The square.}

In this example, we consider the square with corners $1$, $i$, $-1$, $-i$.

\begin{figure}[!h]
\begin{center}
\includegraphics[width=5cm, height=5cm]{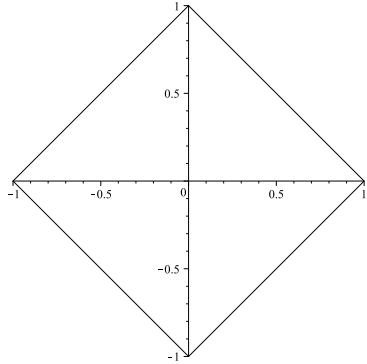}
\caption{The compact set $K$ for Example \ref{exsquare}}
\end{center}
\end{figure}

We fix an integer $n$, and then consider the approximating functions
$$
\frac{1}{z}, \frac{1}{z^2}, \dots, \frac{1}{z^{n}}.
$$
Table~\ref{Ta:square} lists the bounds obtained for different values of $n$:

\begin{table}[!h]
\begin{center}
\caption{Lower and upper bounds for $\gamma(K)$ for Example \ref{exsquare}}
\label{Ta:square}
\begin{tabular}{|c|l|l|r|}
\hline
$n$ & Lower bound for $\gamma(K)$ & Upper bound for $\gamma(K)$ & Time (s) \\
\hline
$2$ &  0.707106781186547 &     0.900316316157106 &     0.021981 \\
$3$ &   0.707106781186547 &   0.900316316157106 & 0.069278\\
$4$ &  0.707106781186547 &   0.887142803070031 &   0.109346\\
$5$ & 0.746499705182962 &   0.887142803070031 &    0.145614 \\
$6$ &   0.746499705182962 & 0.887142803070031 &  0.202309\\
$7$ &   0.746499705182962 &     0.887142803070031 &     0.295450 \\
$8$ &   0.746499705182962 &  0.881014562149127 & 0.347996\\
$9$ &   0.761941423753061 &    0.881014562149127 &    0.414684\\
$10$ &  0.761941423753061 &  0.881014562149127 &     0.595552 \\
$15$ &  0.770723484232218 &  0.877175902241141 &      2.425285 \\
$20$ &  0.776589045256849 &  0.872341829081944 &     5.537981 \\
$25$ &  0.784189460107018 &   0.870656623669828 &      10.002786 \\
$30$ &  0.786857803378602 &   0.869257904380382 &      16.344379 \\
$35$ &  0.789068961951613 &    0.868068649269412 &      26.109797 \\
$40$ &   0.790942498354322 &    0.866133165258689 &      33.595790 \\
\hline
\end{tabular}
\end{center}
\end{table}

\end{example}
We immediately see that the convergence is very slow, as opposed to the results obtained in the case of compact sets with analytic boundaries.
The main issue here is that we do not consider the geometric nature of the boundary. In order to accelerate convergence, our choice of approximating functions should take into account the different points where the boundary fails to be smooth.

In view of Theorems \ref{propp2} and \ref{est}, the functions that we want to approximate are
$$
q(z)=c \sqrt{(\tilde{\psi} \circ F)(z)} \sqrt{F'(z)}
$$
and
$$
f(z)q(z)=c f(z) \sqrt{(\tilde{\psi} \circ F)(z)} \sqrt{F'(z)}
$$
for some constant $c$, where $F$ is a conformal map of $\Omega$ onto $\mathbb{C}_{\infty} \setminus \overline{\mathbb{D}}$, with $F(\infty)=\infty$.
By Theorems \ref{Theo1} and \ref{Theo2}, we know that the functions $f$ and $\sqrt{(\tilde{\psi} \circ F)}$ are continuous up to the boundary. These functions can thus be approximated by rational functions with poles inside the square.
All that remains is to add approximating functions that behave like $\sqrt{F'}$ at the corners. If $a$ is one of the corner in the boundary, then $F$ should, in some sense, straighten out the angle from $3 \pi /2$ to $\pi$, that is $F(z)$ must behave like $$(z-a)^{2/3}$$ near $a$. Differentiating and then taking square root, we find that $(z-a)^{-1/6}$ should be, up to a multiplicative constant, a good approximation to $\sqrt{F'(z)}$ near $a$. Since we want functions that are holomorphic near $\infty$, we shall instead consider
$$
\Bigl( \frac{z-a}{z} \Bigr)^{-1/6}.
$$
In view of all of the above, we propose the following method for the computation of $\gamma(K)$:

Fix an integer $n$. Then add the approximating functions
$$
\frac{f_j(z)}{z^k}
$$
for $j=0,1,2,3,4$ and $k=1,2, \dots, n$, where $a_1,a_2,a_3,a_4$ are the corners of the square,
$$
f_0(z):=1
$$
and
$$
f_j(z):= \Bigl( \frac{z-a_j}{z} \Bigr)^{-1/6} \qquad (j=1,2,3,4).
$$

We use this method to recompute the analytic capacity of the square of Example \ref{exsquare}.
The convergence is significantly faster.

\begin{table}[!h]
\label{tab1}
\begin{center}
\caption{Lower and upper bounds for $\gamma(K)$}
\begin{tabular}{|c|l|l|r|}
\hline
$n$  & Lower bound for $\gamma(K)$ & Upper bound for $\gamma(K)$ & Time (s) \\
\hline
$2$  & 0.834566926465074 &    0.835066810881929 &     1.334885 \\
$3$  &   0.834609482283050 & 0.834678782816948 & 2.918624\\
$4$  &  0.834622127643984 &  0.834628966618492 &  5.220941\\
$5$  & 0.834626255962448 &  0.834627566559480 &   8.022274 \\
$6$ &  0.834626584020641 & 0.834627152182154 &  11.542859\\
\hline
\end{tabular}
\end{center}
\end{table}

We remark that, in this case, the answer can be calculated exactly.
Indeed, since $K$ is connected, we have that
$$
\gamma(K)
= \operatorname{cap}(K)
= \sqrt{2} \frac{\Gamma(1/4)^2}{4\pi^{3/2}} \approx 0.83462684167407318630,
$$
where $\operatorname{cap}(K)$ is the logarithmic capacity of $K$.

Our method can easily be adapted to other compact sets with piecewise-analytic boundary. Indeed, suppose that $K$ is a compact set whose boundary consists of $m$ piecewise-analytic curves, say $\gamma_1, \dots, \gamma_m$. First, fix a point $c$ in the interior of $\gamma_1$, and let $a_1, a_2, \dots, a_N$ be the different points in $\gamma_1$ where the curve fails to be smooth. Suppose that $\gamma_1$ makes an exterior angle of $\alpha_j$ at the point $a_j$, where $0< \alpha_j < 2\pi$. Then we add the following approximating functions:
$$\frac{f_j(z)}{(z-c)^k}$$
for $j=0,1,2, \dots, N$ and $k=1,2, \dots, n$, where
$$
f_0(z):=1
$$
and
$$
f_j(z):= \Bigl( \frac{z-a_j}{z-c} \Bigr)^{(1/2)(\pi/\alpha_j -1)} \qquad (j=1,2, \dots, N).
$$
All that remains is to repeat the procedure for the other curves.

Here is an illustrative example.

\newpage

\begin{example}
\label{ex4}
{\em Union of two squares, one equilateral triangle and one rectangle.}

\begin{figure}[!h]
\begin{center}
\includegraphics[width=7cm, height=7cm]{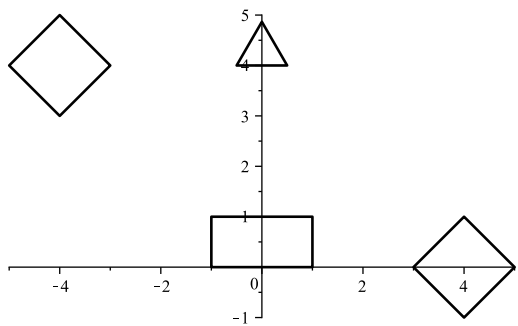}
\caption{The compact set $K$ for Example \ref{ex4} }
\end{center}
\end{figure}

\begin{table}[!h]
\begin{center}
\caption{Lower and upper bounds for $\gamma(K)$ for Example \ref{ex4}}
\begin{tabular}{|c|l|l|r|}
\hline
$n$  & Lower bound for $\gamma(K)$ & Upper bound for $\gamma(K)$ & Time (s) \\
\hline
$1$ &  2.688593215018632 &     2.724269900679792 &       10.371944 \\
$2$ &     2.693483826380926 &    2.695819902453329 &  32.881242\\
$3$ &  2.693867645864377&    2.694261483861710 &   71.562216\\
$4$ &   2.693961062687599 &   2.694025016036611 &    122.607285 \\
$5$ &     2.693971930182724 &   2.693982653270314&   184.203252\\
\hline
\end{tabular}
\end{center}
\end{table}
\end{example}

\newpage

\begin{example}
\label{ex5}
{\em Union of a disk and two semi-disks}

Our last example is a non-polygonal compact set with piecewise-analytic boundary. It is a typical example of the kind of geometry that often arises in applied mathematics, featuring smoothness of the boundary with the exception of a few singularities.

\begin{figure}[!h]
\begin{center}
\includegraphics[width=7cm, height=7cm]{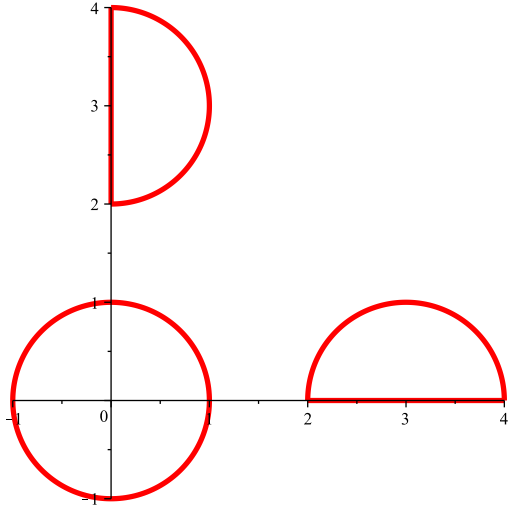}
\caption{The compact set $K$ for Example \ref{ex5}}
\end{center}
\end{figure}

The compact set $K$ is composed of the unit disk and two half-unit-disks centered at $3$ and $3i$.

\begin{table}[h]
\begin{center}
\caption{Lower and upper bounds for $\gamma(K)$ for Example \ref{ex5}}
\begin{tabular}{|c|l|l|r|}
\hline
$n$ & Lower bound for $\gamma(K)$ & Upper bound for $\gamma(K)$ & Time (s) \\
\hline
$2$ & 2.118603690751346 &    2.123888275897654 &     2.546965 \\
$3$  &    2.120521869940459 & 2.121230615594293 &  4.926440\\
$4$  &   2.120666182274863 &  2.120803766391281 &  9.488024 \\
$5$  &  2.120694837101383 &   2.120716977856280 &   13.679742 \\
$6$  &   2.120703235395670 &  2.120709388805280&  22.344576\\
$7$  &   2.120704581010457 & 2.120707633546616 &  28.953791\\
$8$  &   2.120705081159854 &  2.120706704970516&  34.781046\\
\hline
\end{tabular}
\end{center}
\end{table}

The above compact set was considered in \cite{ROS} and then in \cite{RANS}, for the computation of logarithmic capacity. It was shown that $\operatorname{cap}(K) \in [2.19699, 2.19881]$. Our results are thus consistent with the well-known inequality
$$
\gamma(K) \leq \operatorname{cap}(K).
$$
\end{example}

Before leaving this section, a brief remark is in order. Comparing the results of Subsection $6.2$ with Subsection $6.1$, we see that the convergence is quite a bit slower in the case of piecewise-analytic boundary, compared to the case of analytic boundary. In fact, it is known that one \textit{cannot} hope for similar convergence in both cases. This is related to the fact that if the boundary curves are piecewise-analytic but not analytic, then the extremal functions in Theorem \ref{est} do not extend analytically across the boundary.

\section{The subadditivity problem for analytic capacity}
\label{sec7}
This section is about the study of the following question:
\emph{Is it true that
\begin{equation}
\label{eqsub}
\gamma(E \cup F) \leq \gamma(E) + \gamma(F)
\end{equation}
for all compact sets $E,F$?}

Suita \cite{SUI} proved that (\ref{eqsub}) holds if $E,F$ are disjoint \textit{connected} compact sets.

One of the main obstacles in the study of inequality (\ref{eqsub}) is that it is difficult in practice to determine the analytic capacity of a given compact set. However, the numerical examples in the last section show that our method is very efficient when the compact in question is a finite union of disjoint disks. Fortunately, this particular case is sufficient:

\begin{theorem}
\label{theoo}
The following are equivalent:
\begin{enumerate}[\rm(i)]
\item  $\gamma(E \cup F) \leq \gamma(E) + \gamma(F)$ for all compact sets $E, F$.
\item $\gamma(E \cup F) \leq \gamma(E) + \gamma(F)$ for all disjoint compact sets $E, F$ that are finite unions of disjoint closed disks, all with the same radius.
\end{enumerate}
\end{theorem}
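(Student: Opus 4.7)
The direction (i) $\Rightarrow$ (ii) is trivial. For (ii) $\Rightarrow$ (i), my plan is to approximate arbitrary compact sets $E, F$ by discretizations of the form admitted by (ii), using Melnikov's discrete approach to analytic capacity.

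I would first reduce to the case $E\cap F=\emptyset$, by taking the closed $2\epsilon$-neighborhoods $E_\epsilon:=\{z:d(z,E)\le 2\epsilon\}$ and $F_\epsilon:=F\cap\{z:d(z,E)\ge 2\epsilon\}$; these are compact, cover $E\cup F$, and (apart from a negligible overlap on the level curve $\{d(\cdot,E)=2\epsilon\}$, which can be removed by an arbitrarily small perturbation of $\epsilon$) are disjoint. As $\epsilon\rightarrow 0^+$ they decrease to $E$, $F\setminus E$, and $E\cup F$ respectively, so outer regularity gives $\gamma(E_\epsilon)\rightarrow\gamma(E)$, $\gamma(F_\epsilon)\rightarrow\gamma(F\setminus E)\le\gamma(F)$, and $\gamma(E_\epsilon\cup F_\epsilon)\rightarrow\gamma(E\cup F)$. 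So it suffices to prove (i) under the assumption $E\cap F=\emptyset$, with $\delta:=d(E,F)>0$.

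For disjoint $E,F$, fix small $r>0$ and let $\Lambda_r:=(2r+r^2)\mathbb{Z}^2$, a square lattice whose points are strictly more than $2r$ apart. Define
\[
\tilde E_r:=\bigcup\{\overline{D}(z,r):z\in\Lambda_r,\,d(z,E)\le r\},
\]
and $\tilde F_r$ analogously. By the spacing of $\Lambda_r$, the disks of radius $r$ centered at distinct lattice points are pairwise disjoint; moreover $\tilde E_r\subseteq E^{(2r)}$ and $\tilde F_r\subseteq F^{(2r)}$, so $\tilde E_r\cap\tilde F_r=\emptyset$ whenever $r<\delta/4$. Thus each of $\tilde E_r,\tilde F_r$ is a finite union of disjoint closed disks of the common radius $r$, and the two are disjoint, so hypothesis (ii) gives
\[
\gamma(\tilde E_r\cup\tilde F_r)\le\gamma(\tilde E_r)+\gamma(\tilde F_r).
\]
Note also that by construction $\tilde E_r\cup\tilde F_r$ is exactly the analogous lattice discretization of $E\cup F$.

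The proof concludes by sending $r\rightarrow 0^+$. The inclusion $\tilde E_r\subseteq E^{(2r)}$ combined with outer regularity gives $\limsup_r\gamma(\tilde E_r)\le\gamma(E)$, and likewise for $F$ and $E\cup F$. The matching lower bounds such as $\liminf_r\gamma(\tilde E_r)\ge\gamma(E)$ cannot be deduced from outer regularity, because $\tilde E_r$ does not in general contain $E$; this is where I expect the main obstacle to lie. These lower bounds are precisely the content of Melnikov's theorem on discrete analytic capacity \cite{MEL}, which asserts convergence of such lattice-based approximations to the capacity of the underlying compact set. Granted this result, the chain
\[
\gamma(E\cup F)=\lim_{r\rightarrow 0^+}\gamma(\tilde E_r\cup\tilde F_r)\le\lim_{r\rightarrow 0^+}\bigl(\gamma(\tilde E_r)+\gamma(\tilde F_r)\bigr)=\gamma(E)+\gamma(F)
\]
completes the argument.
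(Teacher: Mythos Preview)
Your strategy is sound in spirit, but the crucial step is misattributed. Melnikov's lemma in \cite{MEL} is \emph{existential}: given a compact $K$ and $\delta,\epsilon>0$, it produces \emph{some} centers $z_1,\dots,z_n\in K_\delta$ and \emph{some} radius $r<\delta$ (extracted, roughly, from the Cauchy transform of an almost-extremal function) with $\gamma\bigl(\bigcup_j\overline{\mathbb D}(z_j,r)\bigr)\ge(1-\epsilon)\gamma(K)$. It does \emph{not} assert that your particular lattice discretization $\tilde K_r$ satisfies $\liminf_{r\to0}\gamma(\tilde K_r)\ge\gamma(K)$; that statement is not in \cite{MEL}, and I do not know a reference for it. Since that lower bound is exactly the step you flag as the ``main obstacle,'' the proof as written has a genuine gap.

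The paper sidesteps this by organizing the argument differently. It applies Melnikov's lemma \emph{once}, to $E\cup F$, obtaining disks $\overline{\mathbb D}(z_j,r)$ whose union $K(Z,R)$ has $\gamma(K(Z,R))\ge\gamma(E\cup F)-\epsilon/3$. It then \emph{partitions} these very disks: letting $w_j$ be a nearest point of $E\cup F$ to $z_j$, put into $A$ those disks with $w_j\in E$ and into $B$ those with $w_j\in F\setminus E$. Then $A\subseteq E_{2\delta}$ and $B\subseteq F_{2\delta}$, so $\gamma(A)$ and $\gamma(B)$ are controlled from \emph{above} by outer regularity (the easy direction), while $\gamma(A\cup B)=\gamma(K(Z,R))$ is controlled from \emph{below} by the single invocation of Melnikov. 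Hypothesis~(ii) applied to the pair $(A,B)$ then yields the contradiction. Note also that this makes your preliminary reduction to disjoint $E,F$ unnecessary: the split into $w_j\in E$ versus $w_j\in F\setminus E$ already handles overlap. (Incidentally, in your reduction $F_\epsilon$ \emph{increases} rather than decreases as $\epsilon\to0^+$; the bound $\gamma(F_\epsilon)\le\gamma(F)$ still holds by monotonicity since $F_\epsilon\subseteq F$, so this is harmless, but worth stating correctly.)
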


Clearly (i) implies (ii), but the fact that the converse holds is nontrivial. For the proof of Theorem \ref{theoo}, we need a discrete approach to analytic capacity introduced by Melnikov \cite{MEL}.

\subsection{Melnikov's discrete approach to Analytic Capacity}

Let $z_1, z_2, \dots, z_n \in \mathbb{C}$ and let $r_1, r_2, \dots, r_n$ be positive real numbers. Define $Z:= (z_1, z_2, \dots, z_n)$ and $R:=(r_1, r_2, \dots, r_n)$. Suppose in addition that $|z_j-z_k|>r_j+r_k$ for $j \neq k$, so that the closed disks $\overline{\mathbb{D}}(z_j,r_j)$ are pairwise disjoint. Set
$$
K(Z,R):=\bigcup_{j=1}^{n}\overline{\mathbb{D}}(z_j,r_j)
$$
and let
$$
\mu_1=\mu_1(Z,R):=\sup \Bigl\{ \Bigl| \sum_{j=1}^n a_j \Bigr| \Bigr\}
$$
where the supremum is taken over all points $a_1, \dots, a_n \in \mathbb{C}$ such that
$$
\Bigl| \sum_{j=1}^n \frac{a_j}{z-z_j} \Bigr| \leq 1 \qquad (z \in \mathbb{C} \setminus K(Z,R)).
$$
Clearly, we have $\mu_1 \leq \gamma(K(Z,R))$.

Finally, for any compact set $K \subseteq \mathbb{C}$ and $\delta>0$, we write $K_{\delta}$ for the closed $\delta-$neighborhood of $K$.

The following lemma is precisely what we need to prove Theorem \ref{theoo}:
\begin{lemma}
\label{meln}
Let $K \subseteq \mathbb{C}$ compact, and let $\delta, \epsilon > 0$. Then there exist $z_1, \dots, z_n \in K_\delta$ and $0<r<\delta$ such that $|z_j - z_k| > 2r$ for $j \neq k$, and
$$
\mu_1(Z,R) \geq (1-\epsilon) \gamma(K)
$$
where $Z=(z_1, \dots, z_n)$ and $R=(r, \dots, r).$ In particular,
$$
\gamma(K(Z,R)) \geq (1-\epsilon)\gamma(K).
$$
\end{lemma}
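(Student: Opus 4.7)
My strategy is to build a rational function $R(z)=\sum_{j=1}^n \alpha_j/(z-z_j)$ with simple poles at a fine net in $K$ that approximates the Ahlfors function of (a slight thickening of) $K$ well on $\mathbb{C}_\infty\setminus K(Z,R)$; after rescaling by its supremum, $R$ becomes admissible in the definition of $\mu_1(Z,R)$, so that $|\sum_j\alpha_j|$ realises almost all of $\gamma(K)$.

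By outer regularity of $\gamma$ I pick $\eta\in(0,\delta/2)$ with $\gamma(K_\eta)\le(1+\epsilon/3)\gamma(K)$; after a further smoothing I may assume $K_\eta$ has analytic boundary. Let $f$ be the Ahlfors function of $K_\eta$, which by Theorem~\ref{Theo1} is continuous up to $\partial K_\eta$ with $|f|\equiv 1$ there, $f(\infty)=0$, and $f'(\infty)=\gamma(K_\eta)\ge\gamma(K)$. Fix $r\in(0,\eta/10)$ (to be chosen) and take a maximal $3r$-separated set $z_1,\dots,z_n\in K$: the closed disks $\overline{\mathbb{D}}(z_j,r)$ are pairwise disjoint, contained in $K_r\subset K_\delta$, and maximality gives $K\subset\bigcup_j\mathbb{D}(z_j,3r)$.

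To produce $R$, apply Vitushkin's localisation with a $C^\infty$ partition of unity $\{\phi_j\}$ subordinate to $\{\mathbb{D}(z_j,6r)\}$ and summing to $1$ on $K_{3r}$:
\[
V_{\phi_j}f(z):=\phi_j(z)f(z)-\frac{1}{\pi}\int\frac{f(w)\,\bar\partial\phi_j(w)}{w-z}\,dm(w).
\]
Each $V_{\phi_j}f$ is bounded holomorphic off $\mathrm{supp}(\phi_j)\cap K_\eta$ with $\|V_{\phi_j}f\|_\infty\le C$; $\sum_j V_{\phi_j}f=f$ on $\mathbb{C}_\infty\setminus K_\eta$; and at infinity $V_{\phi_j}f(z)=\alpha_j/(z-z_j)+O(1/(z-z_j)^2)$ with $|\alpha_j|\le Cr$ (since the analytic capacity of a set of diameter of order $r$ is of order $r$). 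Set $R(z):=\sum_j\alpha_j/(z-z_j)$, so $R'(\infty)=\sum_j\alpha_j=f'(\infty)=\gamma(K_\eta)\ge\gamma(K)$. By the maximum principle on $\mathbb{C}_\infty\setminus K(Z,R)$, $\sup|R|$ is attained on $\bigcup_k\partial\mathbb{D}(z_k,r)$; on $\partial\mathbb{D}(z_k,r)$ the pole term contributes $|\alpha_k|/r\le C$, and the tail $\sum_{j\ne k}\alpha_j/(\zeta-z_j)$ is compared with the bounded quantity $f(\zeta)-V_{\phi_k}f(\zeta)$ by replacing each $\alpha_j/(\zeta-z_j)$ with $V_{\phi_j}f(\zeta)$ up to $O(r^2/|\zeta-z_j|^2)$. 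With $r$ small and the partition refined, $M:=\sup_{\mathbb{C}\setminus K(Z,R)}|R|\le 1+\epsilon/3$, so $R/M$ is admissible for $\mu_1(Z,R)$, yielding
\[
\mu_1(Z,R)\ge\frac{|\sum_j\alpha_j|}{M}\ge\frac{\gamma(K)}{1+\epsilon/3}\ge(1-\epsilon)\gamma(K).
\]
The ``in particular'' clause follows from the inequality $\mu_1\le\gamma(K(Z,R))$ already noted in the text.

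The hardest part, I expect, is the uniform control of $|R|$ on $\partial K(Z,R)$: this boundary lies inside the thickening $K_\delta$, where the approximation $R\approx f$ has no a priori validity, and one needs the Vitushkin tails to cancel honestly against $f-V_{\phi_k}f$ to leading order in $r$. Calibrating the smoothness of $\phi_j$, the overlaps, and the net scale $r$ so that this cancellation is genuine is the technical heart of Melnikov's argument.
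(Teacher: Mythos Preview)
The paper does not prove this lemma; it simply cites \cite[Lemma~1]{MEL}, so there is no argument in the paper to compare against.

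Your outline is in the right spirit---discretize the Ahlfors function of a smooth neighbourhood into a simple-pole rational function $R(z)=\sum_j\alpha_j/(z-z_j)$ with $R'(\infty)=\sum_j\alpha_j\ge\gamma(K)$, then normalise by $M=\sup_{\mathbb C\setminus K(Z,R)}|R|$---but the step you yourself flag as hardest is a genuine gap, and the construction you have written down cannot close it. With the $z_j$ chosen as a maximal $3r$-net in the two-dimensional set $K$ and Vitushkin pieces supported in $6r$-balls, every term in your estimate on $\partial\mathbb D(z_k,r)$ is scale-invariant or worse: the pole contribution $|\alpha_k|/r\le C$ is already an absolute constant, not something close to $1$; the replacement errors $\sum_{j\ne k}O(r^2/|\zeta-z_j|^2)$ over a $3r$-separated planar configuration do not tend to $0$ with $r$ (in fact that sum behaves like $\log(\mathrm{diam}\,K/r)$); and the identity $\sum_{j}V_{\phi_j}f=f$ holds only on $\mathbb C_\infty\setminus K_\eta$, whereas $\partial\mathbb D(z_k,r)\subset K_r\subset K_\eta$, so $f(\zeta)$ is undefined exactly where you invoke it. Thus ``$r$ small and the partition refined'' does not force $M\le1+\epsilon/3$; your scheme yields only $\mu_1(Z,R)\ge\gamma(K)/C$ for some absolute $C$, which is strictly weaker than the lemma. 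Obtaining the factor $1-\epsilon$ is not a matter of calibrating the Vitushkin data; Melnikov's construction is organised differently, with the centres placed along the one-dimensional boundary of the analytic neighbourhood and the Cauchy integral of $f$ discretized there, and it is that structure that allows the supremum to be driven down to~$1$.
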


\begin{proof}
See \cite[Lemma 1]{MEL}.
\end{proof}

We can now prove the implication (ii)$\Rightarrow$(i) in Theorem \ref{theoo}:

\begin{proof}
Suppose that  (i) does not hold, so there exist compact sets $E,F$ with
$$
\gamma(E \cup F) > \gamma(E) + \gamma(F).
$$
Let $0<\epsilon < \gamma(E \cup F) - \gamma(E) - \gamma(F)$. Take $\delta >0$ sufficiently small so that
\begin{equation}
\label{eq1}
\gamma(E_{2\delta}) < \gamma(E) + \epsilon/3,
\end{equation}
and
\begin{equation}
\label{eq2}
\gamma(F_{2\delta}) < \gamma(F) + \epsilon/3.
\end{equation}
By Lemma \ref{meln}, there exist $z_1, z_2, \dots, z_n \in (E \cup F)_{\delta}$ and $0<r<\delta$ such that
$$
\gamma(K(Z,R)) \geq \gamma(E \cup F) - \epsilon/3
$$
and
the disks $\overline{\mathbb{D}}(z_j,r)$ are pairwise disjoint. For each $j \in \{1,2, \dots, n\}$, fix $w_j \in E \cup F$ with $|z_j-w_j|=\operatorname{dist}(z_j,E \cup F) \leq \delta$. Let $A$ be the union of the disks $\overline{\mathbb{D}}(z_j,r)$ with $w_j \in E$, and let $B$ be the union of the disks $\overline{\mathbb{D}}(z_k,r)$ with $w_k \in F \setminus E$.
Then $A \subseteq E_{2\delta}$ and $B \subseteq F_{2\delta}$. Since
$$
\gamma(A \cup B) \geq \gamma(E \cup F) - \epsilon/3,
$$
we have
\begin{align*}
\gamma(A \cup B) &\geq \gamma(E \cup F) - \epsilon/3\\
&> \gamma(E) + \gamma(F) + \epsilon - \epsilon/3\\
&= \gamma(E) + \epsilon/3 + \gamma(F) + \epsilon/3\\
&> \gamma(A) + \gamma(B),
\end{align*}
where we used equations (\ref{eq1}) and (\ref{eq2}). Therefore (ii) fails to hold.
\end{proof}

\subsection{Discrete Analytic Capacity}

For $Z=(z_1, \dots, z_n) \in \mathbb{C}^n$, where $z_j \neq z_k $ for all $j \neq k$, and $r>0$, define
$$
\gamma(Z,r):=\gamma\Bigl(\bigcup_{j=1}^{n}\overline{\mathbb{D}}(z_j,r)\Bigr).
$$
Assume in addition that the discs $\overline{\mathbb{D}}(z_j,r)$ are pairwise disjoint.
By Theorem \ref{theoo}, the subadditivity of analytic capacity is equivalent to
$$
\gamma(Z,r) \leq \gamma(Z',r) + \gamma(Z'',r)
$$
for all $z_1, \dots, z_n \in \mathbb{C}$
and all $m \in \{1,2, \dots, n-1\}$,
where $Z=(z_1, \dots, z_n)$, $Z'=(z_1, \dots, z_m)$ and $Z'' = (z_{m+1}, \dots, z_n)$.
The above inequality can be written as
$$
R(Z,r,m) \leq 1,
$$
where
$$
R(Z,r,m):= \frac{\gamma(Z,r)}{\gamma(Z',r) + \gamma(Z'',r)}.
$$

The above shows the importance of studying the quantity $R(Z,r,m)$.
In this subsection, our objective is to obtain the following asymptotic expression for $R(Z,r,m)$:

\begin{theorem}
\label{theoratio}
Fix $z_1, z_2, \dots, z_n \in \mathbb{C}$ and fix $m \in \{1,2, \dots, n-1\}$. Then
$$
R(Z,r,m)=1-Cr^2+O(r^3)
$$
as $r \rightarrow 0$, where $C$ is a strictly positive constant depending only on $m,n$ and $z_1, z_2, \dots, z_n$.
\end{theorem}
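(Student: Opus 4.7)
The plan is to establish the asymptotic expansion
\begin{equation*}
\gamma(Z,r) = nr - r^{3}\|M\mathbf{1}\|^{2} + O(r^{4}) \quad \text{as } r \to 0,
\end{equation*}
where $M$ is the $n\times n$ Cauchy-type matrix with $M_{kj}=1/(z_{k}-z_{j})$ for $k\neq j$ and $M_{kk}=0$, and $\mathbf{1}\in\mathbb{C}^{n}$ is the all-ones vector. For the lower bound I would plug the rational ansatz $h(z)=\sum_{j=1}^{n} a_{j}/(z-z_{j})\in A_{0}(\Omega)$ into the maximum formula of Theorem~\ref{est1}. Parametrizing each circle $\partial\mathbb{D}(z_{k},r)$ by $z=z_{k}+re^{i\theta}$ and expanding $h$ into its Fourier series in $\theta$, orthogonality of the Fourier modes converts $\int_{\partial\Omega}|h|^{2}|dz|$ into a sum of squared moduli of coefficients. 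Setting $a_{j}=ru_{j}$ reduces the problem to maximizing $2r\operatorname{Re}\langle u,\mathbf{1}\rangle - r\|u\|^{2} - r^{3}\|Mu\|^{2}+O(r^{5})$; the optimum is $u=\mathbf{1}-r^{2}M^{*}M\mathbf{1}+O(r^{4})$, giving the lower bound. An analogous Fourier calculation applied to the minimum formula with the ansatz $g(z)=1+\sum_{j} b_{j}/(z-z_{j})$ (and $b_{j}=r^{2}v_{j}$) produces the matching upper bound; the two agree because $|(M^{*}\mathbf{1})_{k}|=|(M\mathbf{1})_{k}|$ for every~$k$.

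Feeding this expansion into the ratio and writing $M'$, $M''$ for the analogous Cauchy matrices for $Z'$, $Z''$,
\begin{equation*}
R(Z,r,m) = \frac{nr - r^{3}\|M\mathbf{1}\|^{2} + O(r^{4})}{nr - r^{3}\bigl(\|M'\mathbf{1}'\|^{2}+\|M''\mathbf{1}''\|^{2}\bigr) + O(r^{4})} = 1 - Cr^{2} + O(r^{3}),
\end{equation*}
where $nC = \|M\mathbf{1}\|^{2} - \|M'\mathbf{1}'\|^{2} - \|M''\mathbf{1}''\|^{2}$. To prove $C>0$ I would invoke Melnikov's identity for the Menger curvature: for any three distinct $w_{1},w_{2},w_{3}\in\mathbb{C}$,
\begin{equation*}
\sum_{\sigma\in S_{3}}\frac{1}{(w_{\sigma(1)}-w_{\sigma(2)})\,\overline{(w_{\sigma(1)}-w_{\sigma(3)})}} = c(w_{1},w_{2},w_{3})^{2} \geq 0,
\end{equation*}
where $c$ denotes Menger curvature.

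Writing $\|M\mathbf{1}\|^{2} = \sum_{k}\sum_{j,j'\neq k}(z_{k}-z_{j})^{-1}\overline{(z_{k}-z_{j'})}^{-1}$ and separating diagonal ($j=j'$) from off-diagonal ($j\neq j'$) contributions gives
\begin{equation*}
\|M\mathbf{1}\|^{2} = 2\sum_{j<k}\frac{1}{|z_{j}-z_{k}|^{2}} + \sum_{\{a,b,c\}} c(z_{a},z_{b},z_{c})^{2},
\end{equation*}
the second sum ranging over unordered triples of distinct indices: the six ordered triples $(k,j,j')$ with $\{k,j,j'\}=\{a,b,c\}$ are exactly the six $S_{3}$-permutations in Melnikov's identity. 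Analogous decompositions hold for $\|M'\mathbf{1}'\|^{2}$ and $\|M''\mathbf{1}''\|^{2}$ with sums restricted to the respective index sets. Subtracting, the within-group pair and triple contributions cancel and only cross-group terms survive:
\begin{equation*}
nC = 2\sum_{j\leq m<k}\frac{1}{|z_{j}-z_{k}|^{2}} + \sum_{\substack{\{a,b,c\} \\ \text{cross-group}}} c(z_{a},z_{b},z_{c})^{2} > 0,
\end{equation*}
where ``cross-group'' means $\{a,b,c\}$ is neither contained in $\{1,\ldots,m\}$ nor in $\{m+1,\ldots,n\}$. Both sums are nonnegative and the first is strictly positive because $Z'$ and $Z''$ are both nonempty (yielding at least one cross pair), so $C>0$.

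The main obstacle is the Fourier-analytic execution of Step~1: one must carefully track contributions of each angular mode to enough orders in $r$ to capture the $r^{3}$ correction precisely, and verify that the restricted rational ansatze indeed attain the asymptotically correct value. Step~3 is then essentially automatic once one recognizes that Melnikov's identity (already prominent in the paper's Section~\ref{sec7} via the discrete capacity $\mu_{1}$) organizes the off-diagonal terms into manifestly nonnegative Menger curvatures.
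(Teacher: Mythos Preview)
Your proposal is correct, and the positivity argument for $C$ is essentially identical to the paper's Lemma~\ref{lemratio} (the paper writes $\alpha=\langle CC^{*}\mathbf{1},\mathbf{1}\rangle=\|C^{*}\mathbf{1}\|^{2}$, which equals your $\|M\mathbf{1}\|^{2}$ since $(C^{*}\mathbf{1})_{k}=-\overline{(C\mathbf{1})_{k}}$). The genuine difference is in how you obtain the asymptotic $\gamma(Z,r)=nr-\alpha r^{3}+O(r^{4})$. The paper does \emph{not} expand $\gamma$ directly: it passes through Melnikov's discrete capacity $\lambda(Z,r)$, invoking the comparison inequality of Theorem~\ref{theo2} (with multiplicative errors $1+O(r^{3})$ and $1+O(r^{4})$) and the matrix bounds of Lemma~\ref{lem2} for $\lambda$, both quoted from \cite{MEL}. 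Your route instead feeds rational test functions into the two variational formulas of Theorem~\ref{est1}; since any admissible $h$ gives a lower bound and any admissible $g$ an upper bound, the matching Fourier computations sandwich $\gamma(Z,r)$ directly. This is more self-contained (no appeal to the $\lambda$-machinery of \cite{MEL}) and in fact delivers the sharper error $\gamma(Z,r)=nr-\alpha r^{3}+O(r^{5})$, hence $R=1-Cr^{2}+O(r^{4})$. The price is that the Parseval bookkeeping on each circle has to be carried out carefully to the relevant order, whereas the paper simply cites the needed inequalities.
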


For the proof of Theorem \ref{theoratio}, we need to introduce a discrete version of analytic capacity, first considered by Melnikov \cite{MEL}.

For $Z=(z_1, \dots, z_n)$ and $r>0$, define the \textit{discrete analytic capacity} $\lambda$ by
$$
\lambda(Z,r):= \sup \Bigl\{ \Bigl| \sum_{j=1}^n a_j \Bigr| ^2\Bigr\},
$$
where the supremum is taken over all $(a_1, \dots, a_n) \in \mathbb{C}^n$ such that
$$
\sum_{k=1}^n \Bigl( \frac{|a_k|^2}{r} + r \Bigl| \sum_{j \neq k} \frac{a_j}{z_k-z_j} \Bigr|^2 \Bigr) \leq 1.
$$

We also introduce the following constants:
\begin{align*}
M(Z,r)&:= r^4\sum_k \sum_{j \neq k} \frac{1}{|z_k-z_j|^4} \\
N(Z,r)&:=r\Bigl( \sum_k \sum_{j \neq k} \frac{1}{|z_k-z_j|^2} \Bigr) ^{1/2} M(Z,R)^{1/2}.
\end{align*}

Then we have

\begin{theorem}
\label{theo2}
Let $Z=(z_1, \dots, z_n) \in \mathbb{C}^n$ and $r>0$. Suppose that the discs $\overline{\mathbb{D}}(z_j, 2r)$ are pairwise disjoint. Then
$$
\frac{\gamma(Z,r)}{1+4N(Z,r)} \leq \lambda(Z,r) \leq (1+2M(Z,r))\gamma(Z,r).
$$
\end{theorem}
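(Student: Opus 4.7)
The proof splits cleanly into two halves: the upper bound $\lambda\leq(1+2M)\gamma$ and the lower bound $\gamma/(1+4N)\leq\lambda$. Both rest on a Parseval-type computation on each circle $|z-z_k|=r$, with the disjointness hypothesis $|z_k-z_j|>4r$ used to sum the resulting geometric series. Since the natural candidate $f(z):=\sum_ja_j/(z-z_j)$ has poles at the centres $z_k$ (so is not in $A(\Omega)$), I would work throughout in the $E^2(\Omega)$ framework of Section~\ref{sec4}, relying on the extremal characterizations of Theorem~\ref{est} and the generalized Cauchy formula of Corollary~\ref{cor1}.

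For the upper bound I would take any $(a_j)$ admissible for $\lambda$, form $f\in E^2(\Omega)$ as above (so $f(\infty)=0$, $f'(\infty)=\sum_ja_j$), and write $f(z)=a_k/(z-z_k)+h_k(z)$ with $h_k(z)=\sum_{j\neq k}a_j/(z-z_j)$ holomorphic near $z_k$. Fourier orthogonality on each circle $|z-z_k|=r$ yields the exact identity
$$\frac{1}{2\pi}\int_{\partial\Omega}|f|^2\,|dz|=\sum_k\Bigl[\tfrac{|a_k|^2}{r}+r|h_k(z_k)|^2\Bigr]+\sum_kr\sum_{n\geq1}\Bigl|\tfrac{h_k^{(n)}(z_k)}{n!}\Bigr|^2r^{2n}.$$
The bracket is precisely the $\lambda$-constraint, hence $\leq1$; Cauchy--Schwarz on $h_k^{(n)}(z_k)/n!=(-1)^n\sum_{j\neq k}a_j/(z_k-z_j)^{n+1}$ using $\sum_j|a_j|^2\leq r$, followed by a geometric sum in $n$, bounds the tail by $2M$. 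Pairing $f$ with the first extremal $g\in E^2(\Omega)$ of Theorem~\ref{est} (so $g(\infty)=1$ and $\frac{1}{2\pi}\int_{\partial\Omega}|g|^2\,|dz|=\gamma$), Corollary~\ref{cor1} gives $\sum_ja_j=(fg)'(\infty)=\frac{1}{2\pi i}\int_{\partial\Omega}f^*g^*\,d\zeta$, and Cauchy--Schwarz yields $|\sum_ja_j|^2\leq(1+2M)\gamma$. Taking the supremum over $(a_j)$ completes this half.

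For the lower bound I would take the second extremal $h_0\in E^2(\Omega)$ of Theorem~\ref{est} (so $h_0(\infty)=0$, $h_0'(\infty)=\gamma$, $\frac{1}{2\pi}\int_{\partial\Omega}|h_0|^2\,|dz|=\gamma$) and define $a_k:=\mathrm{Res}_{z_k}h_0=\frac{1}{2\pi i}\oint_{|z-z_k|=r}h_0(z)\,dz$. The global decomposition $h_0=\sum_kP_k$ into principal parts $P_k=\sum_{m\geq1}a_{k,m}/(z-z_k)^m$ (with $a_{k,1}=a_k$) gives $\sum_ka_k=h_0'(\infty)=\gamma$. Expanding $h_0(z_k+re^{i\theta})=\sum_n\beta_{k,n}e^{in\theta}$ and noting that $|a_k|^2/r$ and $r|\beta_{k,0}|^2$ are the energies of the $n=-1$ and $n=0$ Fourier modes, Parseval delivers the key joint estimate
$$\sum_k\Bigl[\tfrac{|a_k|^2}{r}+r|\beta_{k,0}|^2\Bigr]\leq\sum_kr\sum_n|\beta_{k,n}|^2=\gamma.$$
Evaluating $\sum_{j\neq k}P_j$ at $z_k$ produces $\beta_{k,0}=h_k(z_k)+\varepsilon_k$ with $\varepsilon_k:=\sum_{j\neq k}\sum_{m\geq2}a_{j,m}/(z_k-z_j)^m$, and a Cauchy--Schwarz argument using $\sum_{j,m}|a_{j,m}|^2/r^{2m-1}\leq\gamma$ together with the geometric tail $\sum_{m\geq2}r^{2m-1}/|z_k-z_j|^{2m}\leq 2r^3/|z_k-z_j|^4$ (valid because $|z_k-z_j|>4r$) bounds $\sum_kr|\varepsilon_k|^2\leq 2\gamma M$. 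Expanding $r|h_k(z_k)|^2=r|\beta_{k,0}|^2+r|\varepsilon_k|^2-2r\Re(\beta_{k,0}\overline{\varepsilon_k})$, summing over $k$, and bounding the cross term via a weighted Cauchy--Schwarz, yields $\sum_k[|a_k|^2/r+r|h_k(z_k)|^2]\leq(1+4N)\gamma$; rescaling $(a_k)$ so the constraint equals $1$ then gives $\lambda\geq\gamma^2/((1+4N)\gamma)=\gamma/(1+4N)$.

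The main obstacle is extracting the sharp constant $4N$ in this last estimate. A naive splitting $|h_k(z_k)|^2\leq2(|\beta_{k,0}|^2+|\varepsilon_k|^2)$ over-counts the leading $\gamma$ and yields only $\lambda\geq\gamma/(3+O(M))$, while a direct Cauchy--Schwarz on $\sum_kr|\beta_{k,0}\varepsilon_k|$ produces $\sqrt{M}$ rather than $N$. The correct route keeps the cross term and uses the joint Parseval bound $\sum_k[|a_k|^2/r+r|\beta_{k,0}|^2]\leq\gamma$ rather than the two separate bounds, applying a weighted Cauchy--Schwarz that introduces the factor $\bigl(\sum_{k,j:j\neq k}1/|z_k-z_j|^2\bigr)^{1/2}$ hidden in $N=r(\sum1/|z_k-z_j|^2)^{1/2}M^{1/2}$ at the appropriate step.
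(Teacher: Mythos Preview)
The paper does not supply its own proof of this theorem: it simply cites Melnikov \cite[Theorem~2]{MEL}. So there is nothing in the paper to compare your argument against, and any self-contained proof you write goes beyond what the paper does.

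On the substance of your sketch, two remarks. First, a minor point: the function $f(z)=\sum_ja_j/(z-z_j)$ \emph{does} lie in $A(\Omega)$, because the centres $z_k$ are in the interior of $K$, not in $\overline{\Omega}$; moreover the boundary consists of circles, which are analytic, so Theorem~\ref{est1} applies directly and the detour through $E^2(\Omega)$ is unnecessary. With that correction your upper-bound argument is sound.

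Second, and more seriously, your lower-bound argument is incomplete exactly where you flag the ``main obstacle.'' You obtain $\sum_k[|a_k|^2/r+r|\beta_{k,0}|^2]\leq\gamma$ and $\sum_kr|\varepsilon_k|^2\leq 2\gamma M$ cleanly, but a straight Cauchy--Schwarz on the cross term yields only $2\sqrt{2M}\,\gamma$, giving $(1+2M+2\sqrt{2M})\gamma$ rather than $(1+4N)\gamma$. Your closing paragraph asserts that a ``weighted Cauchy--Schwarz'' will insert the factor $\bigl(\sum_{k}\sum_{j\neq k}1/|z_k-z_j|^2\bigr)^{1/2}$ hidden in $N$, but you do not carry this out. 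This is not cosmetic: for fixed $Z$ one has $\sqrt{M}=O(r^2)$ while $N=O(r^3)$ as $r\to0$, and the proof of Theorem~\ref{theoratio} genuinely needs the $O(r^3)$ bound (witness the factor $(1+4Br^3)$ there). A correction of order $r^2$ would compete with the leading term $-(\delta/n)r^2$ and wreck that argument. So as written the lower half has a real gap, and you would need to either complete the weighted estimate explicitly or consult Melnikov's original argument.
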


\begin{corollary}
Let $K \subseteq \mathbb{C}$ compact, and let $\delta > 0$ and $\epsilon>0$. Then there exists $z_1, \dots, z_n \in K_\delta$ and $0<r<\delta$ such that $|z_j - z_k| > 2r$ for $j \neq k$ and
$$
|\gamma(K) - \gamma(Z,r)|<\epsilon,
$$
where $Z=(z_1, \dots, z_n)$. Furthermore, $Z$ and $r$ can be chosen so that $M(Z,r)<\epsilon$ and $N(Z,r) < \epsilon$.
\end{corollary}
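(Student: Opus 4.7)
The plan is to combine Lemma \ref{meln} with outer regularity of analytic capacity to obtain the approximation bound $|\gamma(K)-\gamma(Z,r)|<\epsilon$, and then to refine the application of Lemma \ref{meln} so as to ensure in addition that $M(Z,r)<\epsilon$ and $N(Z,r)<\epsilon$.

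First I would use outer regularity of $\gamma$ to choose $\delta_0\in(0,\delta)$ so small that $\gamma(K_{\delta_0})<\gamma(K)+\epsilon/2$. Then I would apply Lemma \ref{meln} with parameters $(\delta_0/2,\epsilon_1)$, where $\epsilon_1>0$ is a small auxiliary number to be chosen later, to obtain $z_1,\dots,z_n\in K_{\delta_0/2}\subseteq K_\delta$ and $0<r<\delta_0/2<\delta$ satisfying $|z_j-z_k|>2r$ for $j\neq k$ and $\gamma(Z,r)\geq(1-\epsilon_1)\gamma(K)$. Since $K(Z,R)\subseteq K_{\delta_0/2+r}\subseteq K_{\delta_0}$, monotonicity and the choice of $\delta_0$ give $\gamma(Z,r)\leq\gamma(K_{\delta_0})<\gamma(K)+\epsilon/2$; choosing $\epsilon_1$ so that $\epsilon_1\gamma(K)<\epsilon/2$ then yields $|\gamma(K)-\gamma(Z,r)|<\epsilon$. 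The degenerate case $\gamma(K)=0$ is handled separately by taking $n=1$, any $z_1\in K$, and $0<r<\min(\delta,\epsilon)$, which trivially makes $M=N=0$.

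Ensuring $M(Z,r)<\epsilon$ and $N(Z,r)<\epsilon$ is the subtler step, and this is what I expect to be the main obstacle. The quantities
\begin{equation*}
M(Z,r)=r^4\sum_k\sum_{j\neq k}\frac{1}{|z_j-z_k|^4}
\quad\text{and}\quad
N(Z,r)=r\Bigl(\sum_k\sum_{j\neq k}\frac{1}{|z_j-z_k|^2}\Bigr)^{1/2}M(Z,r)^{1/2}
\end{equation*}
are small whenever the ratio $r/\min_{j\neq k}|z_j-z_k|$ is small, but naively shrinking $r$ with $Z$ fixed destroys the approximation $\gamma(Z,r)\approx\gamma(K)$. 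The key is to exploit the flexibility in the application of Lemma \ref{meln}: tracing through Melnikov's construction in \cite{MEL}, the coefficients $a_j$ witnessing $\mu_1(Z,R)\geq(1-\epsilon_1)\gamma(K)$ arise from discretizing a Cauchy potential representing the Ahlfors function of $K$, so the underlying grid can be refined freely. One then chooses the grid very fine (so that $n$ is large) while taking $r$ small but not too small, in such a way that the pointwise Cauchy bound $|\sum_j a_j/(z-z_j)|\leq 1$ on $\mathbb{C}\setminus K(Z,R)$ is preserved, the sum $\sum a_j$ still approximates $\gamma(K)$, and yet $r/\min_{j\neq k}|z_j-z_k|$ is as small as we please.

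The main difficulty is thus carrying out this simultaneous refinement cleanly, balancing the grid spacing, the radius $r$, and the number of points $n$ against one another. Once this balance is achieved, the configuration $(Z,r)$ produced satisfies $|z_j-z_k|>2r$ and $z_j\in K_\delta$, $0<r<\delta$ by the first step, and $M(Z,r),N(Z,r)<\epsilon$ by the second, which is the full content of the corollary.
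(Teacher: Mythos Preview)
The paper does not actually prove this corollary: its entire proof reads ``See \cite[Theorem 2 and Corollary]{MEL}.'' So there is no argument in the paper to compare your proposal against beyond a bare citation.

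Your proposal in fact goes further than the paper. The first half---using outer regularity to trap $\gamma(Z,r)$ from above and Lemma \ref{meln} to trap it from below---is correct and is a clean way to obtain $|\gamma(K)-\gamma(Z,r)|<\epsilon$. For the second half, concerning $M(Z,r)<\epsilon$ and $N(Z,r)<\epsilon$, you correctly identify that shrinking $r$ alone is not enough and that one must revisit Melnikov's discretization to balance the grid spacing against $r$; but at that point you too are deferring to \cite{MEL}, just as the paper does. So your proposal is consistent with the paper's approach, only more explicit about the easy part and equally reliant on the cited source for the hard part.

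One small caution: your sketch for the $M,N$ bounds is genuinely only a sketch. The claim that one can refine the grid while keeping the Cauchy-potential bound and the approximation $\sum a_j\approx\gamma(K)$ intact, \emph{and} simultaneously force $r/\min_{j\ne k}|z_j-z_k|$ to be small, is exactly the content of Melnikov's argument and is not obvious from Lemma \ref{meln} alone. If you intend this as a self-contained proof rather than an annotated citation, that balancing step needs to be written out; otherwise, citing \cite[Corollary to Theorem~2]{MEL} as the paper does is the honest route.
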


\begin{proof}[Proofs]
See \cite[Theorem 2 and Corollary]{MEL}.
\end{proof}

Now, we need another expression for $\lambda(Z,r)$ which is easier to manipulate. We proceed as in \cite{MEL}.

It is not hard to show that
$$
\sum_{k=1}^n \Bigl( \frac{|a_k|^2}{r} + r \Bigl| \sum_{j \neq k} \frac{a_j}{z_k-z_j} \Bigr|^2 \Bigr)
= \langle (D_R^{-1}+B)a,a \rangle,
$$
where $a=(a_1, \dots, a_n)$, $D_R$ is the diagonal matrix with each entry of the main diagonal equal to $r$, and $B=(b_{jk})$, where
$$
b_{jk} = \sum_{m\neq j,k} \frac{r}{(z_j-z_m)(\overline{z_k-z_m})}.
$$
Here $\langle \cdot , \cdot \rangle$ is the standard scalar product in $\mathbb{C}^n$.
The matrix $B$ can be written in the form $B=C D_R C^{*}$, where $C=(c_{jk})$ is the Cauchy matrix associated with $z_1, \dots, z_n$, i.e.
$$
c_{jk} = \frac{1}{z_j-z_k} \qquad (j \neq k), \qquad c_{jj}=0.
$$
Arguing as in \cite[Lemma 3]{MEL}, we obtain
$$
\lambda(Z,r) = \langle (D_R^{-1}+C D_R C^{*})^{-1}(\textbf{1}),\textbf{1} \rangle,
$$
where $\textbf{1} = (1, 1, \dots, 1) \in \mathbb{C}^n.$

The following lemma contains estimates for the discrete analytic capacity:

\begin{lemma}
\label{lem2}
Let $Z=(z_1, \dots, z_n) \in \mathbb{C}^n$ and let $r>0$. Then
$$
nr -r^3 \langle CC^{*}(\textbf{1}),\textbf{1} \rangle \leq \lambda(Z,r)
\leq nr-r^3 \langle CC^{*}(\textbf{1}),\textbf{1} \rangle + r^5 \langle CC^{*}CC^{*}(\textbf{1}),\textbf{1} \rangle.
$$
\end{lemma}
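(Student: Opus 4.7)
The plan is to exploit the explicit formula derived just before the lemma, together with elementary operator-monotonicity inequalities for $(I+A)^{-1}$. Since $D_R = rI$, the formula
\[
\lambda(Z,r) = \langle (D_R^{-1}+CD_RC^*)^{-1}\mathbf{1},\mathbf{1}\rangle
\]
collapses to
\[
\lambda(Z,r) = \langle (r^{-1}I + rCC^*)^{-1}\mathbf{1},\mathbf{1}\rangle = r\langle (I + r^2 CC^*)^{-1}\mathbf{1},\mathbf{1}\rangle,
\]
so the problem reduces to two-sided estimates on $(I + r^2 CC^*)^{-1}$.

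Next, I would observe the elementary scalar inequalities
\[
1 - x \;\le\; \frac{1}{1+x} \;\le\; 1 - x + x^2 \qquad (x\ge 0),
\]
verified instantly by the identities $\frac{1}{1+x}-(1-x)=\frac{x^2}{1+x}$ and $(1-x+x^2)-\frac{1}{1+x}=\frac{x^3}{1+x}$. Since $CC^*$ is positive semidefinite, the operator $A:=r^2 CC^*$ is self-adjoint with spectrum in $[0,\infty)$, so by the functional calculus these scalar inequalities promote to the operator inequalities
\[
I - A \;\le\; (I+A)^{-1} \;\le\; I - A + A^2.
\]

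Substituting $A=r^2 CC^*$ (so that $A^2 = r^4 CC^*CC^*$) and pairing with the vector $\mathbf{1}$, using $\langle \mathbf{1},\mathbf{1}\rangle = n$, yields
\[
n - r^2\langle CC^*\mathbf{1},\mathbf{1}\rangle \;\le\; \langle (I+r^2CC^*)^{-1}\mathbf{1},\mathbf{1}\rangle \;\le\; n - r^2\langle CC^*\mathbf{1},\mathbf{1}\rangle + r^4\langle CC^*CC^*\mathbf{1},\mathbf{1}\rangle.
\]
Multiplying through by $r$ and invoking the identity for $\lambda(Z,r)$ above gives exactly the desired two-sided estimate.

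There isn't really a serious obstacle here; the only subtlety worth pausing on is the justification of the operator inequality $(I+A)^{-1}\le I-A+A^2$, which one must derive from the functional calculus rather than from a naive Neumann expansion (otherwise one would need $\|r^2 CC^*\|<1$, a smallness assumption we wish to avoid, and which would also sacrifice the \emph{inequalities} in favor of an equality up to a remainder). Everything else is bookkeeping.
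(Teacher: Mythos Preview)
Your argument is correct. The paper does not give its own proof of this lemma; it simply refers the reader to \cite[Lemma 4]{MEL}. Your approach---reducing $\lambda(Z,r)$ to $r\langle (I+r^2CC^*)^{-1}\mathbf{1},\mathbf{1}\rangle$ via $D_R=rI$, and then lifting the scalar inequalities $1-x\le(1+x)^{-1}\le1-x+x^2$ on $[0,\infty)$ to operator inequalities for the positive semidefinite operator $r^2CC^*$ by the functional calculus---is exactly the natural route and is presumably what Melnikov does as well. Your remark that one should invoke the functional calculus rather than a Neumann series (so that no smallness hypothesis on $r^2\|CC^*\|$ is needed) is well taken.
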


\begin{proof}
See \cite[Lemma 4]{MEL}.
\end{proof}

We shall also need the following lemma:

\begin{lemma}
\label{lemratio}
Fix $z_1, \dots, z_n \in \mathbb{C}$ and, as before, let $Z=(z_1, \dots, z_n)$, $Z'=(z_1, \dots, z_m)$ and $Z''=(z_{m+1},\dots,z_n)$.
Write $\alpha:= \langle CC^{*}(\textbf{1}),\textbf{1}\rangle$, $\alpha':=\langle C'C'^{*}(\textbf{1}),\textbf{1}\rangle$ and $\alpha'':=\langle C''C''^{*}(\textbf{1}),\textbf{1}\rangle$, where $C'$ is the Cauchy matrix associated with $Z'$, and $C''$ the Cauchy matrix associated with $Z''$. Then
$$
\alpha> \alpha' + \alpha''.
$$
\end{lemma}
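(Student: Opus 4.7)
The plan is to rewrite $\alpha$ as a sum of two manifestly non-negative quantities, one indexed by pairs and the other by triples of the points, using Melnikov's curvature identity. The strict inequality will then follow because $\alpha - \alpha' - \alpha''$ retains precisely those pair and triple contributions that ``cross'' between the two blocks, and at least one such contribution is strictly positive.

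First I would expand
\begin{equation*}
\alpha = \sum_{j=1}^{n}|S_j|^2 = \sum_{j=1}^{n}\sum_{\substack{k,l\in\{1,\ldots,n\}\setminus\{j\}}}\frac{1}{(z_j-z_k)\overline{(z_j-z_l)}},
\end{equation*}
where $S_j=\sum_{k\neq j}1/(z_j-z_k)$ is the $j$-th entry of $C\mathbf{1}$, and split the inner sum by whether $k=l$ or $k\neq l$. The diagonal part ($k=l$) collapses to $2\sum_{j<k}1/|z_j-z_k|^2$. For the off-diagonal part I would invoke Melnikov's curvature identity: for any three distinct $w_1,w_2,w_3\in\mathbb{C}$,
\begin{equation*}
\sum_{\sigma\in S_3}\frac{1}{(w_{\sigma(1)}-w_{\sigma(2)})\overline{(w_{\sigma(1)}-w_{\sigma(3)})}} = c(w_1,w_2,w_3)^2,
\end{equation*}
where $c(w_1,w_2,w_3)$ denotes the Menger curvature of the triple (the reciprocal of the circumradius of the triangle they form, or $0$ when the three points are collinear); this is a direct algebraic identity and its right-hand side is manifestly non-negative. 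Grouping the six ordered triples arising from each unordered triple $\{p,q,r\}$ gives
\begin{equation*}
\alpha = 2\sum_{1\le j<k\le n}\frac{1}{|z_j-z_k|^2} + \sum_{\{p,q,r\}\subseteq\{1,\ldots,n\}} c(z_p,z_q,z_r)^2,
\end{equation*}
and the same decomposition applies to $\alpha'$ (indices in $\{1,\ldots,m\}$) and to $\alpha''$ (indices in $\{m+1,\ldots,n\}$).

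Subtracting the last two from the first, every pair and every triple contained entirely inside a single block cancels, and what remains is
\begin{equation*}
\alpha - \alpha' - \alpha'' = 2\sum_{j\le m<k}\frac{1}{|z_j-z_k|^2} + \sum_{\substack{\{p,q,r\}\\\text{meets both blocks}}} c(z_p,z_q,z_r)^2.
\end{equation*}
Every term on the right-hand side is non-negative, and the first sum is strictly positive because the hypothesis $1\le m\le n-1$ guarantees the existence of at least one pair $(j,k)$ with $j\le m<k$. This yields $\alpha > \alpha' + \alpha''$, as required.

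The key (and really only non-routine) ingredient is Melnikov's curvature identity; once that is available, the argument reduces to straightforward bookkeeping of which pairs and triples lie inside or across the two index blocks. I would therefore expect the main obstacle in a fully self-contained write-up to be the verification of this algebraic identity, which can be done by a direct symmetrization argument or cited from the standard literature on analytic capacity.
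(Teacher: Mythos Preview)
Your argument is correct and is essentially identical to the paper's: both compute $\alpha=\|C^{*}\mathbf{1}\|^{2}$, split the resulting triple sum into a pair part $2\sum_{j<k}|z_j-z_k|^{-2}$ and a triple part identified via the Melnikov/Menger curvature identity (the paper writes it as $(4S/|z_j-z_k||z_j-z_l||z_k-z_l|)^2=1/R^2$), and then conclude by observing that the cross-block pair terms survive and are strictly positive. The only cosmetic difference is that you make the final subtraction step explicit, whereas the paper simply says ``the conclusion follows.''
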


\begin{proof}
First note that we have the following expression for $\alpha$:
\begin{align*}
\langle CC^{*}(\textbf{1}),\textbf{1}\rangle &= \langle C^{*}(\textbf{1}), C^{*}(\textbf{1}) \rangle \\
&= \sum_j \sum_k \sum_{l \neq j,k} \frac{1}{(z_k-z_l)(\overline{z_j-z_l})} \\
&= \sum_j \sum_{l \neq j} \frac{1}{|z_j-z_l|^2} + \sum_j \sum_{k \neq j} \sum_{l \neq j,k} \frac{1}{(z_k-z_l)(\overline{z_j-z_l})}.
\end{align*}
An elementary calculation shows that the second sum is equal to
$$
\sum_{j<k<l}\Bigl(\frac{4S(z_j,z_k,z_l)}{|z_j-z_k||z_j-z_l||z_k-z_l|} \Bigr)^2
= \sum_{j<k<l}\frac{1}{R(z_j,z_k,z_l)^2},
$$
where $S(z_j,z_k,z_l)$ is the area of the triangle with vertices $z_j, z_k, z_l$
and $R(z_j,z_k,z_l)$ is the radius of the circle through $z_j, z_k,z_l$
(if $z_j, z_k, z_l$ are collinear, then we set $S(z_j,z_k,z_l):=0$ and $R(z_j,z_k,z_l):=\infty$).
The conclusion follows.
\end{proof}

We can now proceed to the proof of Theorem \ref{theoratio}:

\begin{proof}
Write $M(Z,r):=Ar^4$ and $N(Z,r):=Br^3$, where $A,B$ do not depend on $r$.
Also, define
$$
\beta:=\langle CC^{*}CC^{*}(\textbf{1}),\textbf{1} \rangle,
\quad \beta':=\langle C'C'^{*}C'C'^{*}(\textbf{1}),\textbf{1} \rangle,
\quad \beta'':=\langle C''C''^{*}C''C''^{*}(\textbf{1}),\textbf{1} \rangle,
$$
where $C, C', C''$ are as in Lemma \ref{lemratio}.

By Theorem \ref{theo2} and Lemma \ref{lem2}, we have
\begin{align*}
R(Z,r,m) &\leq \frac{(1+4Br^3)(1+2Ar^4)\lambda(Z,r)}{\lambda(Z',r)+\lambda(Z'',r)}\\
&\leq (1+4Br^3)(1+2Ar^4) \frac{nr - \alpha r^3 + \beta r^5}{mr-\alpha' r^3 + (n-m)r - \alpha'' r^3} \\
&= (1+4Br^3)(1+2Ar^4) \frac{n-\alpha r^2 + \beta r^4}{n - (\alpha' + \alpha'') r^2}.
\end{align*}
Now, by Lemma \ref{lemratio}, $\alpha=\alpha'+\alpha''+\delta$ for some $\delta>0$, and thus
\begin{align*}
R(Z,r,m) &\leq (1+4Br^3)(1+2Ar^4) \frac{n-(\alpha' + \alpha'' + \delta) r^2 + \beta r^4}{n - (\alpha' + \alpha'') r^2}\\
&= (1+4Br^3)(1+2Ar^4) \Bigl( 1 - r^2 \frac{\delta }{n - (\alpha' + \alpha'') r^2} + r^4 \frac{\beta}{n - (\alpha' + \alpha'') r^2} \Bigr)\\
&= (1+4Br^3)(1+2Ar^4) \Bigl( 1 - \frac{\delta}{n} r^2 + O(r^4) \Bigr)\\
&= 1 - \frac{\delta}{n} r^2 + O(r^3),
\end{align*}
as $r \rightarrow 0$.

Proceeding similarly, we obtain the reverse inequality:
\begin{align*}
R(Z,r,m) &\geq (1+2Ar^4)^{-1}(1+4Br^3)^{-1} \frac{\lambda(Z,r)}{\lambda(Z',r)+\lambda(Z'',r)}\\
&\geq (1+2Ar^4)^{-1}(1+4Br^3)^{-1} \frac{nr-\alpha r^3}{mr-\alpha' r^3 + \beta' r^5 + (n-m)r - \alpha'' r^3 + \beta'' r^5}\\
&= (1+2Ar^4)^{-1}(1+4Br^3)^{-1} \frac{n-\alpha r^2}{n - (\alpha'+\alpha'')r^2 + (\beta'+\beta'')r^4}\\
&= (1+2Ar^4)^{-1}(1+4Br^3)^{-1} \frac{n-(\alpha'+\alpha'') r^2 - \delta r^2}{n - (\alpha'+\alpha'')r^2 + (\beta'+\beta'')r^4}\\
&= (1+2Ar^4)^{-1}(1+4Br^3)^{-1} \Bigl( 1 - \frac{\delta r^2 + (\beta'+\beta'')r^4 }{n - (\alpha'+\alpha'')r^2 + (\beta'+\beta'')r^4} \Bigr)\\
&= (1+2Ar^4)^{-1}(1+4Br^3)^{-1} \Bigl( 1 - \frac{\delta}{n} r^2 + O(r^4) \Bigr)\\
&= 1 - \frac{\delta}{n} r^2 + O(r^3)
\end{align*}
as $r \rightarrow 0$.
\end{proof}

\begin{remark}
One immediately deduces from Theorem \ref{theoratio} some interesting properties of the ratio $R(Z,r,m)$. First of all, for fixed $Z$ and $m$, the asymptotic expression implies that $R(Z,r,m) \leq 1$ for all $r$ sufficiently small. Second, we have that $R(Z,r,m) \rightarrow 1$ as $r \rightarrow 0$. Another interesting consequence of Theorem \ref{theoratio} will be discussed in the next section.
\end{remark}

\section{A conjecture related to the subadditivity problem}
\label{sec8}

\subsection{Formulation of the conjecture}
Recall that, by Theorem \ref{theoo}, the subadditivity of analytic capacity is equivalent to the subadditivity in the case of disjoint finite unions of disjoint disks, all with the same radius. For such compact sets, our method for the computation of analytic capacity is very efficient, see e.g. the numerical examples of Section \ref{sec6}. This allowed us to perform a lot of numerical experiments.

More precisely, let $Z=(z_1, \dots, z_n) \in \mathbb{C}^n$ and $m \in \{1, \dots, n-1\}$. Let $r>0$,
chosen sufficiently small so that the disks $\overline{\mathbb{D}}(z_j,r)$ are pairwise disjoint. Recall that $R(Z,r,m)$ is defined by
$$
R(Z,r,m)=\frac{\gamma(E \cup F)}{\gamma(E) + \gamma(F)},
$$
where
$$
E:=\bigcup_{j=1}^m\overline{\mathbb{D}}(z_j,r)
$$
and
$$
F:=\bigcup_{j=m+1}^n\overline{\mathbb{D}}(z_j,r).
$$
Using our numerical method, we can easily compute upper and lowers bounds for the ratio $R(Z,r,m)$.

All the numerical experiments that we have performed seem to suggest that analytic capacity is indeed subadditive, i.e.
that
$$
R(Z,r,m) \leq 1
$$
for all $Z,r,m$. More surprising though, all these experiments seem to indicate that the ratio $R(Z,r,m)$ \textit{decreases} as $r$ \textit{increases}. We formulate this as a conjecture:

\begin{conjecture}
Fix $Z=(z_1, \dots, z_n)$ and $m \in \{1, \dots, n-1\}$. Then $R(Z,r,m)$ is a decreasing function of $r$.
\end{conjecture}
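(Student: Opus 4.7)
The plan is to tackle the simplest non-trivial case, $n=2$ and $m=1$, where Murai's explicit formula from Example~\ref{ex1} makes everything computable. By homogeneity $\gamma(aK)=|a|\gamma(K)$, normalising $z_1=-c$, $z_2=c$ shows that $R(Z,r,1)$ depends only on the dimensionless parameter $t:=r/c\in(0,1)$; since $\gamma(E)=\gamma(F)=r$ here, the conjecture becomes that
\[
R(t)=\frac{\sqrt{1-t^2}\,\vartheta_2(q(t))^2}{2t},\qquad q(t)=\frac{2-t^2-2\sqrt{1-t^2}}{t^2},
\]
is a decreasing function of $t$ on $(0,1)$.

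The substitution $s=\sqrt{1-t^2}$ yields the tidy relation $q=(1-s)/(1+s)$, so that $t\mapsto q$ is a smooth, strictly increasing bijection of $(0,1)$ onto itself; after reparametrising by $q$ it suffices to show that $R$, regarded as a function of $q$, is decreasing. A short calculation gives $q'(t)=2q/(t\sqrt{1-t^2})$ and $1/\sqrt{1-t^2}=(1+q)/(1-q)$, and combining these with the logarithmic derivative of $\vartheta_2$ read off from the product expansion
\[
\vartheta_2(q)=2q^{1/4}\prod_{n\geq1}(1-q^{2n})(1+q^{2n})^2,
\]
the required monotonicity reduces to the single scalar inequality
\[
\frac{1}{4(1-q)}\ \geq\ \sum_{n=1}^\infty nq^{2n-1}\,\frac{1-3q^{2n}}{1-q^{4n}}\qquad(0<q<1).
\]

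The main obstacle will be establishing this Lambert-type inequality. Numerical checks indicate that it is rather tight for moderate values of $q$, so a proof by brute estimation seems unlikely; the natural strategy is to expand $(1-q^{4n})^{-1}$ as a geometric series, rearrange the double sum as a Lambert series $\sum_{m\geq1}a_m q^{2m-1}$, and compare coefficient-by-coefficient with $1/(4(1-q))=\tfrac14\sum_{k\geq0}q^k$, the hope being that Jacobi's triple product identity supplies the cancellation needed to absorb the negative $-3q^{2n}$ contribution. If the coefficientwise bound fails, the fallback is to change variables to the elliptic modulus $k=\vartheta_2(q)^2/\vartheta_3(q)^2$, where $q=e^{-\pi K'(k)/K(k)}$, and reformulate the inequality using Legendre's relation and the differential equation $dK/dk=(E(k)-(1-k^2)K(k))/(k(1-k^2))$ for the complete elliptic integrals $K$, $E$. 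Going beyond $n=2$ appears genuinely harder, since the complement of three or more disjoint disks is no longer doubly connected and there is no closed-form analogue of Murai's formula; a fundamentally variational argument would probably be needed.
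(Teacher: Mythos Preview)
Your reduction is correct and coincides with the paper's: after normalising the centres to $\pm c$ and changing variable to $q$, the ratio becomes exactly the paper's
\[
f(q)=\tfrac14\bigl(q^{-1/2}-q^{1/2}\bigr)\vartheta_2(q)^2=(1-q)\prod_{n\ge1}(1-q^{4n})^2(1+q^{2n})^2,
\]
and your Lambert-type inequality is algebraically equivalent to the paper's condition $qf'(q)/f(q)\le0$. So up to this point you and the paper are doing the same thing.

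The gap is everything after that. You stop at the inequality and offer two speculative strategies (coefficientwise comparison via the triple product, or a change to the elliptic modulus $k$), neither of which you carry out; and you explicitly dismiss ``brute estimation'' as unlikely to succeed. In fact brute estimation is precisely what the paper does, and it works because of an idea you are missing: the Jacobi modular identity $\vartheta_2(e^{-\pi/x})=\sqrt{x}\,\vartheta_4(e^{-\pi x})$. The paper splits $(0,1)$ into $(0,0.8]$ and $(0.8,1)$. On $(0,0.8]$ the logarithmic derivative of $f$ is bounded above by a finite truncation plus an explicit tail, and with ten terms this is negative on the whole interval. On $(0.8,1)$ the modular substitution $q=e^{-\pi/x}$ turns $f$ into $\tfrac12 x\sinh(\pi/2x)\,\vartheta_4(e^{-\pi x})^2$; the first factor is elementary and decreasing, while for $x>14$ (i.e.\ $q>0.8$) one has $\vartheta_4(e^{-\pi x})^2=1+O(e^{-\pi x})$, and a short error analysis shows this perturbation cannot overturn the monotonicity. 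Your coefficient-comparison plan, by contrast, would have to cope with the fact that the right-hand side of your inequality changes sign termwise (the $1-3q^{2n}$ factor), so a naive termwise bound fails; and the elliptic-modulus route, while not impossible, is considerably heavier than the modular trick that actually closes the argument.
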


In view of Theorem \ref{theoratio}, a proof of the above conjecture would imply that analytic capacity is subadditive.
Moreover, Theorem \ref{theoratio} also implies that, for fixed $Z$ and $m$, the above holds for all $r$ sufficiently small.
We now present several numerical experiments to illustrate the conjecture.

\newpage

\subsection{Numerical examples}

\begin{example}
\label{ex6}
For the first example, the number of disks is $n=40$, and $m=20$. The compact set $E$ is composed of the $20$ disks with bold boundaries, and $F$ is the union of the remaining disks.

\begin{figure}[!h]
\begin{center}
\includegraphics[width=6cm, height=6cm]{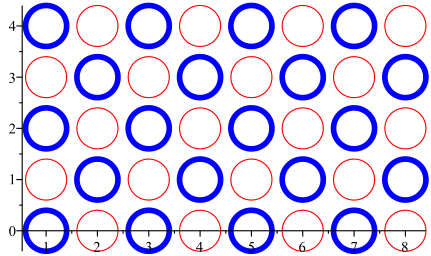}
\caption{The set $E \cup F$ for Example \ref{ex6}}
\end{center}
\end{figure}

For $500$ values of the radius $r$ equally distributed between $0$ and $0.499$, we computed lower and upper bounds for the ratio $R(z,r,m)$.
Figure~\ref{F:40disksgraph} shows the graph of the lower bound versus $r$. The graph for the upper bound is almost identical; the two graphs differ by at most $0.002481$.

\begin{figure}[!h]
\begin{center}
\includegraphics[width=7cm, height=7cm]{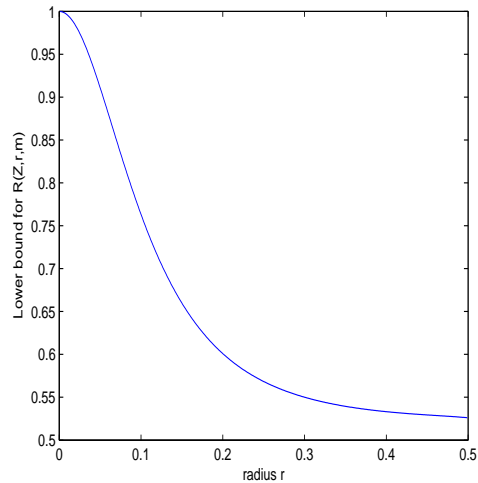}
\caption{Graph of the ratio $\gamma(E \cup F)/(\gamma(E)+\gamma(F))$ for Example \ref{ex6}}
\label{F:40disksgraph}
\end{center}
\end{figure}

\end{example}

\begin{example}
\label{ex9}
Here the disks are centered at equally spaced points in the real axis. The number of disks is $n=10$, and $m=5$.

\begin{figure}[!h]
\begin{center}
\includegraphics[width=7cm, height=7cm]{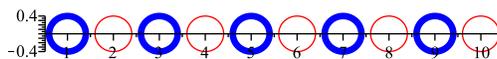}
\caption{The set $E \cup F$ for Example \ref{ex9}}
\end{center}
\end{figure}

\begin{figure}[!h]
\begin{center}
\includegraphics[width=7cm, height=7cm]{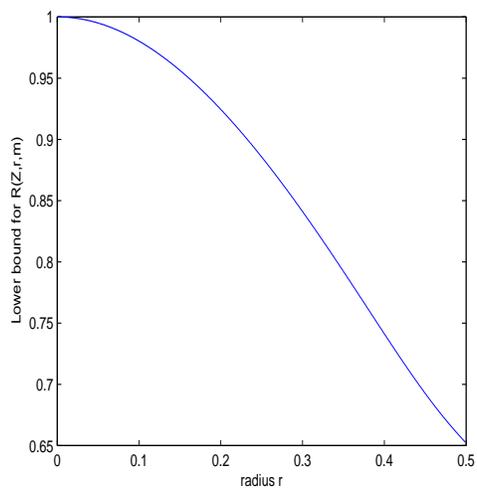}
\caption{Graph of the ratio $\gamma(E \cup F)/(\gamma(E)+\gamma(F))$ for Example \ref{ex9}}
\end{center}
\end{figure}

\end{example}

\newpage

\begin{example}
\label{ex7}
Here the disks are centered randomly. The number of disks is $n=18$, and $m=12$.

\begin{figure}[!h]
\begin{center}
\includegraphics[width=7cm, height=7cm]{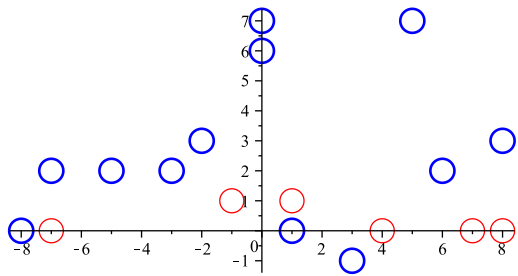}
\caption{The set $E \cup F$ for Example \ref{ex7}}
\end{center}
\end{figure}

\begin{figure}[!h]
\begin{center}
\includegraphics[width=7cm, height=7cm]{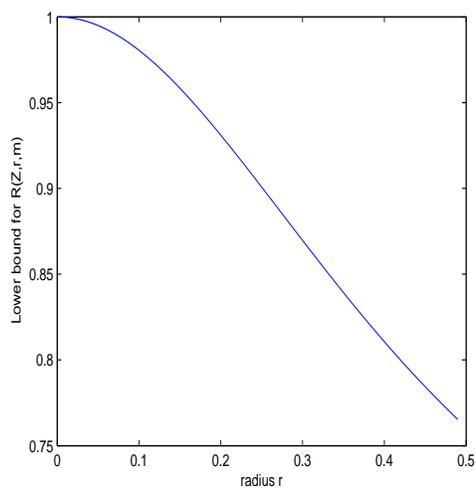}
\caption{Graph of the ratio $\gamma(E \cup F)/(\gamma(E)+\gamma(F))$ for Example \ref{ex7}}
\end{center}
\end{figure}

\end{example}

\newpage

\begin{example}
\label{ex8}
This last example shows that the situation can be different if, instead, we fix the radius of some of the disks and let the radius of the other disks vary.
Here the disks on the left are fixed, with radius $0.49$. The radius of the two disks centered at $10$ and $11$ vary simultaneously, from $0$ to $0.499$.

\begin{figure}[h!t!b]
\begin{center}
\includegraphics[width=7cm, height=7cm]{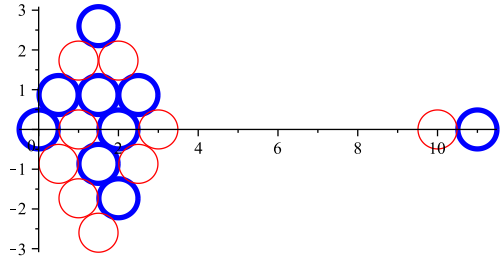}
\caption{The set $E \cup F$ for Example \ref{ex8}}
\end{center}
\end{figure}

\begin{figure}[h!t!b]
\begin{center}
\includegraphics[width=7cm, height=7cm]{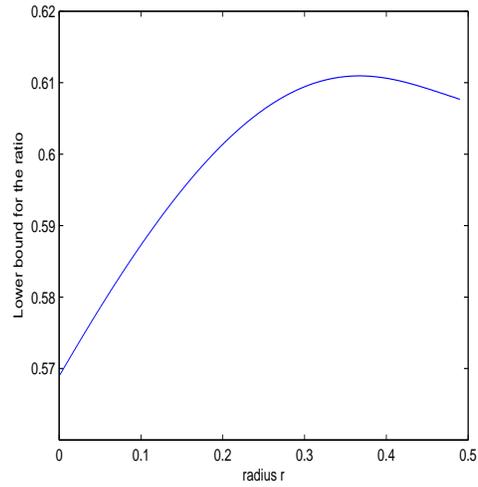}
\caption{Graph of the ratio $\gamma(E \cup F)/(\gamma(E)+\gamma(F))$ for Example \ref{ex8}}
\end{center}
\end{figure}

\end{example}

\newpage

\subsection{Proof of the conjecture in the case $n=2$}
We end this section by giving a proof of the conjecture in the simplest case.

\begin{theorem}
\label{theodec}
Let $E$ and $F$ be disjoint closed disks of radius $r$. Then
$$\frac{\gamma(E \cup F)}{\gamma(E)+\gamma(F)}$$
is a decreasing function of $r$.
\end{theorem}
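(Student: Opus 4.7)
The plan is to apply Example~\ref{ex1}'s explicit theta-function formula for $\gamma(E\cup F)$ to reduce the theorem to monotonicity of a one-variable function, and then to establish that monotonicity by a direct logarithmic-derivative computation.

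By translation (and rotation) I may assume $E=\overline{\mathbb{D}}(-c,r)$ and $F=\overline{\mathbb{D}}(c,r)$ with $0<r<c$. Since $\gamma(E)+\gamma(F)=2r$, and Example~\ref{ex1} gives $\gamma(E\cup F)=\sqrt{c^{2}-r^{2}}\,\vartheta_2(q)^{2}$ with the nome $q\in(0,1)$ determined by $c/r=\tfrac12(\sqrt q+1/\sqrt q)$, the substitution $q=e^{-2\theta}$ turns that last relation into $c/r=\cosh\theta$ and $\sqrt{c^{2}-r^{2}}/r=\sinh\theta$, so that
\[
\frac{\gamma(E\cup F)}{\gamma(E)+\gamma(F)}
=\frac{\sinh\theta}{2}\,\vartheta_2(q)^{2}
=\frac{(1-q)\,\vartheta_2(q)^{2}}{4\sqrt{q}}=:R(q).
\]
Because $r\mapsto q$ is a strictly increasing bijection $(0,c)\to(0,1)$, the theorem reduces to showing that $R$ is strictly decreasing on $(0,1)$.

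Using the Jacobi triple product I rewrite $R(q)=(1-q)\prod_{n\ge 1}(1-q^{2n})^{2}(1+q^{2n})^{4}$. Taking the logarithmic derivative and using the partial-fraction identity $(1-3q^{2n})/(1-q^{4n})=-1/(1-q^{2n})+2/(1+q^{2n})$ reduces the claim $R'(q)<0$ to the single analytic inequality
\[
\sum_{n=1}^{\infty}\frac{4n q^{2n-1}(1-3q^{2n})}{1-q^{4n}} \;<\; \frac{1}{1-q}\qquad(0<q<1). \quad(\star)
\]
Via the Lambert series $\sum_n nx^{n}/(1-x^{n})=\sum_{k}\sigma(k)x^{k}$, $(\star)$ reads $(4/q)\bigl(F(q^{2})-4F(q^{4})\bigr)<1/(1-q)$ with $F(x)=\sum\sigma(k)x^{k}$; equivalently, using the classical parametrization $q=e^{-\pi K'/K}$ together with $\vartheta_2(q)^{2}=2kK(k)/\pi$, it reads $4E(k)K(k)\tanh\bigl(\pi K'(k)/(2K(k))\bigr)<\pi^{2}$ for $0<k<1$.

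The verification of $(\star)$ is where I expect the real work. The inequality is tight: both sides of the elliptic-integral form approach $\pi^{2}$ as $k\to 0^{+}$ and as $k\to 1^{-}$ (the latter requiring Legendre's relation $EK'+E'K-KK'=\pi/2$), so no crude termwise bound on $(\star)$ can succeed---indeed a direct series-coefficient comparison fails already at $q^{2}$. My strategy is to exploit the sign change of the factor $1-3q^{2n}$ at $n\approx\log 2/(-2\log q)$ by splitting the Lambert-series form at this threshold and combining the negative contribution from small $n$ with a sharp estimate for the positive tail; failing this, I would work instead with the elliptic-integral reformulation and show that $\pi^{2}-4E(k)K(k)\tanh(\pi K'(k)/(2K(k)))>0$ throughout $(0,1)$ via explicit endpoint expansions (using $K(k)\sim\log(4/k')$ and the leading behaviour of $E$) glued together by a monotonicity argument supplied by Legendre's relation.
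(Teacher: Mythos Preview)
Your reduction is correct and coincides with the paper's setup: normalizing to $E=\overline{\mathbb D}(-c,r)$, $F=\overline{\mathbb D}(c,r)$, applying the theta-function formula of Example~\ref{ex1}, and observing that $r\mapsto q$ is an increasing bijection $(0,c)\to(0,1)$ leads to exactly the function
\[
f(q)=\tfrac14\bigl(q^{-1/2}-q^{1/2}\bigr)\vartheta_2(q)^2=(1-q)\prod_{n\ge1}(1-q^{2n})^2(1+q^{2n})^4,
\]
whose monotonicity is the whole problem. Your logarithmic-derivative identity and the resulting inequality~$(\star)$ are also correct; they are equivalent to the paper's expression $u(q)=qf'(q)/f(q)<0$.

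The gap is that you stop precisely where the paper's real argument begins. You correctly diagnose that $(\star)$ is tight at both endpoints (so termwise comparison cannot work), but neither of your two proposed strategies is carried out, and there is no indication that the ``split at the sign change of $1-3q^{2n}$'' idea can be made to close near $q=1$; the positive tail and the negative head are both of the same order as $q\to1^-$, and you give no mechanism for the required cancellation. The elliptic-integral reformulation $4E(k)K(k)\tanh\bigl(\pi K'(k)/(2K(k))\bigr)<\pi^2$ is a restatement, not a route to a proof: you still face a delicate double limit, and ``explicit endpoint expansions glued together by a monotonicity argument supplied by Legendre's relation'' is a wish, not a step.

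The paper resolves exactly this difficulty with an idea missing from your plan: the Jacobi modular identity $\vartheta_2(e^{-\pi/x})=\sqrt{x}\,\vartheta_4(e^{-\pi x})$. Under $q=e^{-\pi/x}$ one gets
\[
f(q)=\tfrac12\,x\sinh\!\Bigl(\frac{\pi}{2x}\Bigr)\,\vartheta_4(e^{-\pi x})^2,
\]
and for $q$ close to $1$ (equivalently $x$ large) the factor $\vartheta_4(e^{-\pi x})^2$ is exponentially close to $1$, while $x\mapsto\tfrac12 x\sinh(\pi/2x)$ is visibly decreasing. This converts the delicate $q\to1^-$ regime into a trivial one, at the price of elementary error bounds. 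The paper then handles $q\le0.8$ by your logarithmic-derivative series with a finite truncation whose negativity is checked by a routine symbolic computation. Until you either execute one of your strategies in full or import the modular transformation, the proof is incomplete at its hardest point.
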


\begin{proof}
The main ideas of the proof that follows were suggested to us by Juan Arias de Reyna, and we gratefully acknowledge his contribution.

Without loss of generality, we can suppose that $E$ and $F$ are centered at $c$ and $-c$ respectively, where $c>0$ and $0<r<c$. In this case, we have, by formula (\ref{eqmurai}) of Example \ref{ex1},
$$
\gamma(E \cup F)
=\frac{r}{2} \Bigl( \frac{1}{\sqrt{q}}-\sqrt{q} \Bigr) \vartheta_2(q)^2,
$$
where $q$ is the solution in $(0,1)$ of the equation
\begin{equation}
\label{eqq2}
\frac{c}{r} = \frac{1}{2} \Bigl( \frac{1}{\sqrt{q}}+\sqrt{q} \Bigr).
\end{equation}
Recall that the Jacobi theta-functions are defined by
\begin{align*}
\label{eqtheta}
\vartheta_2(q)&:= \sum_{n \in \mathbb{Z}} q^{(n+1/2)^2} = 2q^{1/4} \prod_{n=1}^{\infty} (1-q^{2n})(1+q^{2n})^2,\\
\vartheta_3(q)&:= \sum_{n \in \mathbb{Z}} q^{n^2} = \prod_{n=1}^{\infty} (1-q^{2n})(1+q^{2n-1})^2,\\
\vartheta_4(q)&:= \sum_{n \in \mathbb{Z}} (-1)^n q^{n^2} = \prod_{n=1}^{\infty} (1-q^{2n})(1-q^{2n-1})^2,
\end{align*}
for $q \in (0,1)$.

Now, since $\gamma(E)=\gamma(F)=r$, we have that
$$
\frac{\gamma(E \cup F)}{\gamma(E) + \gamma(F)}
= \frac{1}{4} \Bigl( \frac{1}{\sqrt{q}}-\sqrt{q} \Bigr) \vartheta_2(q)^2:=f(q).
$$
By equation (\ref{eqq2}), $q$ increases from $0$ to $1$ as $r$ increases from $0$ to $c$. It thus suffices to show that $f(q)$, defined above, is a decreasing function of $q \in (0,1)$. Proving this directly seems difficult, mainly because of the difference in the behavior of $f$ near $1$, and away from $1$. For this reason, we separate the proof in two cases:
\bigskip

\textbf{Case 1: $q \in (0, 0.8]$}

The idea in this case is to express $f$ as a Jacobi product and then compute the logarithmic derivative. Using the product expression for $\vartheta_2(q)$, we obtain
\begin{equation*}
f(q)= (1-q) \prod_{n=1}^{\infty} (1-q^{2n})^2 (1+q^{2n})^4 =(1-q) \prod_{n=1}^{\infty} (1-q^{4n})^2 (1+q^{2n})^2
\end{equation*}
Since $f$ is positive on $(0,1)$, proving that $f$ is decreasing in this interval is equivalent to proving that its logarithmic derivative is negative. Computing the logarithmic derivative and multiplying by $q$, we obtain the function
$$
u(q):= q \frac{f'(q)}{f(q)}=-\frac{q}{1-q} - \sum_{n=1}^{\infty} \frac{8nq^{4n}}{1-q^{4n}} + \sum_{n=1}^{\infty}\frac{4nq^{2n}}{1+q^{2n}}.
$$

We now estimate $u$. Fix $k \geq 2$. We have
$$u(q) \leq -\frac{q}{1-q} - \sum_{n=1}^{k-1} \frac{8nq^{4n}}{1-q^{4n}} + \sum_{n=1}^{k-1}\frac{4nq^{2n}}{1+q^{2n}} -\sum_{n=k}^{\infty} 8nq^{4n} + \sum_{n=k}^{\infty} 4nq^{2n},$$
by splitting the sum between $n \leq k-1$ and $n \geq k$, and using the inequalities
$$
\frac{8nq^{4n}}{1-q^{4n}} > 8nq^{4n}
$$
and
$$
\frac{4nq^{2n}}{1+q^{2n}} < 4nq^{2n}.
$$
Evaluating the two infinite series, we obtain the following upper bound for $u(q)$:
$$
-\frac{q}{1-q} - \sum_{n=1}^{k-1} \frac{8nq^{4n}}{1-q^{4n}} + \sum_{n=1}^{k-1}\frac{4nq^{2n}}{1+q^{2n}} + \frac{8(k-1)q^{4(k+1)}-8kq^{4k}}{(1-q^4)^2} + \frac{4kq^{2k}-4(k-1)q^{2(k+1)}}{(1-q^2)^2}.
$$
Using \textsc{maple}, we can substitute different values of $k$ and solve where the resulting expression is negative.
With $k=10$, we obtain that the above expression is negative for $q \leq 0.81121$.
In particular, $f$ is decreasing in the interval $(0,0.8]$.
\bigskip

\textbf{Case 2: $q \in (0.8,1)$}

In this case, we shall make another change of variable, using the modularity of the theta-functions.
The Jacobi modular identity for theta-functions implies that
$$
\vartheta_2(e^{-\pi/x}) = \sqrt{x} \vartheta_4(e^{-\pi x}).
$$
Making the change of variable $q=e^{-\pi/x}$, we get
$$
f(q) = \frac{1}{4} (q^{-1/2}-q^{1/2}) \vartheta_2(q)^2
= \frac{1}{2}x \Bigl(\sinh{\frac{\pi}{2x}}\Bigr) \vartheta_4(e^{-\pi x})^2.
$$
Note that $x$ increases from $0$ to $\infty$ as $q$ increases from $0$ to $1$.
Furthermore, if $0.8<q<1$, then $x>\pi/(\log{5/4}) \approx 14.0788$.
It thus suffices to prove that $f(x)$ is decreasing for $x>14$.
Write $f(x)=g(x)h(x)$, where
$$
g(x):=\frac{1}{2}x \sinh{\frac{\pi}{2x}}
$$
and
$$
h(x):=\vartheta_4(e^{-\pi x})^2.
$$
It is easy to prove that $g$ is decreasing on $(0,\infty)$. Indeed, $g'(x)\leq0$ is equivalent to
$$
\tanh{\frac{\pi}{2x}} \leq \frac{\pi}{2x},
$$
which is true since $\tanh(\theta) \leq \theta$ for $\theta \geq 0$.

Now, we shall use the fact that $h(x) \approx 1$ for $x>14$ to deduce that in this case, the behavior of $f$ and $g$ are nearly the same. We have to do some numerical error analysis:

First, we need to estimate how close $h(x)$ is to $1$ when $x>14$. Note that
\begin{align*}
\vartheta_4(q) &= 1-2q+2q^4-2q^9+ \dots\\
&\geq 1-2q-2q^2-2q^3-\dots\\
&= 1- \frac{2q}{1-q}.
\end{align*}
Hence, for small $q$,
$$
\vartheta_4(q)^2 \geq 1+4 \frac{q^2}{(1-q)^2} -  4\frac{q}{1-q} \geq 1-4 \frac{q}{1-q}.
$$
Thus, with $q=e^{-\pi x}$,
$$
1-\vartheta_4(q)^2 = 1- \vartheta_4(e^{-\pi x})^2 \leq 4 \frac{e^{-\pi x}}{1-e^{-\pi x}} \leq 8 e^{-\pi x},
$$
since $1-e^{-\pi x} \geq 1/2$ for $x>14$.
Define $\xi(x):=1- \vartheta_4(e^{-\pi x})^2$, so that
\begin{equation}
\label{eq4}
0<\xi(x)\leq 8 e^{-\pi x},
\end{equation} for $x>14$.

We shall also need an estimate for the derivative of $g$:
$$
-g'(x) = -\frac{1}{2} \Bigl(\sinh{\frac{\pi}{2x}} - \frac{\pi}{2x}\cosh{\frac{\pi}{2x}} \Bigr)
= \frac{1}{2}\cosh{\frac{\pi}{2x}} \Bigl( -\tanh{\frac{\pi}{2x}} + \frac{\pi}{2x} \Bigr).
$$
Now, since $x > 14$, we have
$$
\frac{1}{2}\cosh{\frac{\pi}{2x}} \leq \frac{1}{2}\cosh{\frac{\pi}{28}} \approx 0.50315 <1.
$$
Also, for $\theta \geq 0$, $\theta - \tanh{\theta} \leq \theta^3/3$.
Indeed, both functions are $0$ at $0$ and if we compare the derivatives, we get
$$
\tanh^2{\theta}\leq \theta^2,
$$
which holds for every $\theta \geq 0$.
We thus obtain the following estimate for the derivative of $g$:
\begin{equation}
\label{eq5}
-g'(x) \leq \frac{1}{3} \Bigl(\frac{\pi}{2x} \Bigr)^3 = \frac{\pi^3}{24x^3} \leq \frac{\pi^3}{24(14^3)} \leq 1.
\end{equation}

Now we estimate $g(x)$. This is easy; $g$ is decreasing, so for $x > 14$, we have
\begin{equation}
\label{eq6}
g(x) \leq g(14) = \frac{1}{2} (14) \sinh{\frac{\pi}{28}} \approx 0.78705<1.
\end{equation}

We also have the easy estimate
\begin{equation}
\label{eq7}
\vartheta_4(e^{-\pi x}) \leq 1.
\end{equation}

Finally, we estimate the derivative of $\vartheta_4$:
\begin{align*}
\vartheta_4'(q) &= 2(-1+4q^3-9q^8+16q^{15}-\dots) \\
&\geq 2(-1-\sum_{n=1}^{\infty}(n+1)q^n)\\
&= -2 -4 \frac{q}{(1-q)^2} +2 \frac{q^2}{(1-q)^2}\\
& \geq -2 -4\frac{q}{(1-q)^2}.
\end{align*}
Hence,
\begin{equation}
\label{eq8}
-\vartheta_4'(e^{-\pi x}) \leq 2 +4 \frac{e^{-\pi x}}{(1-e^{-\pi x})^2} \leq 2+16e^{-\pi x} \leq 3,
\end{equation}
again for $x>14$.

We now have everything we need to estimate $f'(x)$:
\begin{align*}
f'(x) &= g'(x)h(x) + g(x)h'(x)\\
&= g'(x) - g'(x)\xi(x) - 2 \pi g(x) \vartheta_4(e^{-\pi x}) \vartheta_4'(e^{-\pi x}) e^{-\pi x}.
\end{align*}
Hence, by equations (\ref{eq4}) to (\ref{eq8}):
\begin{align*}
f'(x) &\leq g'(x) + 8 e^{-\pi x} + 2\pi(1)(1)(3)e^{-\pi x} \\
&= g'(x)+8 e^{-\pi x} + 6\pi e^{-\pi x}\\
&< g'(x)+27 e^{-\pi x},
\end{align*}
and this holds for all $x>14$.

To complete the proof, all that remains is to prove that this last expression is negative for $x>14$.
We proceed as follows:
\begin{align*}
g'(x)+27e^{-\pi x}
&= \frac{1}{2} \cosh{\frac{\pi}{2x}} \left( \tanh{ \frac{\pi}{2x}}-\frac{\pi}{2x} \right)+27e^{-\pi x}\\
&\leq  \frac{1}{2}\left( \tanh{ \frac{\pi}{2x}}-\frac{\pi}{2x} \right)+27e^{-\pi x}.
\end{align*}
However, we have the inequality
\begin{equation}
\label{eqtanh}
\tanh{\theta} \leq \theta - \frac{\theta^3}{3} + \theta^5 \qquad (\theta \geq 0),
\end{equation}
which follows from Taylor's theorem. Indeed, first note that
$$
\frac{d^5(\tanh{y})}{dy^5}
= -88 \operatorname{sech}^4y \tanh^2{y}+16\operatorname{sech}^6y+16\tanh^4{y}\operatorname{sech}^2y.
$$
Now, since $\tanh{y} \leq 1$ and $\operatorname{sech}y \leq 1$,
the remainder in Taylor's theorem for $\tanh{\theta}$ is less than
$(32\theta^5)/5! < \theta^5$, and inequality (\ref{eqtanh}) follows.
With $\theta=\pi/(2x)$, we obtain
\begin{align*}
g'(x)+27e^{-\pi x}
&\leq \frac{1}{2} \left(\frac{\pi}{2x} - \frac{1}{3}\left(\frac{\pi}{2x}\right)^3 +  \left(\frac{\pi}{2x}\right)^5-\frac{\pi}{2x}\right)+27e^{-\pi x}\\
&= 27e^{-\pi x} - \frac{\pi^3}{48}\frac{1}{x^3}+\frac{\pi^5}{64}\frac{1}{x^5}.
\end{align*}
Clearly the last expression is negative for $x \geq 14$, and thus we are done.
\end{proof}

\acknowledgment{The authors thank Vladimir Andrievskii, Juan Arias de Reyna, Dmitry Khavinson, Tony O'Farrell and Nikos Stylianopoulos for helpful discussions.}

\bibliographystyle{amsplain}

\end{document}